\theoremstyle{plain}
\newtheorem{theorem}{Theorem}[section]
\newtheorem{lemma}[theorem]{Lemma}
\theoremstyle{definition}
\newtheorem{assumption}[theorem]{Assumption}
\theoremstyle{remark}
\newtheorem{remark}[theorem]{Remark}
\newcommand{\E}{\mathbb{E}}
\icmltitlerunning{Efficient Curvature-Aware Hypergradient Approximation}
\begin{document}

\twocolumn[
\icmltitle{
Efficient Curvature-Aware Hypergradient Approximation for Bilevel Optimization
}



\icmlsetsymbol{equal}{*}

\begin{icmlauthorlist}
\icmlauthor{Youran Dong}{nju}
\icmlauthor{Junfeng Yang}{nju}
\icmlauthor{Wei Yao}{ncams,sustech}
\icmlauthor{Jin Zhang}{sustech,ncams}
\end{icmlauthorlist}

\icmlaffiliation{nju}{School of Mathematics, Nanjing University, Nanjing, China}
\icmlaffiliation{ncams}{National Center for Applied Mathematics Shenzhen, Southern University of Science and Technology, Shenzhen, China}
\icmlaffiliation{sustech}{Department of Mathematics, Southern University of Science and Technology, Shenzhen, China}

\icmlcorrespondingauthor{Jin Zhang}{zhangj9@sustech.edu.cn}

\icmlkeywords{Machine Learning, ICML}

\vskip 0.3in
]



\printAffiliationsAndNotice{}  

\begin{abstract}
Bilevel optimization is a powerful tool for many machine learning problems, such as hyperparameter optimization and meta-learning. Estimating hypergradients (also known as implicit gradients) is crucial for developing gradient-based methods for bilevel optimization. In this work, we propose a computationally efficient technique for incorporating curvature information into the approximation of hypergradients and present a novel algorithmic framework based on the resulting enhanced hypergradient computation. We provide convergence rate guarantees for the proposed framework in both deterministic and stochastic scenarios, particularly showing improved computational complexity over popular gradient-based methods in the deterministic setting. This improvement in complexity arises from a careful exploitation of the hypergradient structure and the inexact Newton method. In addition to the theoretical speedup, numerical experiments demonstrate the significant practical performance benefits of incorporating curvature information.
\end{abstract}

\section{Introduction}

Bilevel optimization has been widely applied to solve enormous machine learning problems, such as hyperparameter optimization \cite{pedregosa2016hyperparameter, franceschi2018bilevel}, meta-learning \cite{franceschi2018bilevel, rajeswaran2019meta, ji2020convergence}, adversarial training \cite{bishop2020optimal, wang2021fast, zhang2022revisiting}, reinforcement learning~\citep{rl19,liu2021investigating}, and neural architecture search \cite{liu2018darts,liang2019darts+}.

Bilevel optimization amounts to solving an optimization problem with a constraint defined by another optimization problem. 
In this work, we focus on the following bilevel optimization problem:
\begin{equation}\label{eq:BO-0}
\begin{aligned}
\min _{x \in \mathbb{R}^m} \Phi(x):=f(x, y^*(x)) \\
\mathrm{s.t.}\  y^*(x)=\underset{y \in \mathbb{R}^n}{\arg \min }\, g(x, y), 
\end{aligned}
\end{equation}
where the upper- and lower-level objective functions $f$ and $g$ are real-valued functions defined on $\mathbb{R}^m \times \mathbb{R}^n$. 
We assume that $g$ is strongly convex \textit{w.r.t} the lower-level variable $y$, which guarantees the uniqueness of the lower-level solution. However, solving this bilevel optimization problem remains challenging, as $y^*(x)$ can typically only be approximated through iterative schemes \cite{pedregosa2016hyperparameter,grazzi2020iteration,dagreou2022framework,chu2024spaba}.

In large-scale scenarios, gradient-based bilevel optimization methods have gained popularity due to their effectiveness \cite{ghadimi2018approximation,ji2021bilevel}. 
When solving the bilevel problem \eqref{eq:BO-0}, it is essential to estimate the hypergradient (also known as implicit gradient), which represents the gradient of $\Phi(x)$. By the implicit function theorem, the hypergradient can be expressed as:
\begin{align}\label{eq:hypergradient}
\nabla \Phi(x) 
= \nabla_1 f(x,y^*(x)) -
\nabla_{12}^2g(x,y^*(x)) u^*(x),
\end{align}
where $u^*(x):=[\nabla_{22}^2g(x,y^*(x))]^{-1} \nabla_2 f(x,y^*(x))$. 
Based on the structure in \eqref{eq:hypergradient}, estimating hypergradients requires solving lower-level problems and computing the Hessian inverse-vector products.
Several studies have emerged to effectively address this challenge, such as those by \citet{ghadimi2018approximation,ji2021bilevel,ji2022will,arbel2022amortized,dagreou2022framework}. 
However, most existing studies primarily focus on estimating the Hessian inverse-vector products, including Neumann series approximations \cite{chen2021closing, hong2023two}, conjugate gradient descent \cite{pedregosa2016hyperparameter, ji2021bilevel, arbel2022amortized}, gradient descent \cite{arbel2022amortized, dagreou2022framework}, and subspace techniques \cite{gao2024lancbio}. 
In approximating $y^*(x^k)$, 
\citet{pedregosa2016hyperparameter} employs the L-BFGS method to solve the lower-level problem up to a specified tolerance. Recently, two commonly used approaches have emerged. The first approach involves performing a single (stochastic) gradient descent step \cite{ji2022will, dagreou2022framework, hong2023two}. The second approach entails executing multiple (stochastic) gradient descent steps \cite{ghadimi2018approximation, chen2021closing, ji2021bilevel, ji2022will, arbel2022amortized}.

From the above, it is clear that solving lower-level problems and computing Hessian inverse-vector products are typically treated as separate tasks. 
However, it is important to note that the Hessian $\nabla_{22}^2 g$ in Hessian inverse-vector products originates from the lower-level objective function. 
Hence, hypergradient approximation has an \textit{intrinsic structure} in which solving lower-level problems and computing Hessian inverse-vector products share the same Hessian. 
This intuitive structure just described has been demonstrated in \citet{ramzi2022shine}, which introduces SHINE, a novel method that solves the lower-level problem using quasi-Newton (qN) methods and employs the associated qN matrices, along with refinement strategies, to compute Hessian inverse-vector products.

However, the theoretical analysis of SHINE is hindered by the mixing of quasi-Newton recursion schemes and complex refinement strategies. As a result, \citet{ramzi2022shine} focuses on the asymptotic convergence analysis of hypergradient approximation in a deterministic setting. The convergence rate and computational complexity are lacking.
Therefore, the intuitive benefit of the intrinsic hypergradient structure has not been fully realized in \citet{ramzi2022shine} or in the existing literature.

\subsection{Contributions}

This paper aims to explore and exploit the benefits of leveraging the hypergradient structure. 
Our contributions are summarized below.
\begin{itemize}
\item 
We propose a simple Newton-based framework, NBO, for bilevel optimization that integrates solving the lower-level problems with computing Hessian inverse-vector products.   
This framework is built on a new curvature-aware hypergradient approximation, which utilizes the hypergradient structure and inexact Newton methods. 
When each subproblem in NBO is approximately solved using a single gradient descent step initialized at zero, NBO simplifies to the well-known single-loop algorithm framework presented in \citet{dagreou2022framework}.

\item 
We establish the convergence rate and computational complexity for two specific instances of NBO in both deterministic and stochastic scenarios. In particular, we demonstrate improved computational complexity compared to popular gradient-based methods in the deterministic setting. 

\item We conduct numerical experiments to compare the proposed algorithms with popular gradient-based methods, demonstrating the significant practical performance benefits of incorporating curvature information.
\end{itemize}

\subsection{Notation}
We refer to the optimal value of problem \eqref{eq:BO-0} as $\Phi^*$. The gradient of $g$ \textit{w.r.t} the variables $x$ and $y$ are denoted by $\nabla_1g(x,y)$ and $\nabla_2g(x,y)$, respectively. 
The Jacobian matrix of  $\nabla_1g$ and the Hessian matrix of $g$ \textit{w.r.t} $y$ are denoted by $\nabla^2_{12}g(x,y)$ and $\nabla^2_{22}g(x,y)$, respectively. 
Unless otherwise specified, the notation $\|\cdot\|$ denotes the $\ell_2$ norm for vectors and the Frobenius norm for matrices. Furthermore, the operator norm of a matrix $Z$ is denoted by $\|Z\|_{\textrm{op}}$.

\section{Curvature-Aware Bilevel Optimization Framework}


\subsection{Hypergradient Approximation}

The hypergradient given by \eqref{eq:hypergradient} is intractable in practice because it requires knowing $y^*(x)$. 
To address this, the seminal work \cite{ghadimi2018approximation} approximates the hypergradient by 
\begin{equation*}
    \widehat{\nabla}\Phi(x,y)
    :=\nabla_1 f(x,y) -
\nabla_{12}^2g(x,y) u^*(x,y),
\end{equation*}
where $y$ is an approximation of $y^*(x)$ and 
$u^*(x,y):=[\nabla_{22}^2g(x,y)]^{-1} \nabla_2 f(x,y)$.  
Under appropriate hypotheses, Lemma 2.2 of \citet{ghadimi2018approximation} provides an error bound:
\begin{equation}\label{hypergradientapproximation}
\| \widehat{\nabla}\Phi(x,y) 
- \nabla\Phi(x) \|
\leq C\|y^*(x)-y\|,
\end{equation}
where $C$ is a constant. 
Observe that $\nabla_{22}^2 g$ in $u^*(x, y)$ is the Hessian of $g$. 
Hypergradient approximation has a structure in which solving lower-level problems and computing Hessian inverse-vector products share the same Hessian.

To exploit the hypergradient structure, it is natural to utilize the Hessian $\nabla_{22}^2 g$, which provides curvature information, in solving the lower-level problem. The canonical second-order optimization scheme for this purpose is Newton’s method. However, Newton’s method is impractical for large-scale problems; thus, we instead consider inexact Newton methods and propose a new technique for hypergradient approximation, consisting of two steps:
\begin{enumerate}[label=(\roman*)]
	\item Given $x$ and $y$, compute an inexact solution $(v,u)$ of the linear system:
    \begin{equation}\label{directions}
        [\nabla_{22}^2g(x,y)](v, u)=(\nabla_2 g(x,y), \nabla_2f(x,y));
    \end{equation}
	\item Compute the approximated hypergradient: 
\begin{equation}\label{dx-0}
d_x:=
\nabla_1 f(x,y-v) -
\nabla_{12}^2g(x,y-v) u.
\end{equation}
\end{enumerate}
One can easily observe that $v$ is an inexact approximation of the Newton direction $v^*(x,y):=[\nabla_{22}^2g(x,y)]^{-1} \nabla_2 g(x,y)$, just as $u$ is an inexact approximation of $u^*(x,y)$. 

The motivation is twofold: 
(1) Denote $y^+:=y-[\nabla_{22}^2g(x,y)]^{-1} \nabla_2 g(x,y)$ as a single Newton step. 
Since $\nabla_2 g(x, y^*(x))=0$, and assuming that $\nabla_{22}^2 g(x,\cdot)$ is $L_{g,2}$-Lipschitz continuous, by Lemma 1.2.4 in \citet{nesterov2018lectures}, 
\begin{align}
    &\left\| y-[\nabla_{22}^2g(x,y)]^{-1} \nabla_2 g(x,y)
    -y^*(x) \right\| \nonumber\\
    \leq& \frac{1}{\mu}
    \left\| \nabla_2 g(x, y^*)
    -\nabla_2 g(x,y)
    -[\nabla_{22}^2g(x,y)](y^*-y)
    \right\| \nonumber\\
    \leq&\frac{L_{g,2}}{2\mu}\|y^*(x)-y\|^2, \label{eq:quadratic}
\end{align}
where we denote $y^*:=y^*(x)$ to save space when there is no ambiguity. Hence, we obtain: 
\begin{equation}\label{hypergradientapproximation-Newton}
\| \widehat{\nabla}\Phi(x,y^+) 
- \nabla\Phi(x) \|
\leq \frac{C L_{g,2}}{2\mu}\|y^*(x)-y\|^2.
\end{equation}
This inequality shows that a single classical Newton step can accelerate the hypergradient estimation, leading to a quadratic decay. 
(2) Observe that $v^*(x,y)$ and $u^*(x,y)$ share the same Hessian inverse.

\subsection{Description of the Algorithmic Framework}

Building on the new hypergradient approximation, we introduce our algorithmic framework for solving the bilevel optimization problem in \eqref{eq:BO-0}.

First, motivated by \citet{arbel2022amortized,dagreou2022framework}, solving the linear system \eqref{directions} at $(x^k, y^k)$ is reformulated as minimizing the following two quadratic problems, which share the same Hessian $H_k:= \nabla_{22}^2 g (x^k,y^k)$:
\begin{align}
&\min_v \frac{1}{2} \langle \nabla_{22}^2 g (x^k,y^k) v , v\rangle - \langle\nabla_2 g(x^k,y^k), v \rangle, \label{eq:subproblem-v}\\
&\min_u \frac{1}{2} \langle \nabla_{22}^2 g (x^k,y^k) u , u\rangle - \langle\nabla_2 f(x^k,y^k), u \rangle. \label{eq:subprolem-u}
\end{align}

Second, using a warm-start procedure to initialize the solver for $u$, we write $u=u^k-w$, {where $u^k$ is the current iterate of $u$.} Then $w$ minimizes the following quadratic function:
\begin{equation}
   \frac{1}{2} \| w\|_{H_k}^2 
    - 
    \langle
    \nabla_{22}^2 g (x^k,y^k) u^k-\nabla_2 f(x^k,y^k), w \rangle,
    \label{eq:subprolem-w}
\end{equation}
where $ \|w\|^2_{H_k} :=  \langle H_k w, w \rangle $. 
Since computing the exact minimizers may be computationally demanding, we instead seek inexact solutions. We denote these inexact solutions by $v^k$ and $w^k$ for the quadratic problems \eqref{eq:subproblem-v} and \eqref{eq:subprolem-w}.

Third, we compute the approximated hypergradient and update $x$ using an inexact hypergradient descent step. The overall procedure is summarized in Algorithm \ref{alg:NBO}. 
Here, slightly different from \eqref{dx-0}, we use $y^k$ and $u^k$ instead of $y^{k+1}$ and $u^{k+1}$ in $d_x^k$ to facilitate potential parallel computation. 
We can also update these variables in an alternating manner.

\begin{algorithm}[tb]
	\caption{Newton-based framework for Bilevel Optimization (NBO)}
	\begin{algorithmic}[1]
		\STATE \textbf{Input:} Initialize $y^0, u^0, x^0$; step size $\alpha_k$.
		\FOR{$k = 0, 1, \cdots, K-1$}
		\STATE Compute inexact Newton directions $v^k$, $w^k$ by minimizing the quadratic functions in \eqref{eq:subproblem-v} and \eqref{eq:subprolem-w}.
        \STATE Update $y^{k+1}$ and $u^{k+1}$:  
        \begin{align*}
        y^{k+1}=y^k- v^k;  \quad u^{k+1}=u^k- w^k.
        \end{align*}
        \STATE Compute the approximated hypergradient:
        \begin{equation*}
            d_x^k = \nabla_1 f ( x^k, y^{k})-\nabla_{12}^2 g (x^k, y^{k}) u^{k}.
        \end{equation*}
        \STATE Update $x^{k+1}$:
        \begin{align*}
        x^{k+1}=x^k-\alpha_k d_x^k.
        \end{align*}
		\ENDFOR
	\end{algorithmic}
	\label{alg:NBO}
\end{algorithm}

Since the subproblems are strongly convex quadratic programming problems, the proposed NBO has the potential to adapt to stochastic scenarios and be implemented using various techniques from both deterministic and stochastic optimization, similar to the frameworks outlined in \citet{arbel2022amortized, dagreou2022framework}. In the following, we study two specific examples.

\subsection{Examples} \label{sec:example}

\paragraph{Deterministic Setting: the NBO-GD algorithm.}

In the first example, we compute inexact Newton directions in NBO using gradient descent (GD) steps with a pre-defined number of iterations $T+1$. 
The resulting algorithm, referred to as NBO-GD, is detailed in Algorithm \ref{alg:NBOGD}, with its subroutine $\text{GD}(x^k,y^k,u^k;T)$ outlined in Algorithm \ref{alg:GD}, where
\begin{align*}
		& d_y^k:= \nabla_2 g(x^k,y^k),\\
		& d_u^k := \nabla_{22}^2 g (x^k,y^k) u^k - \nabla_2 f(x^k,y^k).
\end{align*}

\begin{algorithm}[tb]
	\caption{NBO-GD}
	\begin{algorithmic}[1]
		\STATE \textbf{Input:} Initialize $y^0, u^0, x^0$; number of iterations $K$, $T$; step size $\alpha_k$.
		\FOR{$k = 0, 1, \cdots, K-1$}
        \STATE  Compute inexact Newton directions: 
        \begin{align*}
            v^k, w^k = \text{GD}(x^k,y^k,u^k;T).
		\end{align*}
		\STATE Update $y^{k+1}$ and $u^{k+1}$:
        \begin{align*}
        y^{k+1}=y^k- v^k;  \quad u^{k+1}=u^k- w^k.
        \end{align*}
		\STATE Compute the approximated hypergradient:
        \begin{align*}
         d_x^k = \nabla_1 f ( x^k, y^k)-\nabla_{12}^2 g (x^k, y^k) u^k.
        \end{align*}
        \STATE Update $x^{k+1}$:
        \begin{align*}
        x^{k+1}=x^k-\alpha_k d_x^k. 
        \end{align*}
		\ENDFOR
	\end{algorithmic}
	\label{alg:NBOGD}
\end{algorithm}

\begin{algorithm}[tb]
	\caption{$\text{GD}(x^k,y^k,u^k;T)$}
	\begin{algorithmic}[1]
		\STATE Initialize $v^{-1,k}=0$ and $w^{-1,k}=0$; step size $\gamma_k$.
		\FOR {$t = -1, 0, \cdots, T-1$}
		\STATE Update
		\begin{align}
		& [v^{t+1, k}, w^{t+1, k} ] \label{eq:v-and-w-update}\\
	     = & [
		 I-\gamma_k \nabla_{22}^2 g(x^k, y^k) ] \big[v^{t, k}, w^{t, k}\big] + \gamma_k  [d_y^k,d_u^k ]. \nonumber
		\end{align}
		\ENDFOR
        \STATE \textbf{Return} $v^k = v^{T,k}$, $w^k = w^{T,k}$.
	\end{algorithmic}
	\label{alg:GD}
\end{algorithm}

\begin{remark} \label{rmk:deter}
(i) When $T=0$, that is, when each subproblem in NBO is approximately solved using a single GD step initialized at zero, NBO-GD reduces to the single-loop algorithm framework in \citet{dagreou2022framework}.

(ii) The subproblems in NBO can also be approximately solved using the conjugate gradient method \cite{nocedal2006conjugate}, gradient descent with a Chebyshev step size \cite{david1953richardson}, or subspace techniques \cite{gao2024lancbio}.

(iii) It is worth noting that for a fixed $k$, the subroutine $\text{GD}(x^k,y^k,u^k;T)$ does not involve gradient computations but relies solely on Hessian-vector product computations, sharing the same Hessian $\nabla_{22}^2 g(x^k, y^k)$. 
\end{remark}

\paragraph{Stochastic Setting: the NSBO-SGD algorithm.}

We consider our framework in the stochastic setting, where
\begin{align}
f(x, y)=\mathbb{E}_{\xi}[F(x, y ; \xi)], \ 
g(x, y)=\mathbb{E}_\zeta[G(x, y ; \zeta)]. \label{eq:objective-sto}
\end{align}
For simplicity, we focus on basic stochastic gradient descent (SGD) algorithms. In Algorithm \ref{alg:NSBOSGD}, NSBO-SGD, an adaptation of SGD to the Newton-based framework for Stochastic Bilevel Optimization (NSBO), is presented, with its subroutine $\text{SGD}(x^k,y^k,u^k;T)$ outlined in Algorithm \ref{alg:SGD}. The descent directions $ D_y^k, D_u^k ,D_x^k $ are unbiased estimators of $ d_y^k, d_u^k ,d_x^k $, given by 
\begin{align}
&D_y^k := \nabla_2 G(x^k, y^k ; B_2^k),  \label{eq:direction-y}\\
&D_u^k := \nabla_{22}^2 G (x^k,y^k;B_1^k)u^k - \nabla_2 F(x^k,y^k;B_3^k),\label{eq:direction-u}\\
&D_x^k := \nabla_1 F ( x^k, y^k; B_3^k )-\nabla_{12}^2 G (x^k, y^k; B_4^k ) u^k.\label{eq:direction-x}
\end{align}

\begin{algorithm}[tb]
	\caption{NSBO-SGD}
	\begin{algorithmic}[1]
		\STATE \textbf{Input:} Initialize $y^0, u^0, x^0$; number of iterations $K$, $T$; step size $\alpha_k$. 
		\FOR{$k = 0, 1, \cdots, K-1$}
		\STATE  Compute the inexact subsampled Newton directions: 
		\begin{align*}
		v^k, w^k = \text{SGD}(x^k,y^k,u^k;T).
		\end{align*}
		\STATE Update $y^{k+1}$ and $u^{k+1}$:
		\begin{align*}
		y^{k+1}=y^k- v^k;  \quad u^{k+1}=u^k- w^k.
		\end{align*}
		\STATE Update $x^{k+1}$: 
		\begin{align*}
		x^{k+1}=x^k-\alpha_k D_x^k,
		\end{align*}
        where $ D_x^k $ is unbiased estimator of $d_x^k$ in \eqref{eq:direction-x}.
		\ENDFOR
	\end{algorithmic}
	\label{alg:NSBOSGD}
\end{algorithm}

\begin{algorithm}[tb]
	\caption{$\text{SGD}(x^k,y^k,u^k;T)$}
	\begin{algorithmic}[1]
    \STATE Initialize $v^{-1,k}=0$ and $w^{-1,k}=0$; step size $\gamma_k$.
		\STATE Sample batches $B_1^k$, $B_2^k$, $B_3^k$, and compute $ D_y^k$, $D_u^k$ using \eqref{eq:direction-y} and \eqref{eq:direction-u}, respectively. 
		\FOR {$t = -1, 0, \cdots, T-1$}
		\STATE Sample batch $B_1^{t,k}$ and update 
		\begin{align}
		&[v^{t+1, k}, w^{t+1, k}] \nonumber\\
		= &[
		I-\gamma_k H^{t,k}] [v^{t, k}, w^{t, k}] 
		+ \gamma_k [D_y^k,D_u^k],\label{eq:v-and-w-update-sto}
		\end{align}
        where $H^{t,k}:=\nabla_{22}^2 G (x^k,y^k;B_1^{t,k})$.
		\ENDFOR
		\STATE \textbf{Return} $v^k = v^{T,k}, w^k = w^{T,k}$.
	\end{algorithmic}
	\label{alg:SGD}
\end{algorithm}

\begin{remark} 
Inspired by recent works \cite{dagreou2022framework, chu2024spaba}, a broad class of stochastic gradient estimation techniques, such as SAGA \cite{defazio2014saga}, STORM \cite{cutkosky2019momentum}, and PAGE \cite{li2021page}, can be incorporated into the NSBO framework.
\end{remark}

\section{Convergence Analysis}


\subsection{Assumptions}

Before presenting the theoretical results, we introduce the assumptions that will be used throughout this paper.

\begin{assumption} \label{asp:base} 
	\begin{enumerate}[label=(\alph*)]
		\item For any $x$, $g(x, \cdot)$ is strongly convex with parameter $\mu>0$.
		\item $\nabla g$ is Lipschitz continuous with a Lipschitz constant $L_{g, 1}$, and $ \nabla^2_{22} g $ and $ \nabla^2_{12} g $ are Lipschitz continuous with a Lipschitz constant  $L_{g, 2}$.
		\item $\nabla f$ is Lipschitz continuous with a Lipschitz constant $L_{f, 1}$, and there exists a constant $L_{f,0}$ such that $\|\nabla_2 f (x,y^*(x))\|\leq L_{f,0}$ for all $x$.
        \item There exists a constant $C_{f,0}$ such that $\|\nabla_1 f (x,y)\|\leq C_{f,0}$ for all $x$ and $y$.
	\end{enumerate}
\end{assumption}

Assumptions \ref{asp:base}(a)-(c) are standard in the bilevel optimization literature \cite{ghadimi2018approximation, chen2021closing, khanduri2021near, arbel2022amortized, dagreou2022framework, ji2022will, hong2023two, chu2024spaba}. 
Under Assumptions \ref{asp:base}(a)-(c), the hypergradient $\nabla\Phi$ is Lipschitz continuous with a constant given by: 
\begin{align}
&L_{\Phi}:=L_{f, 1}+\frac{2 L_{f, 1} L_{g, 1}+L_{g, 2} L_{f, 0}^2}{\mu} \label{eq:def-constants}\\
&\quad+\frac{2 L_{g, 1} L_{f, 0} L_{g, 2}+L_{g, 1}^2 L_{f, 1}}{\mu^2}+\frac{L_{g, 2} L_{g, 1}^2 L_{f, 0}}{\mu^3}, \nonumber
\end{align}
as established in Lemma 2.2 of \citet{ghadimi2018approximation}. 
As in \citet{ji2021bilevel, ji2022will}, we define $L:= \max\{L_{g,1}, L_{f,1}\}$ and $\kappa=L/\mu$. Then $L_{\Phi}=O(\kappa^3)$. 

Note that Assumption \ref{asp:base}(d) is not employed in the aforementioned literature but has been adopted in \citet{ji2021bilevel, liu2022bome, kwon2023fully, gao2024lancbio} for their respective purposes. 
In this work, this assumption is utilized to ensure that all iteration points $y^k$ remain within a predefined neighborhood of $y^*(x^k)$, provided that the initial point $y^0$ lies within a predefined neighborhood of $y^*(x^0)$. 
This condition aligns with the local quadratic convergence rate of Newton’s method. 

\subsection{Convergence Analysis for NBO-GD}

Under the above assumptions, we establish the convergence properties of NBO-GD. The detailed proof is provided in Appendix \ref{app:thmdeter}. {
First, we define BOX 1 to represent the set of initial points satisfying $ \|y^0 - y^*(x^0)\| \leq \min \big\{\frac{\mu}{2 L_{g,2}}, \frac{1}{2 \sqrt{L_1}}\big\} $ and $ \|u^0 - u^*(x^0)\| \leq \min \big\{\frac{5 L_1}{2 L_{g,2}}, \frac{\sqrt{L_1}}{\mu}\big\}$, where $L_1:=L_{f,1}+L_{g,2}\frac{L_{f,0}}{\mu}$. }

\begin{table*}[t]
	\renewcommand\arraystretch{2}
	\caption{
    Comparison of the computational complexities of two NBO implementations with state-of-the-art methods {in the deterministic setting}, including AID-BiO \cite{ji2021bilevel}, AmIGO \cite{arbel2022amortized}, No-loop AID \cite{ji2022will}, and F$^{2}$SA \cite{kwon2023fully,chen2023near}. 
    Note that the dependence on $\log \kappa$ is not explicitly stated in AID-BiO and AmIGO, but it can be derived from equation (30) in \citet{ji2021bilevel} and Proposition 10 in \citet{arbel2022amortized}.
	}
	\label{tab:compare}
	\vskip 0.15in
	\begin{center}
		\begin{small}
			\begin{tabular}{cccc}
				\toprule
				Algorithms & Convergence Rate & Gradient Computations  & Hessian-Vector Products \\ \cline{1-4}
				AmIGO-GD \cite{arbel2022amortized} & $ O(\kappa^3\epsilon^{-1}) $  & $ O((\kappa^4 \log \kappa)\epsilon^{-1}) $ & $ O((\kappa^4 \log \kappa)\epsilon^{-1}) $ \\ \cline{1-4}
		      \makecell{AID-BiO \cite{ji2021bilevel}\\AmIGO-CG \cite{arbel2022amortized}} & $ O(\kappa^3\epsilon^{-1}) $  & $ O((\kappa^4 \log \kappa)\epsilon^{-1}) $ & $ O((\kappa^{3.5} \log \kappa)\epsilon^{-1}) $ \\ \cline{1-4}
              No-loop AID \cite{ji2022will} & $ O(\kappa^6\epsilon^{-1}) $  & $ O(\kappa^6\epsilon^{-1} ) $ & $ O(\kappa^6\epsilon^{-1} ) $ \\ \cline{1-4}
              {F$^{2}$SA \cite{kwon2023fully,chen2023near}} & \(O(\kappa^3 \epsilon^{-1})\) & \(O\left(\kappa^4 \epsilon^{-1} \log (\kappa/ \epsilon)\right)\) & / \\ \cline{1-4}
				NBO-GD (this paper) & $ O(\kappa^3\epsilon^{-1}) $  & $ O(\kappa^3 \epsilon^{-1}) $ & $ O(\kappa^4 \epsilon^{-1}) $ \\ \cline{1-4}
		      NBO-CG (this paper)& $ O(\kappa^3\epsilon^{-1}) $  & $ O(\kappa^{3} \epsilon^{-1}) $ & $ O((\kappa^{3.5} \log \kappa)\epsilon^{-1}) $
				\\ \bottomrule
			\end{tabular}
		\end{small}
	\end{center}
	\vskip -0.1in
\end{table*}

\begin{theorem}\label{thm:deter}
	Under Assumption \ref{asp:base}, choose an initial iterate $(y^0, u^0, x^0)$  in BOX 1. 
    Then, for any constant step size $\gamma_k=\gamma\leq 1/L_{g,1}$, 
    there exists a proper constant step size $ \alpha_k = \alpha  = \Theta(\kappa^{-3})$ and $ T \geq \Theta(\kappa) $ such that NBO-GD has the following properties: 
		\begin{enumerate}[label=(\alph*)]
		\item  
        For all integers $ K\geq 1 $, 
        $ \min_{0 \le k \le K-1} \|\nabla \Phi(x^k)\|^2 \leq \frac{2\Phi(x^0) - 2\Phi^* + 4}{\alpha K} = O(\frac{\kappa^3}{K})$. 
        That is, NBO-GD can find an $\epsilon$-optimal solution $\bar{x}$ (i.e., $\|\nabla \Phi(\bar{x})\|^2\leq \epsilon$) in $K=O(\kappa^3 \epsilon^{-1})$ steps.
		\item 
        The computational complexity of NBO-GD is: $ O(\kappa^3/\epsilon) $ gradient computations and Jacobian-vector products, and $ O(\kappa^4 /\epsilon) $ Hessian-vector products. 
	\end{enumerate} 
\end{theorem}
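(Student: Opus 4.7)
The plan is to combine the $L_\Phi$-smoothness descent on $\Phi$ with a careful control of the inexact-Newton errors in $y^k$ and $u^k$, organized into a joint Lyapunov potential. The hard part will be maintaining the invariance of BOX 1 across all outer iterates while simultaneously coupling the inner-loop gradient-descent contraction with the Newton quadratic-convergence bound \eqref{eq:quadratic}, so that the $\kappa$-amplification coming from $\|v_k^*\| \leq O(\kappa)\|y^k - y^*(x^k)\|$ is absorbed.

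First I would derive a hypergradient-approximation inequality
\begin{equation*}
\|d_x^k - \nabla\Phi(x^k)\| \leq C_y \|y^k - y^*(x^k)\| + C_u \|u^k - u^*(x^k)\|,
\end{equation*}
by adding and subtracting the Ghadimi--Wang surrogate $\widehat{\nabla}\Phi(x^k, y^k)$, invoking \eqref{hypergradientapproximation} together with Assumption \ref{asp:base}, and using the Lipschitzness of $u^*(x,\cdot)$ and $u^*$. Next I would analyze the inner loop \eqref{eq:v-and-w-update}: since it is gradient descent on a $\mu$-strongly convex $L_{g,1}$-smooth quadratic with minimizer $v_k^* = H_k^{-1}\nabla_2 g(x^k, y^k)$, for $\gamma \leq 1/L_{g,1}$ the clean identity $v^k - v_k^* = -(I-\gamma H_k)^{T+1} v_k^*$ yields $\|v^k - v_k^*\| \leq (1-\gamma\mu)^{T+1} \|v_k^*\|$, with a symmetric bound for $w^k$. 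Decomposing $y^{k+1} - y^*(x^k) = (y^k - v_k^* - y^*(x^k)) - (v^k - v_k^*)$ and applying \eqref{eq:quadratic} gives the key recursion
\begin{equation*}
\|y^{k+1} - y^*(x^k)\| \leq \tfrac{L_{g,2}}{2\mu}\|y^k - y^*(x^k)\|^2 + (1-\gamma\mu)^{T+1}\|v_k^*\|,
\end{equation*}
and analogously for $u^{k+1}$. Choosing $T = \Theta(\kappa)$ makes the inner factor small enough so that, inside BOX 1, this recursion contracts the $y$- and $u$-errors by a definite factor up to an $O(\alpha\|d_x^k\|)$ drift from $y^*(x^{k+1}) - y^*(x^k)$ and $u^*(x^{k+1}) - u^*(x^k)$ (both Lipschitz in $x$ with constants polynomial in $\kappa$).

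With these pieces in hand, I would introduce a Lyapunov potential
\begin{equation*}
V^k := \Phi(x^k) + A\|y^k - y^*(x^k)\|^2 + B\|u^k - u^*(x^k)\|^2,
\end{equation*}
and choose $\alpha = \Theta(\kappa^{-3})$ together with constants $A, B > 0$ such that the $L_\Phi$-smoothness inequality
\begin{equation*}
\Phi(x^{k+1}) \leq \Phi(x^k) - \tfrac{\alpha}{2}\|\nabla\Phi(x^k)\|^2 + \tfrac{\alpha}{2}\|d_x^k - \nabla\Phi(x^k)\|^2 + \tfrac{L_\Phi\alpha^2}{2}\|d_x^k\|^2,
\end{equation*}
combined with the $y$- and $u$-recursions, yields a per-step Lyapunov decrease $V^{k+1} \leq V^k - \tfrac{\alpha}{4}\|\nabla\Phi(x^k)\|^2$. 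The invariance of BOX 1 is proven by induction in parallel with this decrease, using that the $O(\alpha\|d_x^k\|)$ drift is small when $\alpha = \Theta(\kappa^{-3})$ and that $\|d_x^k\|$ is uniformly bounded via Assumption \ref{asp:base}(d) together with the explicit form of $d_x^k$; this coupled induction controlling the inexact-Newton contraction, the invariance of BOX 1, and the Lyapunov decrease simultaneously is the main obstacle.

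Telescoping the Lyapunov inequality over $k = 0, \ldots, K-1$ and using $V^0 - V^* \leq \Phi(x^0) - \Phi^* + O(1)$ (the $O(1)$ term bounded via the BOX 1 diameter) gives $\min_{0\leq k<K}\|\nabla\Phi(x^k)\|^2 \leq 4(V^0 - V^*)/(\alpha K) = O(\kappa^3/K)$, which is part (a). For part (b), each outer iteration performs $O(1)$ full gradient evaluations of $f$ and $g$, one Jacobian-vector product when forming $d_x^k$, and $T+1 = \Theta(\kappa)$ Hessian-vector products in the inner loop; multiplying by $K = O(\kappa^3/\epsilon)$ yields $O(\kappa^3/\epsilon)$ gradient and Jacobian-vector computations and $O(\kappa^4/\epsilon)$ Hessian-vector products, as claimed.
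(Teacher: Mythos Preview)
Your proposal follows essentially the same route as the paper: hypergradient error bound \eqref{eq:diff-gradient-x}, a one-step inexact Newton recursion for $y$ and $u$, induction on BOX~1 invariance with $\alpha=\Theta(\kappa^{-3})$, and a Lyapunov potential $V_k=\Phi(x^k)+b_y\|y^k-y^*(x^k)\|^2+b_u\|u^k-u^*(x^k)\|^2$ (the paper takes $b_y=4L_1$, $b_u=\mu^2/(8L_1)$) which telescopes to give part~(a), with part~(b) following from the per-iteration oracle count.

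There is one quantitative slip in your inner-loop analysis. From your identity $v^k-v_k^*=-(I-\gamma H_k)^{T+1}v_k^*$ and the naive bound $\|v_k^*\|\leq\frac{L_{g,1}}{\mu}\|y^k-y^*(x^k)\|=\kappa\|y^k-y^*(x^k)\|$ you get a linear term $(1-\gamma\mu)^{T+1}\kappa\,\|y^k-y^*(x^k)\|$. To make this $\leq\tfrac14\|y^k-y^*(x^k)\|$ you would need $T=\Theta(\kappa\log\kappa)$, not $T=\Theta(\kappa)$, which would cost an extra $\log\kappa$ in the Hessian-vector complexity. The paper avoids this by iterating the contraction directly in the quantity $\|y^k-v^{t,k}-y^*(x^k)\|$ (see \eqref{1eq}--\eqref{9eq}): since $v^{-1,k}=0$ the starting value is $\|y^k-y^*(x^k)\|$ with no $\kappa$ factor, and after $T=\Theta(\kappa)$ steps the linear residual is already $\leq\tfrac14\|y^k-y^*(x^k)\|$. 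Your decomposition can be repaired just as easily: once inside BOX~1, the Newton bound \eqref{eq:quadratic} and the triangle inequality give $\|v_k^*\|\leq\|y^k-y^*(x^k)\|+\tfrac{L_{g,2}}{2\mu}\|y^k-y^*(x^k)\|^2\leq\tfrac54\|y^k-y^*(x^k)\|$, which removes the $\kappa$-amplification and restores $T=\Theta(\kappa)$.
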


\begin{remark}\label{rmk:deter}
One can apply the gradient descent (GD) method, as described in Appendix \ref{app:initial}, to select initial points in BOX 1.
This \textit{one-time cost} requires $O(\kappa \log \kappa)$ gradient computations and Hessian-vector products and is included in the total computational complexity.
\end{remark}

\begin{remark} \label{rmk:CG}
If we replace GD with the conjugate gradient method (CG) in line 3 of NBO-GD for solving the subproblems \eqref{eq:subproblem-v} and \eqref{eq:subprolem-w}, we obtain NBO-CG.
Our analysis in Appendix \ref{app:thmdeter} shows that NBO-CG requires fewer Hessian-vector product computations than NBO-GD, specifically $ O(\kappa^{3.5} \log \kappa/\epsilon) $ Hessian-verctor products. 
\end{remark}

Theorem \ref{thm:deter} provides a theoretical complexity guarantee for NBO-GD.
As shown in Table \ref{tab:compare}, the computational complexity of NBO-GD improves upon that of AID-BiO \cite{ji2021bilevel} and AmIGO \cite{arbel2022amortized}.
Specifically, the gradient complexity of NBO-GD surpasses the state-of-the-art result by an order of $ \kappa \log \kappa $. 

\subsection{Convergence Analysis for NSBO-SGD}

The following assumptions are made regarding the stochastic oracles, which will be used to analyze the convergence rate and sample complexity of NSBO-SGD in Algorithm \ref{alg:NSBOSGD}.

\begin{assumption}\label{asp:sto}
	There exist positive constants $ \sigma_{f,1}$, $\sigma_{g, 1}$, $\sigma_{g, 2} $ such that
	\begin{align*}
	\mathbb{E}\left[\|\nabla F(x, y ; \xi)-\nabla f(x, y)\|^2\right] \leq
    \sigma_{f,1}^2, \\
	\mathbb{E}\left[\|\nabla G(x, y ; \zeta)-\nabla g(x, y)\|^2\right] \leq
    \sigma_{g,1}^2, \\
	\mathbb{E}\left[\left\|\nabla^2 G(x, y ; \zeta)-\nabla^2 g(x, y)\right\|^2\right] \leq
    \sigma_{g,2}^2.
	\end{align*}
\end{assumption}
 
\begin{assumption}\label{asp:moment-bound}
	There is a constant $ r\geq 1 $ such that for any iterate $ (x^k,y^k) $ generated by Algorithm \ref{alg:NSBOSGD}, we have
	\begin{align}\label{eq:moment-bound}
	\E\left[\|y^k-y^*(x^k)\|^2\right]
	\leq r\big(\E\left[\|y^k-y^*(x^k)\|\right]\big)^2.
	\end{align}
\end{assumption}

This assumption, which imposes bounded moments on iterates, is commonly used in the stochastic Newton literature to establish convergence results in expectation form \cite{bollapragada2019exact, meng2020fast, berahas2020investigation}.
We now establish the convergence properties of NSBO-SGD as follows. The proof details can be found in Appendix \ref{app:thmsto}. {
Below, we define BOX 2 to represent the set of initial points satisfying
$\E\left[\|y^0-y^*(x^0)\|^2\right] \leq \min \big\{\frac{\mu^2}{20 r L_{g,2}^2}, \frac{1}{4 L_1}\big\}$ and $ \E\left[\|u^0-u^* (x^0)\|^2\right] \leq \min\big\{\frac{4 L_1^2}{5 r L_{g,2}^2}, \frac{L_1}{\mu^2}\big\} $.
}

\begin{theorem} \label{thm:sto}
	Under Assumptions \ref{asp:base}, \ref{asp:sto} and \ref{asp:moment-bound}, 
    choose an initial iterate $(y^0, u^0, x^0)$ in BOX 2. Then, for any constant step size $\gamma_k=\gamma\leq 1/L_{g,1}$, 
    there exists a proper constant step size $ \alpha_k = \alpha  = \Theta(\kappa^{-3})$ and $ T \geq \Theta(\kappa) $ such that NSBO-SGD has the following properties: 
		\begin{enumerate}[label=(\alph*)]
		\item  
        Fix $ K\geq 1 $. For samples with batch sizes $ |B_1^{t,k}| \geq \Theta (\kappa^2)$, $|B_1^k|\geq \Theta (\kappa K + \kappa^2)$, $|B_2^k| \geq \Theta (\kappa^3 K + \kappa^4)$, $|B_3^k| \geq \Theta (\kappa^{-1} K)$, $|B_4^k| \geq \Theta (\kappa^{-1} K) $, 
        it holds that $ \min_{0 \le k \le K-1} \E\left[\|\nabla \Phi(x^k)\|^2\right]  = O(\frac{\kappa^3}{K}). $
        That is, NSBO-SGD can find an $\epsilon$-optimal solution in $K=O(\kappa^3 \epsilon^{-1})$ steps.
		\item 
        The computational complexity of NSBO-SGD is: $ O(\kappa^5 \epsilon^{-2}) $ gradient complexity for $ F $, $O(\kappa^9 \epsilon^{-2}) $ gradient complexity for $ G $, $ O\big(\kappa^5 \epsilon^{-2}\big) $ Jacobian-vector product complexity, and $ O(\kappa^7 \epsilon^{-2}) $ Hessian-vector product complexity.
	\end{enumerate} 
\end{theorem}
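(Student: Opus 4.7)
The plan is to adapt the deterministic analysis of Theorem 3.1 to the stochastic setting, carefully quantifying how each source of noise (sampled gradients $D_y^k,D_x^k$, sampled Hessians $H^{t,k}$ and $\nabla_{22}^2 G(\cdot;B_1^k)$, and the sampled Jacobian inside $D_x^k$) enters the error recursions, and then selecting the batch sizes so each noise term is of the same order as the deterministic error terms it must be absorbed by.

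First, I would establish a stochastic analog of the Newton quadratic-decay estimate (1.5). For one SGD sweep in $\text{SGD}(x^k,y^k,u^k;T)$ starting at $v^{-1,k}=0$, the expected squared distance $\E[\|y^{k+1}-y^*(x^k)\|^2]$ decomposes into: (i) a contraction term of order $(1-\gamma\mu)^{T+1}$ from the strong-convexity of the quadratic subproblem \eqref{eq:subproblem-v}; (ii) a quadratic-decay term $\tfrac{L_{g,2}^2}{4\mu^2}\|y^k-y^*(x^k)\|^4$ inherited from (1.5); and (iii) stochastic noise of order $\gamma\sigma_{g,1}^2/|B_2^k|$ and $\gamma\sigma_{g,2}^2/|B_1^{t,k}|$ accumulated over the inner steps, where the latter comes from $(I-\gamma H^{t,k})$ acting on the current inner iterate. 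An analogous recursion is needed for $\E[\|u^{k+1}-u^*(x^k)\|^2]$ via the warm-start reparameterization $u=u^k-w$; here an additional variance term $\sigma_{f,1}^2/|B_3^k|$ appears in $D_u^k$. A final application of the Lipschitz continuity of $y^*$ and $u^*$ converts these into bounds on $\E\|y^{k+1}-y^*(x^{k+1})\|^2$ and $\E\|u^{k+1}-u^*(x^{k+1})\|^2$, introducing an $\alpha^2\E\|D_x^k\|^2$ term.

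Second, I would combine these two recursions with a descent-type inequality for $\Phi$. Using $L_\Phi$-smoothness and the hypergradient error bound \eqref{hypergradientapproximation}, one obtains
\begin{equation*}
\E[\Phi(x^{k+1})]\le \E[\Phi(x^k)]-\tfrac{\alpha}{2}\E\|\nabla\Phi(x^k)\|^2+C_1\alpha\,\E\|y^k-y^*(x^k)\|^2+C_2\alpha\,\E\|u^k-u^*(x^k)\|^2+\tfrac{L_\Phi\alpha^2}{2}\E\|D_x^k\|^2,
\end{equation*}
where $\E\|D_x^k-d_x^k\|^2=O(\sigma_{f,1}^2/|B_3^k|+\sigma_{g,2}^2/|B_4^k|)$. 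A Lyapunov function of the form $V_k:=\E[\Phi(x^k)]+A\,\E\|y^k-y^*(x^k)\|^2+B\,\E\|u^k-u^*(x^k)\|^2$ with constants $A=\Theta(\kappa^2)$, $B=\Theta(1)$, chosen so the cross-coefficients from the $y$- and $u$-recursions cancel the $C_1\alpha,C_2\alpha$ perturbations, should telescope to $\sum_{k=0}^{K-1}\E\|\nabla\Phi(x^k)\|^2\le O(\kappa^3)+\text{(noise)}$, yielding the $O(\kappa^3/K)$ rate in part (a). Part (b) then follows by multiplying the batch sizes prescribed in the statement by $K=O(\kappa^3\epsilon^{-1})$ and by the inner-loop count $T+1=\Theta(\kappa)$ for the $B_1^{t,k},B_2^k,B_1^k$ oracles.

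The hard part will be twofold. First, showing that the iterates remain in the neighborhood defining BOX 2 in expectation throughout the run, so that the Newton-style quadratic decay \eqref{eq:quadratic} can be applied at every iteration; this is exactly where Assumption \ref{asp:moment-bound} is needed, since converting $\E\|y^k-y^*(x^k)\|^4$ into $(\E\|y^k-y^*(x^k)\|^2)^2$ via the moment bound is what prevents the quartic Newton residual from inflating to an $O(\kappa)$-worse constant and instead lets it be absorbed into the linear $y$-error term of the Lyapunov function. Second, the inner-loop bookkeeping for SGD on the two quadratic subproblems is delicate because $H^{t,k}$ is correlated with the inner iterates $(v^{t,k},w^{t,k})$ through the previous step; I would handle this by conditioning on the filtration generated before sampling $B_1^{t,k}$, producing a clean variance term $\gamma^2\sigma_{g,2}^2/|B_1^{t,k}|$ times $\E\|(v^{t,k},w^{t,k})-(v^*,u^*-u^k)\|^2$, and then closing a Gronwall-type recursion in $t$. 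Balancing the resulting noise against the $\Theta(\kappa^3/K)$ target dictates the stated batch sizes.
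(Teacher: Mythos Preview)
Your overall architecture is right and matches the paper: establish stochastic recursions for $\E\|y^k-y^*(x^k)\|^2$ and $\E\|u^k-u^*(x^k)\|^2$, prove an inductive boundedness lemma so the Newton quadratic term can be linearized, combine with the $L_\Phi$-smoothness descent via a Lyapunov function, and telescope. Your identification of the noise sources and the batch-size/complexity bookkeeping is also correct.

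However, your description of how Assumption~\ref{asp:moment-bound} enters is not quite right, and if you proceed as you wrote you would hit an obstacle. You plan to work directly in second moments; then the Newton term $\tfrac{L_{g,2}}{2\mu}\|y^k-y^*(x^k)\|^2$, once squared, produces $\E\|y^k-y^*(x^k)\|^4$ on the right-hand side, which you say you would convert to $(\E\|y^k-y^*(x^k)\|^2)^2$ ``via the moment bound.'' But Assumption~\ref{asp:moment-bound} only relates first and second moments, $\E[X^2]\le r(\E[X])^2$; it says nothing about fourth moments, so that step is unavailable. The paper avoids the fourth moment entirely by running the inner $y$-recursion in \emph{first} moments: one shows
\[
\E\|y^{k+1}-y^*(x^{k+1})\|\;\le\;(1-\tfrac{\mu\gamma}{2})^T\,\E[\cdots]+\tfrac{L_{g,2}}{\mu}\,\E\|y^k-y^*(x^k)\|^2+\tfrac{2\sqrt{S_1^{g,2}}}{\mu}\,\E\|y^k-y^*(x^k)\|+\cdots,
\]
then squares both sides (so the Newton contribution becomes $(\E\|y^k-y^*(x^k)\|^2)^2$, not a fourth moment), and finally applies Assumption~\ref{asp:moment-bound} to the \emph{left-hand side} to get back to $\E\|y^{k+1}-y^*(x^{k+1})\|^2$. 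The $u$-recursion has no quadratic term and can be done in second moments directly. A minor point: the paper's Lyapunov weights are $b_y=4L_1$ and $b_u=\mu^2/(8L_1)$, not $\Theta(\kappa^2)$ and $\Theta(1)$ as you wrote, but this does not affect the argument structurally.
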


Note that the number $T$ of inner-loop steps remains at a constant level. Moreover, {as shown in Table \ref{tab:sto}},
the computational complexity of our stochastic algorithm, NSBO-SGD, improves upon the state-of-the-art result in AmIGO by a factor of $\log \kappa$; for comparison, refer to Proposition 10 and Corollary 4 in \citet{arbel2022amortized}.
Indeed, in each outer iteration, AmIGO requires $\Theta(\kappa \log\kappa)$ gradient computations with a batch size of $\Theta(\kappa^5 \epsilon^{-1})$ for each gradient, while NSBO-SGD performs a single gradient computation with a batch size of  $\Theta(\kappa^6 \epsilon^{-1})$. 
As a result, the complexity improvement in Theorem \ref{thm:sto} is not as significant as that in Theorem \ref{thm:deter}.

\begin{table*}[t]
\renewcommand\arraystretch{2}
\caption{{Comparison of the sample complexities of NSBO-SGD with state-of-the-art results in the stochastic setting, including those reported in ALSET \cite{chen2021closing}, SOBA \cite{dagreou2022framework,huang2024single}, AmIGO \cite{arbel2022amortized}, FSLA \cite{li2022fully}, and F$^{2}$SA \cite{kwon2023fully,chen2024finding}. Note that $p(\kappa)$ indicates that the explicit dependence on $\kappa$ is not provided in the corresponding references. For AmIGO, the dependence on $\log \kappa$ is derived from Proposition 10 in \citet{arbel2022amortized}.}}
\label{tab:sto}
\vskip 0.15in
\begin{center}
	\begin{small}
{		\begin{tabular}{ccc}
			\toprule
			Algorithm & Sample Complexity & Stochastic Estimators  \\ \cline{1-3}
			ALSET \cite{chen2021closing} & \(O(\kappa^9 \epsilon^{-2} \log (\kappa/\epsilon))\) & SGD  \\ \cline{1-3}
			SOBA \cite{dagreou2022framework,huang2024single} & \(O(p(\kappa) \epsilon^{-2})\) & SGD  \\ \cline{1-3}
			AmIGO \cite{arbel2022amortized} & \(O((\kappa^9 \log \kappa) \epsilon^{-2})\) & SGD  \\ \cline{1-3}
			F$^{2}$SA \cite{kwon2023fully,chen2024finding} & \(O\left(\kappa^{11} \epsilon^{-3} \log (\kappa /\epsilon)\right)\) & SGD  \\ \cline{1-3}
			FSLA \cite{li2022fully} & \(O(p(\kappa) \epsilon^{-2})\) & SGD+\(x\)-momentum  \\ \cline{1-3}
			NSBO-SGD (this paper) & \(O(\kappa^9 \epsilon^{-2})\) & SGD  \\
			\bottomrule
		\end{tabular}}
	\end{small}
\end{center}
\vskip -0.1in
\end{table*}

\subsection{Proof Sketch}

In this section, we present a proof sketch for the deterministic setting as an illustrative example. The proof strategy for the stochastic case follows a similar approach.
The detailed proofs of Theorem \ref{thm:deter}, Theorem \ref{thm:sto}, and Remark \ref{rmk:CG} are provided in Appendix \ref{app:proofs}.

First, we define a Lyapunov function:
$V_k=f (x^k, y^*(x^k))-\Phi^* 
    +b_y\|y^k-y^* (x^k)\|^2 + b_u\|u^k-u^* (x^k)\|^2$, 
where $b_y$ and $b_u$ are constants that depend on $\kappa$.  
Then, following a classical Lyapunov analysis, we establish the descent of $f (x^k, y^*(x^k))$, $\|y^k-y^* (x^k)\|$, $\|u^k-u^* (x^k)\|$, respectively. 
Unlike existing bilevel optimization literature, by leveraging a one-step inexact Newton method, we prove that after taking $ T \geq \Theta(\kappa) $ iterations in the subroutine, $\|y^k-y^* (x^k)\|$ satisfies the descent condition:
\begin{align*}
    &\|y^{k+1}-y^*(x^{k+1})\| \leq \frac{L_{g,2}}{2\mu} \|y^*(x^k) - y^k\|^2\\
    &\qquad\qquad\qquad+ \frac{L_{g,1}}{\mu}\|x^{k+1} - x^k\| + \frac{1}{4} \|y^k - y^*(x^k)\|.
\end{align*}

Next, by selecting an appropriate constant step size $\alpha$ and applying induction while leveraging the initialization strategy, we establish that 
$\|y^*(x^k) - y^k\| \leq \frac{\mu}{2L_{g,2}}$ for all $k$. 
Hence, it follows that
\begin{align*}
   & \|y^{k+1}-y^*(x^{k+1})\| \\
   & \qquad  \leq  \frac{1}{2} \| y^*(x^k) - y^k \|
    + \frac{L_{g,1}}{\mu}\|x^{k+1} - x^k\|.
\end{align*}
Combining this with the descent properties of $f (x^k, y^*(x^k))$ and $\|u^k-u^* (x^k)\|$, we derive the descent property of the Lyapunov function:
\begin{align*}
	V_{k+1}-V_k \leq&  -\frac{\alpha_k}{2}\|\nabla \Phi(x^k)\|^2-A_1\|x^{k+1} - x^k\|^2 \\
	& -A_2\|y^k-y^*(x^k)\|^2 -A_3\|u^k-u^*(x^k)\|^2,
\end{align*}
where $A_1$, $A_2$, $A_3$ are positive constants. 

Finally, by summing over iterations using a telescoping argument and estimating an upper bound for $V_0$, we derive the results presented in Theorem \ref{thm:deter}.

\section{Experiments} \label{sec:exp}

In this section, we present experiments to evaluate the practical performance of the proposed NBO framework. Specifically, we compare our NBO-GD and NSBO-SGD methods with several widely used gradient-based algorithms, including SOBA, SABA \cite{dagreou2022framework}, StoBiO \cite{ji2021bilevel}, AmIGO \cite{arbel2022amortized}, SHINE \cite{ramzi2022shine}, F2SA \cite{kwon2023fully}, and MA-SABA \cite{chu2024spaba}.
Details of the experimental setup and additional results are provided in Appendix \ref{app:exp}.

\subsection{Synthetic Problem} \label{sec:syn}

First, we consider a synthetic problem to study NBO in a deterministic and controlled scenario. This problem focuses on hyperparameter optimization, where the upper-level and lower-level objective functions are defined as follows:
\begin{align*}
f(\lambda, \omega) &= \frac{1}{|\mathcal{D}^{\prime}|} \sum_{(x_e^{\prime}, y_e^{\prime}) \in \mathcal{D}^{\prime}} \psi(\omega x_e^{\prime} y_e^{\prime}), \\
g(\lambda, \omega) &= \frac{1}{|\mathcal{D}|} \sum_{(x_e, y_e) \in \mathcal{D}} \psi(\omega x_e y_e) + \frac{1}{2} \sum_{i=1}^{p} e^{\lambda_i} w_i^2, 
\end{align*}
where $\lambda\in \mathbb{R}^p$ represents the hyperparameter, $\omega \in \mathbb{R}^{1 \times p}$ denotes the model parameter, and $\psi(t) = \log(1 + e^{-t})$ is the logistic loss function. 
Here, $\mathcal{D}^{\prime}$ and $\mathcal{D}$ represent the validation and training datasets, respectively. The synthetic data is generated following a procedure similar to that described in \citet{chen2023decentralized,chen2024decentralized,dong2023single}. Specifically, the distribution of $x_e$ follows a normal distribution $\mathcal{N}(0, {r'}^2)$, and $y_e = wx_e + 0.1z$, where $z$ is sampled from $\mathcal{N}(0, 1)$. 
Subsequently, $ y_e $ is transformed into binary labels: if $ y_e $ exceeds the median of the dataset, it is set to $1$; otherwise, it is set to $-1$. 

\begin{figure}
    \centering
\includegraphics[width=1\linewidth]{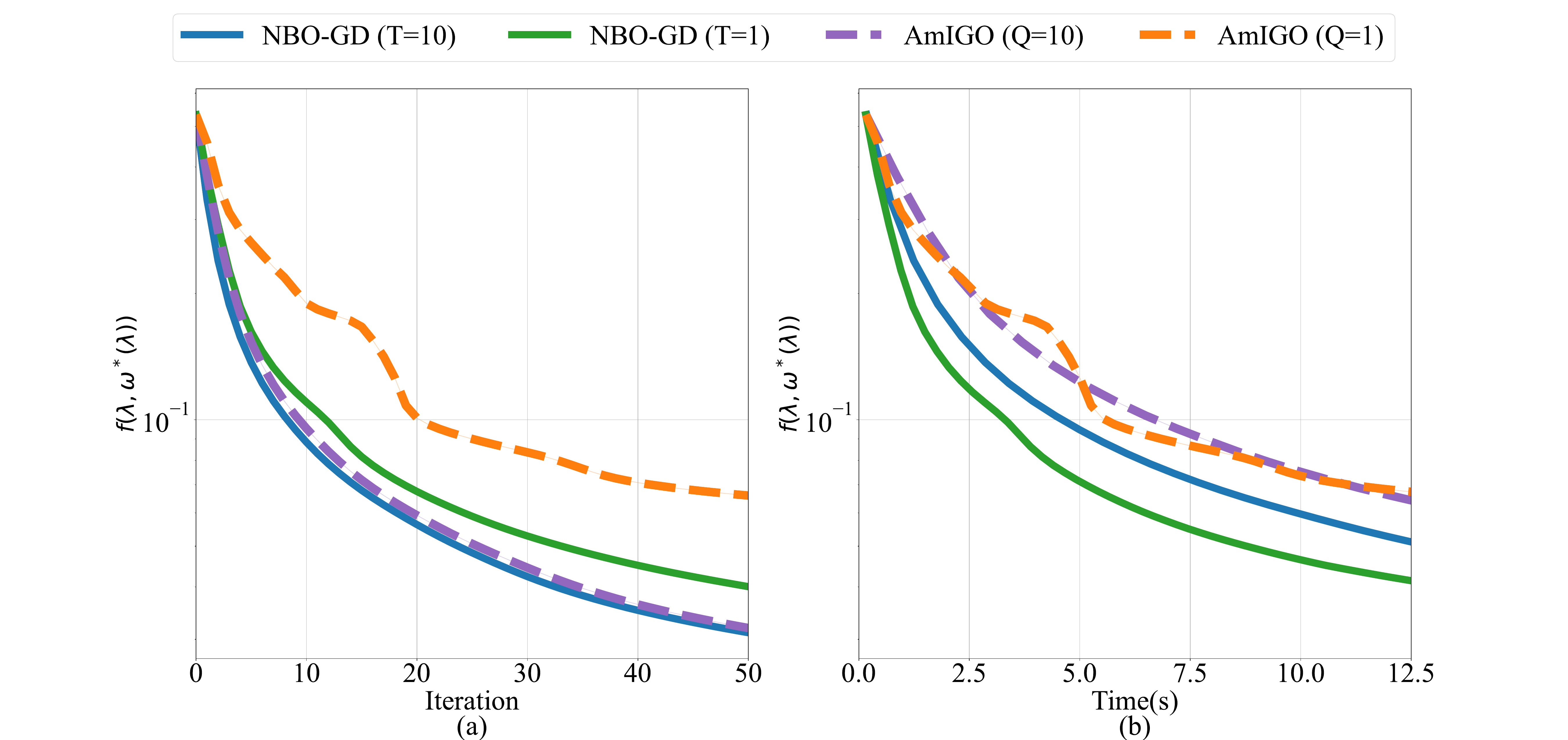}
\vspace{-0.8cm}
    \caption{Experimental results on synthetic data with $r'=1$.}
    \label{fig:syn}
\end{figure}

The experimental results for $ r'=1 $ are shown in Figure \ref{fig:syn}, while the results for $ r'=0.5 $ and $ r'=2 $ are provided in Appendix \ref{app:exp}. 
AmIGO is a representative method that employs multiple ($Q$) (stochastic) gradient descent steps to solve lower-level problems and quadratic subproblems related to Hessian inverse-vector products. Since we will later compare NBO-type algorithms with other gradient-based algorithms, we restrict the comparison in the synthetic problem to NBO-GD and AmIGO for simplicity.

As shown in Figure \ref{fig:syn}(a), our NBO-GD, which employs a single step of the inexact Newton method, performs comparably to AmIGO, which uses $Q = 10$ steps of gradient descent. Notably, the inner loop in NBO-GD is used to approximate the inexact Newton direction.
More importantly, Figure \ref{fig:syn}(a) demonstrates that NBO-GD maintains strong performance even when the Newton direction is approximated with $T = 1$. In contrast, AmIGO’s performance degrades significantly when only one step of gradient descent is used.
Finally, when running time is considered, Figure \ref{fig:syn}(b) shows that NBO-GD ($T = 1$) outperforms other methods. Based on these findings, we use $T = 1$ for NBO-type algorithms in subsequent experiments.

\subsection{Hyperparameter Optimization} \label{sec:appli}

In this section, we evaluate the empirical performance of our NSBO-SGD algorithm on hyperparameter selection problems, a typical bilevel optimization task \cite{franceschi2018bilevel,ji2021bilevel,dagreou2022framework}. The upper- and lower-level objective functions are defined as follows:
\begin{align*}
f(\lambda, \omega) &= \frac{1}{|\mathcal{D}^{\prime}|} \sum_{(x_e^{\prime}, y_e^{\prime}) \in \mathcal{D}^{\prime}} l\left(x_e^{\prime},y_e^{\prime}; \omega\right), \\
g(\lambda, \omega) &= \frac{1}{|\mathcal{D}|} \sum_{(x_e, y_e) \in \mathcal{D}} l\left(x_e,y_e; \omega\right) + r(\lambda,\omega),
\end{align*} 
where $\mathcal{D}^{\prime}$ and $\mathcal{D}$ denote the validation and training datasets, respectively. Here, $\lambda$ is the hyperparameter, $\omega \in \mathbb{R}^{c \times p}$ denotes the model parameter, $l$ is the loss function, and $r(\lambda, \omega)$ is a regularizer.

We conduct experiments on two datasets: IJCNN1 and Covtype.
The IJCNN1 dataset corresponds to a binary classification problem using logistic regression, with $p=22$, $c=1$. The loss function is defined as  $\ell(x_e,y_e;\omega):=\psi(\omega x_e y_e)$, where $\psi$ is the
logistic function. The regularizer is given by $\frac{1}{2} \sum_{i=1}^{p} e^{\lambda_i} w_{i}^2$.
The Covtype dataset corresponds to a multi-class classification problem using logistic regression, with $p=54$, $c=17$. 
The loss function is the cross-entropy function, and the regularization term is given by $\frac{1}{2} \sum_{j=1}^{c} e^{\lambda_j} \sum_{i=1}^{p} w_{ji}^2$.

\begin{figure}[ht]
	\centering
	\includegraphics[width=1\linewidth]{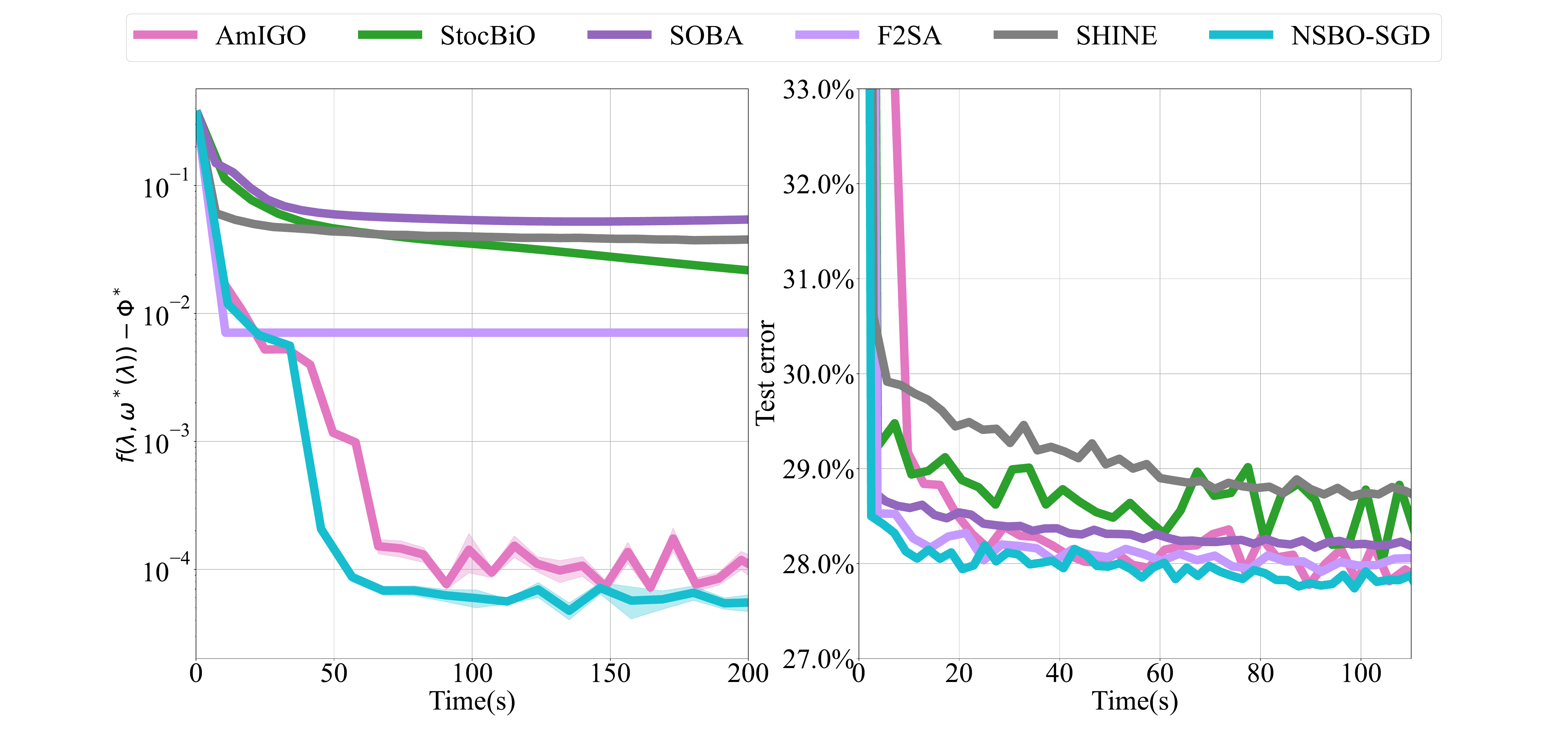}
    \vspace{-0.8cm}
	\caption{Comparison between NSBO-SGD and other algorithms on hyperparameter optimization. \textbf{Left:} IJCNN1 dataset; \textbf{Right:} Covtype dataset. }
	\label{fig:ho}
\end{figure}

In Figure \ref{fig:ho}, we present the suboptimality gap on IJCNN1 and the test error on Covtype as functions of time. Our first observation is that, among all methods, NSBO-SGD exhibits the fastest convergence on both datasets, with its performance advantage being particularly evident on IJCNN1. 
Second, the gap between SOBA and AmIGO on IJCNN1 highlights the benefits of performing multiple SGD steps when solving lower-level problems, as well as handling quadratic subproblems involving Hessian inverse-vector products.
In comparison, NSBO-SGD highlights the practical advantages of incorporating curvature information from the subroutine that computes an approximation of Newton directions. 
Notably, in Figure \ref{fig:ho}, NSBO-SGD uses $T = 1$, whereas AmIGO employs $Q = 10$. 
This demonstrates that the inner loop for approximating Newton directions in NSBO-SGD is more effective than the inner loop used for solving lower-level problems and quadratic subproblems in AmIGO.

\subsection{Data Hyper-Cleaning}

We also conduct data hyper-cleaning experiments \cite{franceschi2017forward, dagreou2022framework} on two datasets: MNIST and FashionMNIST \cite{xiao2017fashion}. 
Data hyper-cleaning involves training a multiclass classifier while addressing training samples with noisy labels. It can be formulated as a bilevel optimization problem with the following objective functions:
\begin{align*}
f(\lambda, \omega) &= \frac{1}{|\mathcal{D}^{\prime}|}\sum_{(x_e^{\prime}, y_e^{\prime})\in \mathcal{D}^{\prime}} \mathcal{L}(\omega x_e^{\prime}, y_e^{\prime}),\\
g(\lambda, \omega) &= \frac{1}{|\mathcal{D}|}\sum_{(x_e, y_e)\in \mathcal{D}} \sigma(\lambda_e) \mathcal{L}(\omega x_e , y_e) + c_r \|\omega\|^2,
\end{align*}
where $\mathcal{L}$ denotes the cross-entropy loss, $ \sigma $ is the sigmoid function, and $ c_r $ is a regularization parameter. For this experiment, we set the corruption probability to $p'=0.5$, as in \citep{dagreou2022framework}.

\begin{figure}[ht]
	\centering
	\includegraphics[width=1\linewidth]{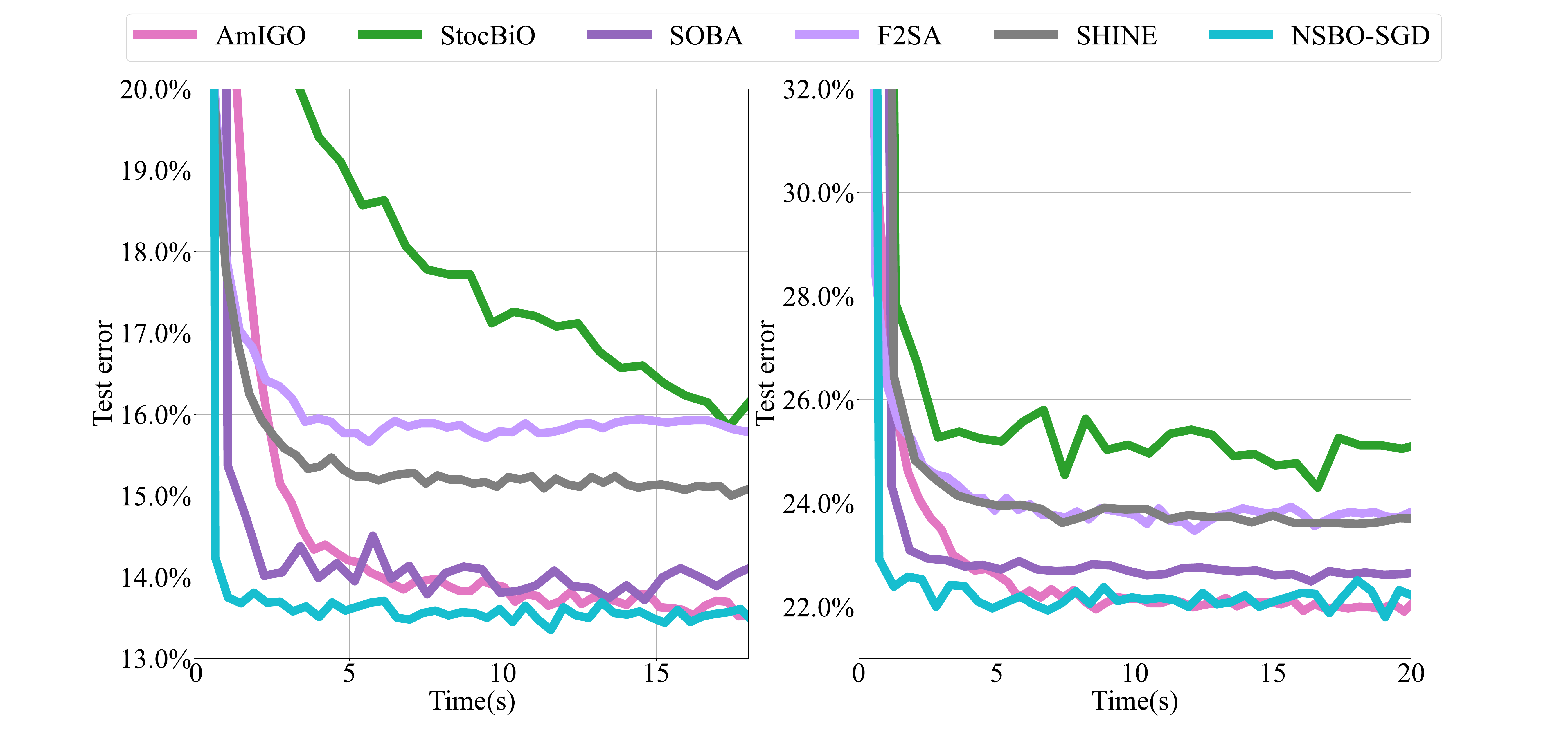}
    \vspace{-0.5cm}
	\caption{Comparison between NSBO-SGD and other algorithms on data hyper-cleaning. \textbf{Left:} MNIST dataset; \textbf{Right:} FashionMNIST dataset. }
	\label{fig:hc}
\end{figure}

In Figure \ref{fig:hc}, we report the test errors for each method \textit{w.r.t} running time on MNIST and FashionMNIST.
We observe that both AmIGO and NSBO-SGD outperform all other methods by reaching the smallest error.
Meanwhile, NSBO-SGD is the fastest, demonstrating the efficiency of incorporating curvature information in bilevel optimization.

\subsection{Other Implementations of NBO}

\paragraph{Variance reduction and momentum.} 
Similar to the gradient-based framework in \citep{dagreou2022framework}, the simplicity of our NBO approach allows us to easily incorporate variance-reduced gradient estimators and momentum techniques \cite{chen2024optimal, chu2024spaba}.
We conduct numerical experiments to test the versatility of NBO by comparing it with the framework in \citep{dagreou2022framework}, using the same variance-reduced gradient estimator and momentum technique.

SABA \cite{dagreou2022framework} is an adaptation of the variance reduction algorithm SAGA \cite{defazio2014saga} for bilevel optimization.
MA-SABA \cite{chu2024spaba} extends the SABA algorithm by incorporating an additional standard momentum (also referred to as a moving average) into the update of the upper-level variable.  
We take a similar approach and extend SAGA to NBO by replacing SGD in NSBO-SGD with SAGA, resulting in a new implementation of NBO, referred to as NSBO-SAGA.
If we further incorporate an additional standard momentum into the update of the upper-level variable, we obtain MA-NSBO-SAGA, another new implementation of NBO.

In Figure \ref{fig:vr}, we present the comparison results between SABA and NSBO-SAGA, as well as between MA-SABA and MA-NSBO-SAGA. We observe that both NSBO-SAGA and MA-NSBO-SAGA achieve better performance. 

\begin{figure}[ht]
	\centering
	\includegraphics[width=1\linewidth]{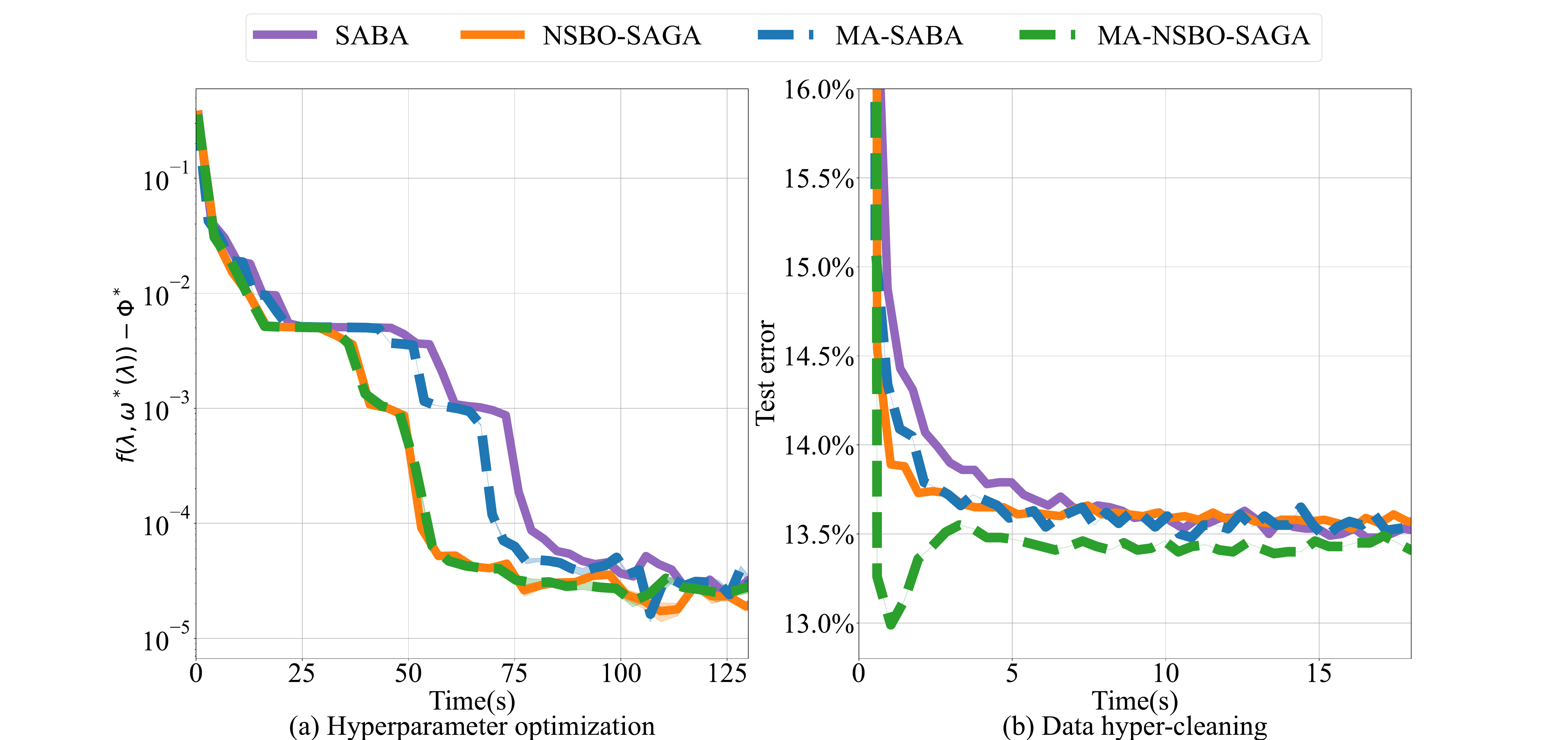}
 \vspace{-0.5cm}
	\caption{
    Comparison between the NBO framework and other algorithms incorporating variance reduction and the moving average technique.  \textbf{Left:} Hyperparameter optimization on IJCNN1; \textbf{Right:} Data hyper-cleaning on MNIST. }
	\label{fig:vr}
\end{figure}

 \vspace{-5mm}
 
{\paragraph{Extension to lower-level non-strongly convex structures.} 
The NBO framework is primarily designed for bilevel optimization problems in which the lower-level objective is strongly convex. For problems with a non-strongly convex lower-level structure, existing methods often reformulate the original problem to induce strong convexity. Once this condition is met, the NBO framework can be applied. For instance, in the BAMM method \cite{liu2023averaged}, when $g$ is merely convex, an aggregation function $\phi_{\mu} = \mu f + (1 - \mu) g$ is introduced. This function becomes strongly convex if $f$ is strongly convex. An approximate hypergradient $d_x^k$ can then be computed by substituting $g$ with $\phi_{\mu}$. 

We compare the BAMM method and BAMM+NBO, where the NBO framework is used to compute $d_x^k$ within BAMM, on the toy example proposed in \citet{liu2023averaged}. The example is defined as follows: 
\begin{align*}
    &\min _{x \in \mathbb{R}^n} \frac{1}{2}\left\|x-y_2\right\|^2+\frac{1}{2}\left\|y_1-\mathbf{e}\right\|^2 \\
    &\text { s.t. } \quad y=\left(y_1, y_2\right) \in \arg \min _{\left(y_1, y_2\right) \in \mathbb{R}^{2 n}} \frac{1}{2}\left\|y_1\right\|^2-x^{\top} y_1,
\end{align*}
where $\mathbf{e}$ is a vector with all components equal to 1. It is straightforward to derive that the optimal solution is $(\mathbf{e},\mathbf{e},\mathbf{e})$. 
The results, shown in Figure~\ref{fig:averaged}, demonstrate that BAMM+NBO significantly outperforms BAMM, highlighting the effectiveness of the NBO framework. 

\begin{figure}[ht]
    \centering
    \includegraphics[width=0.5\linewidth]{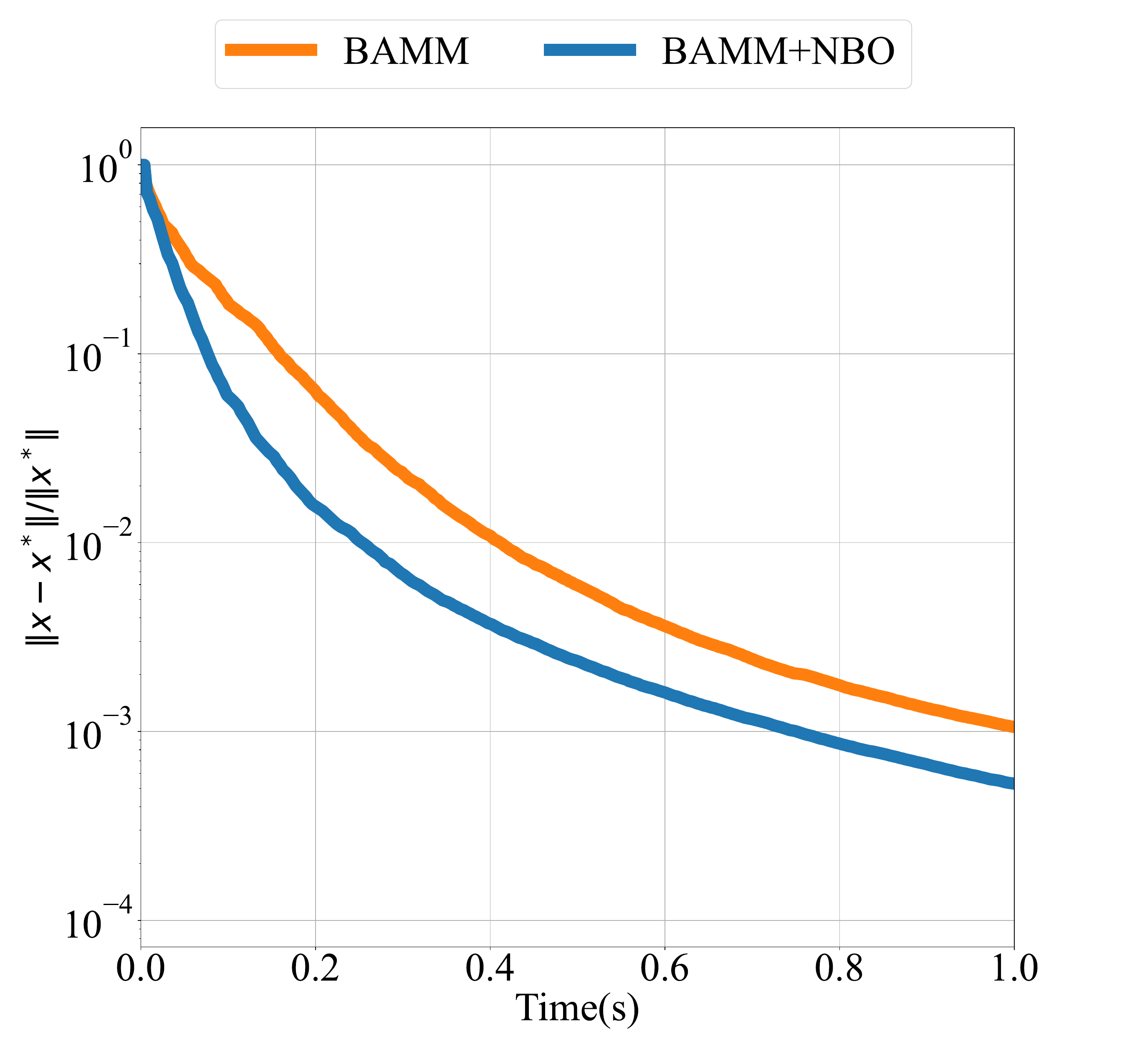}
    \vspace{-0.5cm}
    \caption{
    Performance of the NBO framework on a toy example with a non-strongly convex lower-level problem.}
    \label{fig:averaged}
\end{figure}}

  \vspace{-0.5cm}
\section{Conclusion}
In this work, we design and analyze a simple and efficient framework (NBO) for bilevel optimization, leveraging the hypergradient structure and inexact Newton methods. The convergence analysis and experimental results for specific examples of NBO demonstrate the benefits of incorporating curvature information into the optimization process for bilevel problems. However, many additional benefits and extensions could be explored, such as the exploitation of parallel and distributed computation, and the integration of noise reduction techniques.

\section*{Impact Statement}
This paper presents work whose goal is to advance the field of Machine Learning. There are many potential societal consequences of our work, none which we feel must be specifically highlighted here.

{\section*{Acknowledgements}
Authors listed in alphabetical order. This work is supported by the National Natural Science Foundation of China (12431011, 12371301, 12371305, 12222106, 12326605), Natural Science Foundation for Distinguished Young Scholars of Gansu Province (22JR5RA223) and Guangdong Basic and Applied Basic Research Foundation (No. 2022B1515020082).
We thank the anonymous reviewers for their valuable comments and constructive suggestions on this work.
}

\bibliography{nbo}
\bibliographystyle{icml2025}

\newpage
\appendix
\onecolumn

\section{Appendix}

The appendix is organized as follows: 
\begin{itemize}
        \item More related work is provided in Section \ref{app:relatedwork+}.
	\item Experimental details and additional experiments are provided in Section \ref{app:exp}.
        \item The initialization strategy of the proposed algorithms is provided in Section \ref{app:initial}.
	\item Detailed proofs of the main theorems are provided in Section \ref{app:proofs}.
\end{itemize}

\vspace{0.4cm}
\section{More Related Works}
\label{app:relatedwork+}

\paragraph{Bilevel optimization.}

Bilevel optimization addresses the challenges associated with nested optimization structures commonly encountered in various machine learning applications, as discussed in recent survey papers \cite{liu2021investigating, zhang2024introduction}. 
Although numerous methods have emerged, the numerical computation of bilevel optimization remains a significant challenge, even when the lower-level problem has a unique solution \cite{pedregosa2016hyperparameter,ghadimi2018approximation,kwon2023fully}. 
Assuming further that the Hessian of the lower-level objective function \textit{w.r.t} the lower-level variables is invertible, the hypergradient is well-defined and can be derived from the implicit function theorem. 
Recently, several strategies have been proposed to solve bilevel optimization. For instance, the iterative differentiation (ITD)-based approach \cite{maclaurin2015gradient, franceschi2018bilevel,grazzi2020iteration,ji2021bilevel} estimates the Jacobian of the lower-level solution map by differentiating the steps used to compute an approximation of the lower-level solution.
The approximate implicit differentiation (AID)-based approach \cite{ghadimi2018approximation, chen2021closing, ji2021bilevel, ji2022will, arbel2022amortized, dagreou2022framework, hong2023two} is directly based on equation \eqref{eq:hypergradient}. It performs several (stochastic) gradient descent steps in the lower-level problem, followed by the estimation of the Hessian inverse-vector product, which can be computed using Neumann approximations \cite{chen2021closing, hong2023two}, solving a linear system \cite{pedregosa2016hyperparameter, ji2021bilevel}, or solving a quadratic programming problem \cite{arbel2022amortized, dagreou2022framework}.

The ITD- and AID-based methods involve numerous Hessian- and Jacobian-vector products, which can be efficiently computed and stored using existing automatic differentiation packages \cite{pearlmutter1994fast,dagrou2024howtocompute}. Another class of methods, which rely solely on the first-order gradients of the upper- and lower-level objective functions, is based on the value function reformulation \cite{ye1995optimality} and is referred to as the value function-based approach \cite{liu2022bome, kwon2023fully, chen2023near, liu2023value, kwon2024penalty}.
From the hypergradient perspective, all of the aforementioned gradient-based bilevel optimization algorithms are inexact hypergradient methods. The distinction between these methods lies in whether the hypergradient approximation is directly derived from equation \eqref{eq:hypergradient}. For instance, the AID-based approach is directly based on equation \eqref{eq:hypergradient}, whereas the ITD-based and value function-based approaches are not. However, what these methods do is to approximate the hypergradient. For example, Proposition 2 in \citet{ji2021bilevel} provides the explicit form of the ITD-based hypergradient estimate, while Lemma 3.1 in \citet{kwon2023fully} offers a hypergradient estimate for the value function-based approach. 
In stochastic bilevel optimization, various stochastic techniques (e.g., momentum and variance reduction) from single-level optimization have been employed to improve the convergence rate \cite{yang2021provably,dagreou2022framework,khanduri2021near,chen2024optimal,dagreou2024lower,chu2024spaba,huang24}.

{Although our paper focuses on the lower-level strongly convex case, the lower-level non-strongly convex case is also of great interest. In some works, the hypergradient can still be approximated in non-strongly convex case, by combining the pseudo inverse of the Hessian \citep{arbel2022non,xiao2023generalized} or aggregation functions \cite{liu2023averaged}. Then this approximate hypergradient can be used to iteratively update the upper-level variable. There are also works that employ penalty function methods, including but not limited to \citet{kwon2024penalty,shen2023penalty,chen2024finding}}.




\vspace{0.4cm}
\section{Experimental Details and Additional Experiments} \label{app:exp}

Our experiments were conducted using the Bilevel Optimization Benchmark framework \cite{dagreou2022framework} and the Benchopt library \cite{moreau2022benchopt}. All experiments were performed on a system equipped with an Intel(R) Xeon(R) Gold 5218R CPU running at 2.10 GHz and an NVIDIA H100 GPU with 80 GB of memory. Each experiment was repeated ten times.

\subsection{Synthetic Data}

For the synthetic dataset, we use 16,000 training samples and 4,000 validation samples. The dimension size is set to $ p = 50 $. For both AmIGO and NBO, the outer step size is set to 1, and the inner step size is set to 0.03. Both algorithms are deterministic, employing full-batch updates. The experimental results for $ r' = 0.5 $ and $ r' = 2 $ are presented in Figures~\ref{fig:syn_0_5} and~\ref{fig:syn_2}, respectively. 

\begin{figure}[ht]
	\centering
	\includegraphics[width=0.58\linewidth]{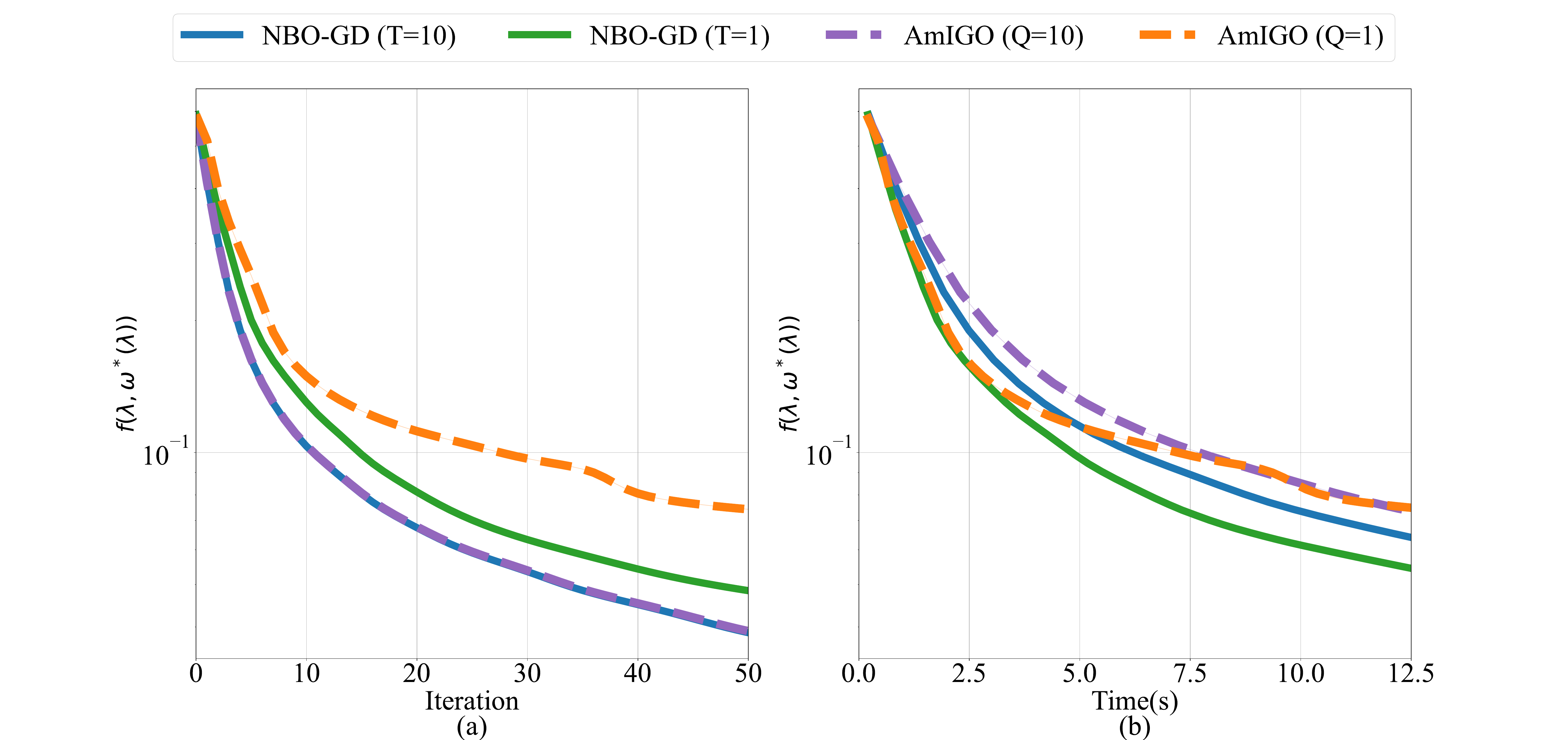}
	\caption{Experimental results on synthetic data with $ r'=0.5 $.}
	\label{fig:syn_0_5}
\end{figure}
\begin{figure}[ht]
	\centering
	\includegraphics[width=0.58\linewidth]{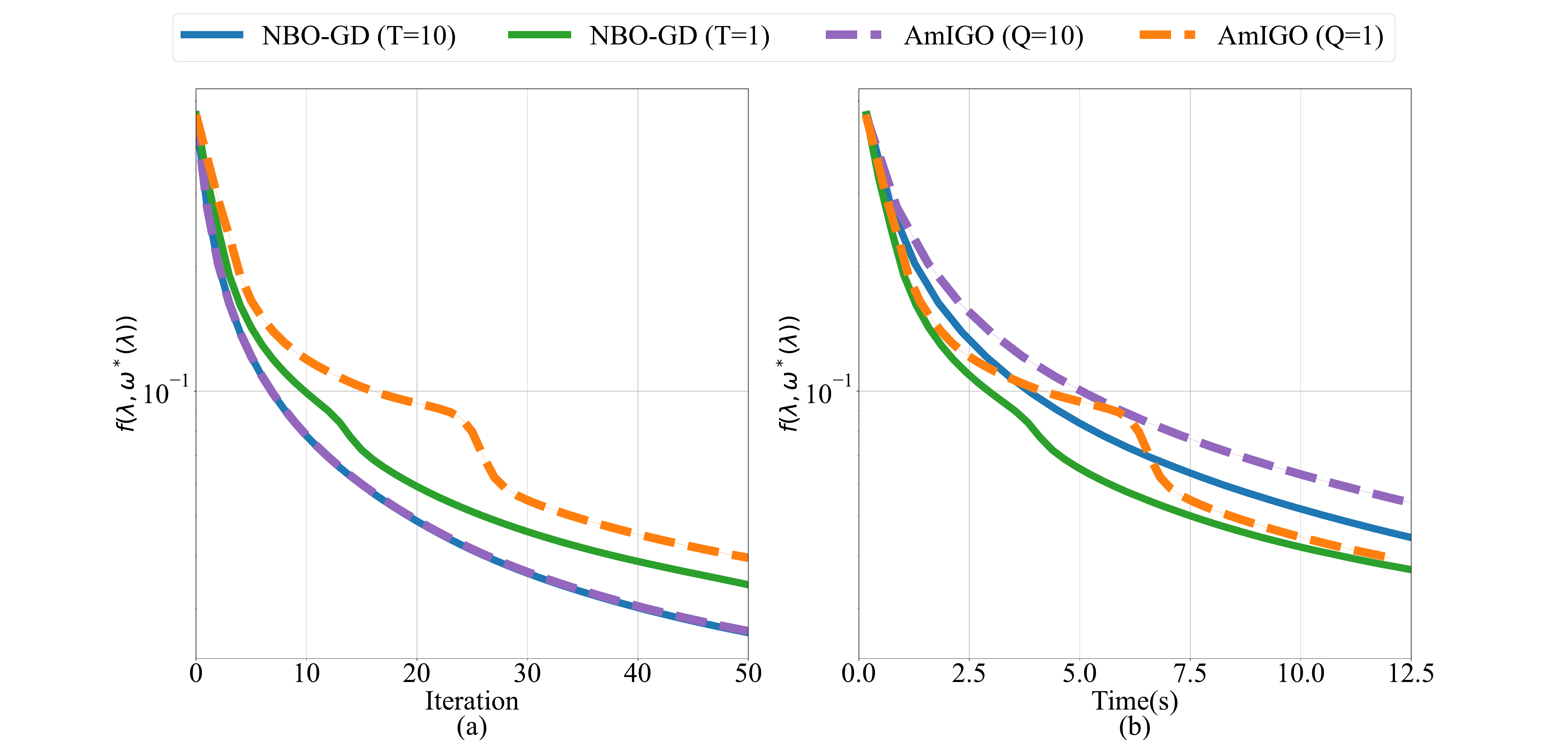}
	\caption{Experimental results on synthetic data with $ r'=2 $.}
	\label{fig:syn_2}
\end{figure}

The results align with our discussion in Section \ref{sec:syn}. Figures \ref{fig:syn_0_5} and \ref{fig:syn_2} demonstrate that NBO-GD maintains strong performance even when the Newton direction is approximated with $T = 1$. In contrast, AmIGO’s performance deteriorates significantly when only a single step of gradient descent is used. When considering runtime, the results show that NBO-GD ($T = 1$) outperforms other methods. {Moreover, we evaluate the scalability of NBO on synthetic data while progressively increasing the problem dimension in Figure \ref{fig:dimension}. We record the time it takes to achieve $(f(\lambda^k,\omega^*(\lambda^k)) - \Phi^*)/(f(\lambda^0,\omega^*(\lambda^0)) - \Phi^*)< 10^{-3}$ when the dimension increasing. The results after 5000 iterations is used as $\Phi^*$. }
\begin{figure}[ht]
    \centering
    \includegraphics[width=0.28\linewidth]{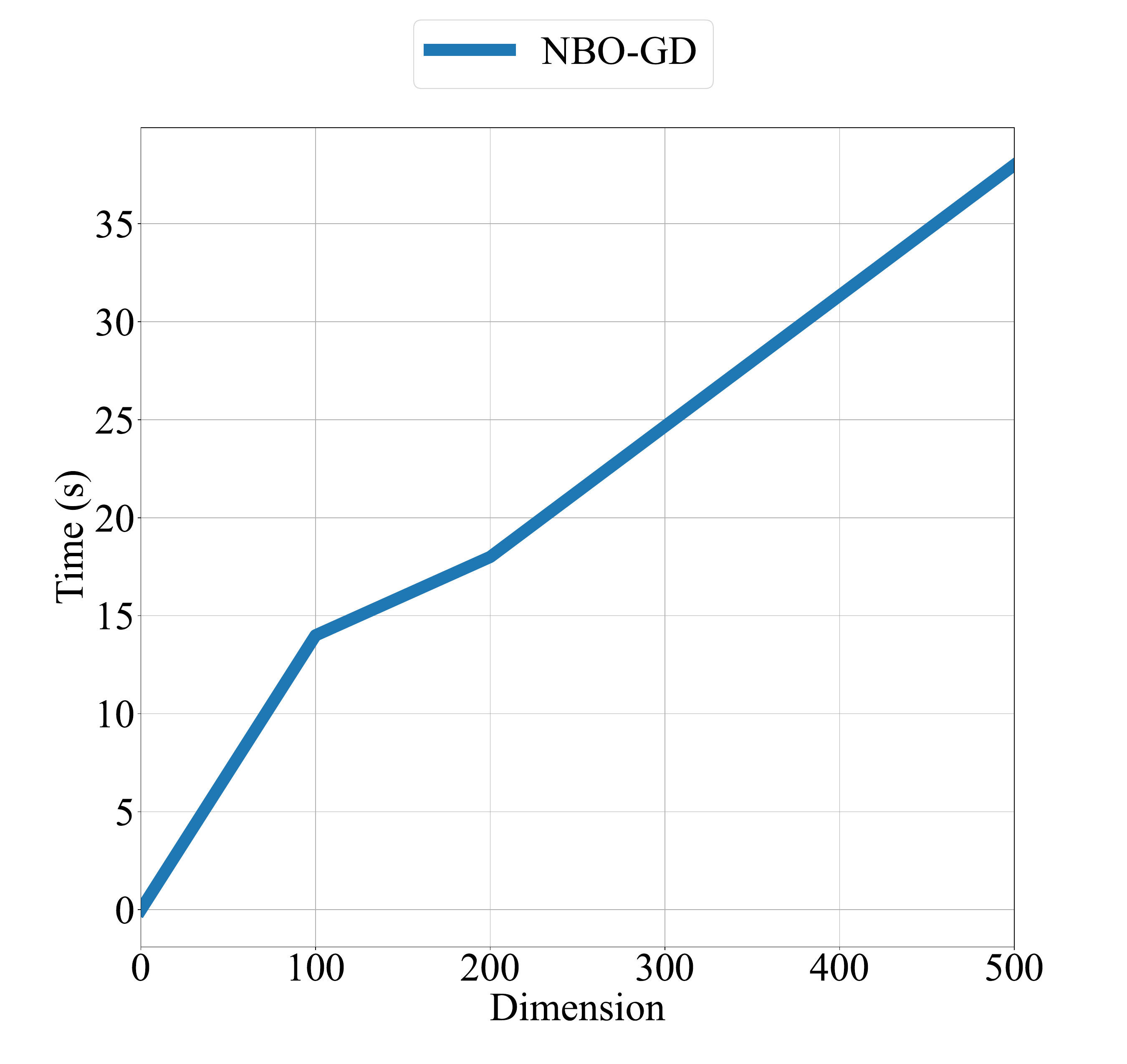}
    \caption{Scalability of NBO to large dimensional problems on synthetic data.}
    \label{fig:dimension}
\end{figure}

\subsection{Hyperparameter Optimization and Data Hyper-Cleaning}

\paragraph{Datasets.} 
We provide detailed information regarding the datasets used in our experiments. For IJCNN1\footnote{https://www.csie.ntu.edu.tw/~cjlin/libsvmtools/datasets/binary.html}, we employ 49,990 training samples and 91,701 validation samples. For Covtype\footnote{https://scikit-learn.org/stable/modules/generated/sklearn.datasets.fetch$\_$covtype.html}, we utilize 371,847 training samples, 92,962 validation samples, and 116,203 testing samples. For MNIST\footnote{http://yann.lecun.com/exdb/mnist/} and FashionMNIST\footnote{https://github.com/zalandoresearch/fashion-mnist}, we use 20,000 training samples, 5,000 validation samples, and 10,000 testing samples, and we corrupt the labels of these two datasets with a probability of \( p' = 0.5 \). 
In terms of computational frameworks, we use JAX \cite{jax2018github} for MNIST, FashionMNIST, and Covtype. For IJCNN1, we use Numba \cite{siu2015numba}, as it demonstrates faster performance compared to JAX for this dataset.

\paragraph{Algorithm Settings.} 
We provide a detailed description of the batch size, step size, and other settings for the algorithms compared in Section \ref{sec:appli}. The batch size for all algorithms is set to 64, except for NSBO-SGD and SHINE. For NSBO-SGD, since the size of $ B_2^k $ is significantly larger than that of other batches in theory, we set $ |B_2^k| = 256 $, while the other batches remain at 64. For SHINE, we set the batch size to 256, as it is a deterministic algorithm by design. In the benchmark, the double-loop algorithms, including AmIGO, StocBiO, and F2SA, are configured with 10 inner loop iterations, and we set the number of iterations for BFGS in SHINE to 5. The step sizes are tuned via grid search, following a method similar to \cite{dagreou2022framework}. The grid search procedure is described in detail below. For algorithms that employ a decreasing step size, the step sizes we search for correspond to their initial values. For SHINE, the inner step size we search for is related to the initial step size of the strong-Wolfe line search. The outer step size is computed as $ \frac{\text{inner step size}}{\text{outer ratio}} $.

\begin{itemize}
	\item \textbf{IJCNN1}: The inner step size is chosen from 6 values between $ 2^{-5} $ and 1, spaced on a logarithmic scale. The outer ratio is chosen in $ \left\{10^{-2}, 10^{-1.5}, 10^{-1}, 10^{-0.5}, 1\right\} $.
	\item \textbf{Covtype}: The inner step size is chosen from 7 values between $ 2^{-6} $ and 1, spaced on a logarithmic scale. The outer ratio is chosen from 4 values between $ 10^{-2} $ and 10, spaced on a logarithmic scale.
	\item \textbf{MNIST and FashionMNIST}: The inner step size is chosen from 4 values between $ 10^{-3} $ and $ 1 $, spaced on a logarithmic scale. The outer ratio is chosen from 4 values between $ 10^{-6} $ and $ 10^{-3} $, spaced on a logarithmic scale.
\end{itemize}

Moreover, for MA-SABA and MA-NSBO-SAGA, we set the moving average coefficient to 0.99 for MNIST and FashionMNIST, 0.9 for IJCNN1, 0.8 for Covtype. Last but not least, we do not follow the initialization in Algorithm \ref{alg:NBOGD} and Algorithm \ref{alg:NSBOSGD}. Instead, we set the initial points for our framework in the same way as other algorithms in the benchmark. Other settings not mentioned are kept consistent with the benchmark.

\paragraph{Additional results.}
We present the additional experimental results for Covtype and FashionMNIST. In Figure \ref{fig:vr_app} (a) , the additional results of hyperparameter optimization on Covtype are shown. In Figure \ref{fig:vr_app} (b), the additional results of data hyper-cleaning on FashionMNIST are displayed. These results demonstrate the effectiveness of our framework under different implementations. In Figure \ref{fig:vr_app} (b), the performance of MA-SABA and MA-NSBO-SAGA is similar, likely because MA-SABA is fast enough on FashionMNIST, leaving little room for further acceleration.
\begin{figure}[ht]
	\centering
	\includegraphics[width=0.58\linewidth]{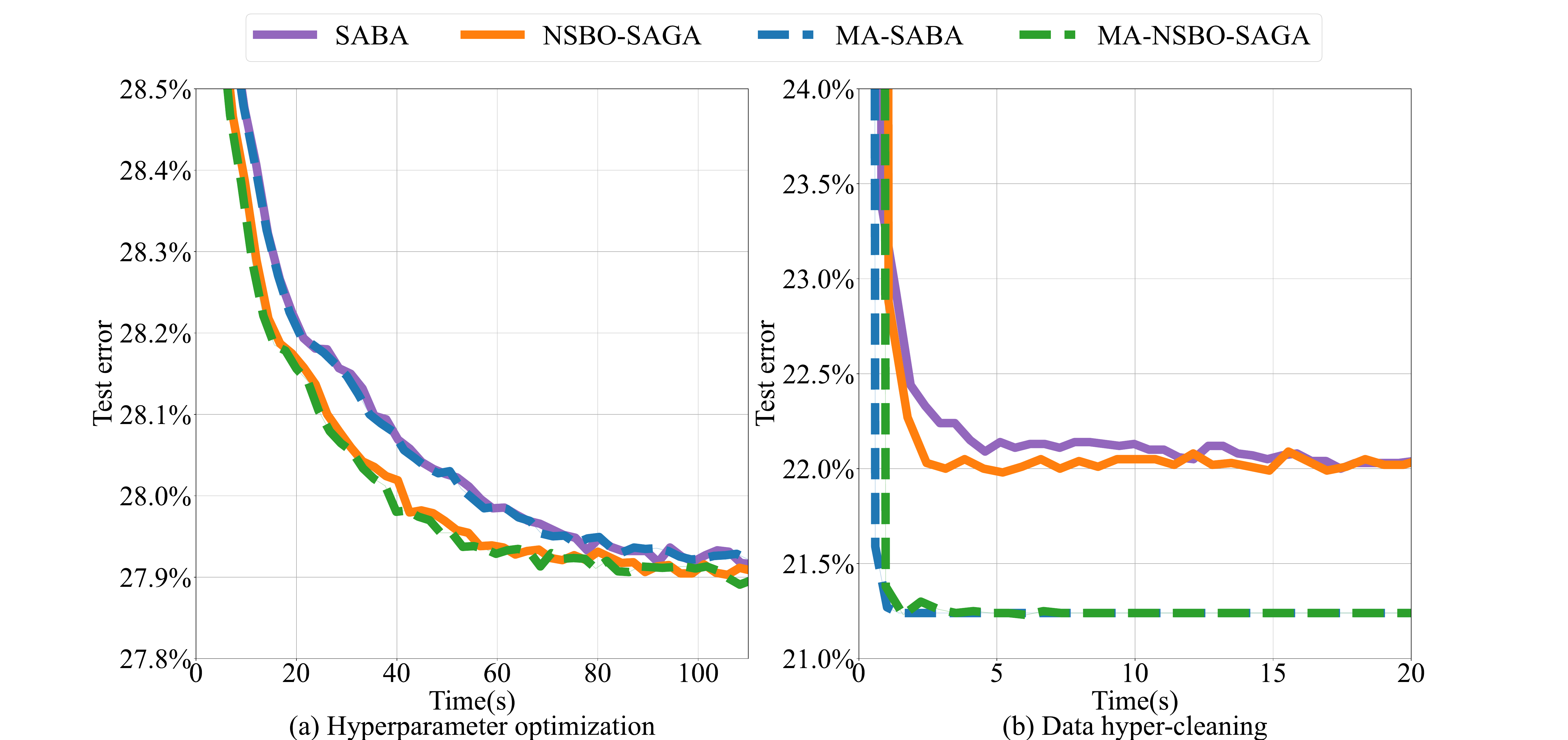}
	\caption{Comparison between NBO framework and other algorithms equipped with variance reduction and moving average technique. \textbf{Left:} Hyperparameter optimization on Covtype; \textbf{Right:} Data hyper-cleaning on FashionMNIST. }
	\label{fig:vr_app}
\end{figure}

\newpage
\section{Initialization Strategy of the Proposed Algorithms} \label{app:initial}

In this section, we introduce two initialization boxes for our proposed algorithms. 
These initializations ensure that the initial points $y^0$, $u^0$  are close to $y^*(x^0)$, $u^*(x^0)$, respectively, thereby guaranteeing that $y^k, u^k$ remain within a neighborhood of $y^*(x^k), u^*(x^k)$ for all $k=0,\cdots,K$ with a controlled step size $\alpha_k$. 
It is important to note that the primary purpose of these initialization requirements is to ensure theoretical validity. In practice, strict adherence to these constraints is not necessary, as the proposed initialization process does not introduce significant computational overhead. Some constants in this section, such as $r_u$ and $L_1$, are defined in \eqref{eq:constant-app}.

\vspace{2mm}
\begin{center}
\noindent\fbox{
	\parbox{0.9\textwidth}{
		\begin{itemize}
			\item Choose $ x^0,y^{0,0},u^{0,0} $, stepsize $ \beta_0 \leq \frac{1}{L_{f,1}} $.
			\item Take $ N_0 \geq 2\log\frac{\min \big\{\frac{\mu}{2\sqrt{ L_1}} , \frac{\mu^2}{2 L_{g,2}}\big\}}{\|\nabla g(x^0,y^{0,0})\|} \big/ \log (1-\beta_0 \mu)=\Theta(\kappa \log \kappa)$ and compute $ y^0 = y^{N_0,0} $:
			\begin{align*}
			& \mbox{\textbf{for} } n = 0,1,\ldots, N_0-1 \\
			& \qquad y^{n+1,0} = y^{n,0} - \beta_0 \nabla_2 g(x^0,y^{n,0})\\
			& \mbox{\textbf{end for}}
			\end{align*}
			\item Take $ Q_0 \geq \log\frac{\min \big\{\frac{5 L_1}{4 L_{g,2}} , \frac{\sqrt{L_1}}{2 \mu}\big\}}{ \|u^{0,0}\|+\frac{L_1}{\mu^2} \|\nabla g(x^0,y^0)\| + r_u} \big/ \log (1-\beta_0 \mu)=\Theta(\kappa \log \kappa)$ and compute $ u^0 = u^{Q_0,0} $:
			\begin{align*}
			& \mbox{\textbf{for} } q = 0,1,\ldots, Q_0-1 \\
			& \qquad u^{q+1,0} = u^{q,0} - \beta_0 \big(\nabla^2_{22} g(x^0,y^0)u^{q,0} - \nabla_2 f(x^0,y^0)\big)\\
			& \mbox{\textbf{end for}} 
			\end{align*}
			\item Output $ x^0,y^0,u^0 $.
		\end{itemize}
		\begin{center}
			BOX 1: Initialization of Algorithm \ref{alg:NBOGD}.
	\end{center}}
}
\vspace{2mm}
\noindent\fbox{
	\parbox{0.9\textwidth}{
		\begin{itemize}
			\item Choose $ x^0,y^{0,0},u^{0,0} $, stepsize $ \beta_0 \leq \frac{1}{L_{f,1}} $, batch size $ {|B_{0,n}|,|B_{0,q}|} \geq \Theta(\kappa^5), {|B'_{0,q}|} \geq \Theta(1)$ according to \eqref{eq:B0}.
			\item Take $ N_0 \geq \log\frac{\min \big\{\frac{\mu^2}{8 L_1} , \frac{\mu^4}{40 r L_{g,2}^2}\big\}}{\|\nabla g(x^0,y^{0,0})\|^2} \big/ \log (1-\beta_0 \mu)=\Theta(\kappa \log \kappa)$ and compute $ y^0 = y^{N_0,0} $:
			\begin{align*}
			& \mbox{\textbf{for} } n = 0,1,\ldots, N_0-1 \\
			& \qquad y^{n+1,0} = y^{n,0} - \beta_0 \nabla_2 G(x^0,y^{n,0}; {B_{0,n}})\\
			& \mbox{\textbf{end for}}
			\end{align*}
			\item Take $ Q_0 \geq \log\frac{\min \big\{\frac{2 L_1^2}{5 r L_{g,2}^2} , \frac{L_1}{2 \mu^2}\big\}}{4 \big(\| u^{0,0}\| + \frac{L_1}{\mu^2}\|\nabla g(x^0,y^0)\| + r_u\big)^2} \big/ \log (1-\beta_0 \mu/2)=\Theta(\kappa \log \kappa)$ and compute $ u^0 = u^{Q_0,0} $:
			\begin{align*}
			& \mbox{\textbf{for} } q = 0,1,\ldots, Q_0-1 \\
			& \qquad u^{q+1,0} = u^{q,0} - \beta_0 \big(\nabla^2_{22}G(x^0,y^0;{B_{0,q}})u^{q,0} - \nabla_2 f(x^0,y^0;{B'_{0,q}})\big)\\
			& \mbox{\textbf{end for}} 
			\end{align*}
			\item Output $ x^0,y^0,u^0 $.
		\end{itemize}
		\begin{center}
			BOX 2: Initialization of Algorithm \ref{alg:NSBOSGD}.
	\end{center}}
}
\end{center}

The following two lemmas describe the properties of the output sequences from BOX 1 and BOX 2, respectively.

\begin{lemma}\label{lem:initial-point-deter}
	Suppose Assumption \ref{asp:base} holds, then $x^0, y^0, u^0 $ in BOX 1 satisfy $ \|y^0 - y^*(x^0)\| \leq \min \big\{\frac{\mu}{2 L_{g,2}}, \frac{1}{2 \sqrt{L_1}}\big\} $, $ \|u^0 - u^*(x^0)\| \leq \min \big\{\frac{5 L_1}{2 L_{g,2}}, \frac{\sqrt{L_1}}{\mu}\big\}$.
\end{lemma}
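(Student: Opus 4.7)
The plan is to prove the two stated inequalities separately, matching the two-phase structure of BOX 1. The $y^0$-bound comes from a standard gradient-descent contraction argument on a strongly convex smooth function; the $u^0$-bound requires introducing the exact minimizer of the inner quadratic subproblem and carefully controlling the perturbation it induces relative to $u^*(x^0)$.

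First I would handle $\|y^0-y^*(x^0)\|$. Since $g(x^0,\cdot)$ is $\mu$-strongly convex (Assumption \ref{asp:base}(a)) with $L_{g,1}$-Lipschitz gradient (Assumption \ref{asp:base}(b)), the textbook contraction for GD with step $\beta_0\le 1/L_{g,1}$ gives $\|y^{n+1,0}-y^*(x^0)\|^2 \le (1-\beta_0\mu)\|y^{n,0}-y^*(x^0)\|^2$. Iterating this $N_0$ times and using strong convexity together with $\nabla_2 g(x^0,y^*(x^0))=0$ to write $\|y^{0,0}-y^*(x^0)\|\le \|\nabla g(x^0,y^{0,0})\|/\mu$ yields
\[
\|y^0-y^*(x^0)\|\le (1-\beta_0\mu)^{N_0/2}\,\|\nabla g(x^0,y^{0,0})\|/\mu.
\]
The lower bound imposed on $N_0$ then collapses the right-hand side to $\min\{\mu/(2L_{g,2}),\,1/(2\sqrt{L_1})\}$.

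Second, for $\|u^0-u^*(x^0)\|$, I would introduce the auxiliary vector $\widetilde u := [\nabla_{22}^2 g(x^0,y^0)]^{-1}\nabla_2 f(x^0,y^0)$, i.e.\ the exact minimizer of the quadratic that the inner $u$-loop targets, and split
\[
\|u^0-u^*(x^0)\|\le \|u^0-\widetilde u\| + \|\widetilde u - u^*(x^0)\|.
\]
For the first summand, the $u$-loop is GD on a quadratic whose Hessian has eigenvalues in $[\mu,L_{g,1}]$, so $\|u^{q+1,0}-\widetilde u\|\le (1-\beta_0\mu)\|u^{q,0}-\widetilde u\|$; applying this $Q_0$ times and bounding $\|u^{0,0}-\widetilde u\|\le \|u^{0,0}\|+\|\widetilde u-u^*(x^0)\|+\|u^*(x^0)\|\le \|u^{0,0}\|+\|\widetilde u-u^*(x^0)\|+r_u$, the choice of $Q_0$ forces $\|u^0-\widetilde u\|\le \min\{5L_1/(4L_{g,2}),\,\sqrt{L_1}/(2\mu)\}$, provided the perturbation $\|\widetilde u-u^*(x^0)\|$ is controlled by $(L_1/\mu^2)\,\|\nabla g(x^0,y^0)\|$, which is the term that appears inside the $Q_0$ formula.

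To close the loop, I would bound that perturbation via the resolvent identity. Writing $A=\nabla_{22}^2 g(x^0,y^0)$, $B=\nabla_{22}^2 g(x^0,y^*(x^0))$, $a=\nabla_2 f(x^0,y^0)$, $b=\nabla_2 f(x^0,y^*(x^0))$, the decomposition $A^{-1}a-B^{-1}b = A^{-1}(a-b)+A^{-1}(B-A)B^{-1}b$, combined with $\|A^{-1}\|_{\textrm{op}},\|B^{-1}\|_{\textrm{op}}\le 1/\mu$, the Lipschitz bounds on $\nabla f$ and $\nabla_{22}^2 g$, and $\|b\|\le L_{f,0}$, gives
\[
\|\widetilde u-u^*(x^0)\|\le \frac{L_{f,1}}{\mu}\|y^0-y^*(x^0)\| + \frac{L_{g,2}L_{f,0}}{\mu^2}\|y^0-y^*(x^0)\| = \frac{L_1}{\mu}\|y^0-y^*(x^0)\|,
\]
and using strong convexity once more, $\|y^0-y^*(x^0)\|\le \|\nabla g(x^0,y^0)\|/\mu$, which delivers exactly the required $(L_1/\mu^2)\|\nabla g(x^0,y^0)\|$. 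Substituting the first-paragraph bound on $\|y^0-y^*(x^0)\|$ then yields $\|\widetilde u-u^*(x^0)\|\le \min\{\sqrt{L_1}/(2\mu),\,L_1/(2L_{g,2})\}$, and combining with the inner-loop bound via the triangle inequality gives $\|u^0-u^*(x^0)\|\le \min\{5L_1/(2L_{g,2}),\,\sqrt{L_1}/\mu\}$, since $5/4+1/2\le 5/2$ and $1/2+1/2=1$.

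The main obstacle will be the resolvent-identity step: the decomposition itself is routine, but one has to match the constants $L_{f,1}, L_{g,2}, L_{f,0}, \mu$ so that they collapse \emph{exactly} into $L_1/\mu$, otherwise the chain of $\min$-inequalities at the very end would fall short of the stated bounds by a constant factor. Everything else is a bookkeeping exercise around the geometric-rate contraction of GD on a strongly convex smooth objective.
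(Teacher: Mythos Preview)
Your proposal is correct and follows essentially the same approach as the paper: the GD contraction for $y^0$, the introduction of the auxiliary $\widetilde u = u^*(x^0,y^0)$, the GD contraction for the $u$-loop, and the triangle-inequality split all match. The only cosmetic difference is that the paper derives the perturbation bound $\|\widetilde u - u^*(x^0)\|\le (L_1/\mu)\|y^0-y^*(x^0)\|$ by multiplying through by $\nabla_{22}^2 g(x^0,y^0)$ and invoking $\mu$-strong convexity, whereas you phrase the identical computation as the resolvent identity $A^{-1}a-B^{-1}b=A^{-1}(a-b)+A^{-1}(B-A)B^{-1}b$; both yield the same constants.
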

\begin{proof}
	For $ y^{n+1,0} = y^{n,0} - \beta_0 \nabla_2 g(x^0,y^{n,0}) $, we have
	\begin{align*}
	&\left\| y^{n+1,0} - y^*(x^0) \right\|^2 = \left\| y^{n,0} - \beta_0 \nabla_2 g(x^0, y^{n,0}) - y^*(x^0) \right\|^2 \\
	= &\| y^{n,0} - y^*(x^0) \|^2 - 2\beta_0 \left\langle y^{n,0} - y^*(x^0), \nabla_2 g(x^0, y^{n,0}) \right\rangle + \beta_0^2 \| \nabla_2 g(x^0, y^{n,0}) \|^2.
	\end{align*}
	Since $\nabla_2 g(x^0, y^*(x^0)) = 0$, Assumption \ref{asp:base} and Theorem 2.1.12 in \cite{nesterov2018lectures} implies that
	\begin{align*}
	&\left\langle y^{n,0} - y^*(x^0), \nabla_2 g(x^0, y^{n,0}) \right\rangle 
	= \left\langle y^{n,0} - y^*(x^0), \nabla_2 g(x^0, y^{n,0}) - \nabla_2 g(x^0, y^*(x^0)) \right\rangle \\
	\geq &\frac{\mu L_{g,1}}{\mu + L_{g,1}} \| y^{n,0} - y^*(x^0) \|^2 + \frac{1}{\mu + L_{g,1}} \| \nabla_2 g(x^0, y^{n,0}) \|^2.
	\end{align*}
	Since $\beta_0 \leq \frac{1}{L_{g,1}}\leq \frac{2}{\mu + L_{g,1}}$, we have
	\begin{align}
	\left\| y^{n+1,0} - y^*(x^0) \right\|^2 
	\leq \left( 1 - 2\beta_0 \frac{\mu L_{g,1}}{\mu + L_{g,1}} \right) \| y^{n,0} - y^*(x^0) \|^2 \leq (1 - \beta_0 \mu) \| y^{n,0} - y^*(x^0) \|^2. \label{eq:y-gd}
	\end{align}
	Then we can deduce that $$ \left\| y^{N_0,0} - y^*(x^0) \right\|  \leq (1 - \beta_0 \mu)^{N_0/2} \| y^{0,0} - y^*(x^0) \| \leq (1 - \beta_0 \mu)^{N_0/2} \frac{1}{\mu} \|\nabla g(x^0,y^{0,0})\| \leq \min \big\{\frac{1}{2 \sqrt{L_1}} , \frac{\mu}{2 L_{g,2}}\big\},$$
	where the second inequality follows from 
	\begin{align}
		 \| y^{0,0} - y^*(x^0) \| \leq \frac{1}{\mu}\|\nabla g(x^0,y^{0,0})- \nabla g(x^0,y^*(x^0))\| = \frac{1}{\mu}\|\nabla g(x^0,y^{0,0})\| \label{eq:y00}
	\end{align}
	by $ \mu $-strong convexity of $ g (x,\cdot) $ and the optimality of $ y^*(x^0) $. Next, recall that $ u^*(x,y) = [\nabla_{22}^2 g(x,y)]^{-1} \nabla_2 f(x,y) $, the difference between $ u^*(x) $ and $ u^*(x,y) $ is 
	\begin{align}
	&\mu \|u^* (x,y)-u^*(x)\|\leq \|\nabla^2_{22} g(x,y)(u^* (x,y)-u^*(x))\| \nonumber\\
	\leq&\| \nabla_2 f(x,y)- \nabla_2 f(x,y^*(x))\|+ \|u^*(x)\|\|\nabla^2_{22} g(x,y^*(x)) - \nabla^2_{22} g(x,y) \|\nonumber\\
	\leq & (L_{f,1} + r_u L_{g,2}) \|y - y^*(x)\| = L_1 \|y - y^*(x)\| \label{eq:diff-u*},
	\end{align}
	where $ \|u^*(x)\| \leq \frac{L_{f,0}}{\mu} = r_u $ by definition, the second inequality is because $\nabla^2_{22} g(x,y^* (x))u^*(x)=\nabla_2 f(x,y^* (x))$ and $\nabla^2_{22} g(x,y)u^*(x,y)=\nabla_2 f(x,y)$.
    For $ u^{q+1,0} = u^{q,0} - \beta_0 \big(\nabla^2_{22} g(x^0,y^0)u^{q,0} - \nabla_2 f(x^0,y^0)\big) $, by $ \mu $-strong convexity of $ g(x,\cdot) $ and $ \beta_0 \leq \frac{1}{L_{g,1}} $, we have
	\begin{align}
	&\left\| u^{q+1,0} - u^*(x^0, y^0) \right\| = \left\| u^{q,0} - \beta_0\big(\nabla^2_{22} g(x^0,y^0)u^{q,0} - \nabla_2 f(x^0,y^0)\big) - u^*(x^0,y^0)\right\| \nonumber\\
	\leq & \left\| I - \beta_0 \nabla^2_{22} g(x^0,y^0)\right\|_{\text{op}}\left\|u^{q,0} -  u^*(x^0,y^0)\right\| \leq (1 - \beta_0 \mu) \| u^{q,0} - u^*(x^0, y^0) \|. \label{7eq}
	\end{align}
	So we can deduce that
	\begin{align*}
	&\left\| u^{Q_0,0} - u^*(x^0) \right\| = \left\| u^{Q_0,0} - u^*(x^0, y^0)\right\| + \left\|u^*(x^0,y^0) - u^*(x^0)\right\| \\
	\overset{\eqref{7eq},\eqref{eq:diff-u*}}{\leq} & (1 - \beta_0 \mu)^{Q_0} \| u^{0,0} - u^*(x^0, y^0) \| + \frac{L_1}{\mu} \|y^0 - y^*(x^0)\|\\
	\leq & (1 - \beta_0 \mu)^{Q_0} \big(\| u^{0,0}\| + \|u^*(x^0, y^0) \|\big) + \frac{L_1}{\mu} \min \big\{\frac{1}{2 \sqrt{L_1}} , \frac{\mu}{2 L_{g,2}}\big\} \leq \min \big\{\frac{5 L_1}{2 L_{g,2}}, \frac{\sqrt{L_1}}{\mu}\big\},
	\end{align*}
	where
	\begin{align}
	\|u^*(x^0, y^0)\| \leq \|u^*(x^0, y^0) - u^*(x^0)\| + \|u^*(x^0)\| \overset{\eqref{eq:diff-u*}}{\leq} \frac{L_1}{\mu} \|y^0 - y^*(x^0)\| + r_u \leq \frac{L_1}{\mu^2} \nabla g(x^0,y^0) + r_u. \label{eq:u00-bound}
	\end{align}  
	Hence we complete the proof.
\end{proof}

\begin{lemma}\label{lem:initial-point-sto}
	Suppose Assumptions \ref{asp:base}, \ref{asp:sto} hold, when we take the bachsize
	\begin{align}
	|B_0|& \geq \max \big\{ \frac{8 \beta_0 \sigma_{g,2}^2}{\mu},\frac{\sigma_{g, 1}^2 \beta_0}{\mu \min \{\mu^2/(40rL_{g,2}^2), 1/(8L_1)\}}, \frac{32 \beta_0 \sigma_{g, 1}^2 \big(\frac{L_1}{\mu^2} \|\nabla g(x^0,y^0)\|+r_u\big)^2}{\mu \min \{L_1^2/(5rL_{g,2}^2), L_1/(4 \mu^2)\}}\big\} =\Theta (\kappa^5), \nonumber\\
    |B'_0| & \geq \Theta(1)\geq \frac{16 \beta_0\sigma_{f, 1}^2}{\mu \min \{L_1^2/(5rL_{g,2}^2), L_1/(4 \mu^2)\}}, \label{eq:B0}
	\end{align}
	then $x^0, y^0, u^0 $ in BOX 2 satisfy $\E\left[\|y^0-y^*(x^0)\|^2\right] \leq \min \big\{\frac{\mu^2}{20 r L_{g,2}^2}, \frac{1}{4 L_1}\big\}$ and $ \E\left[\|u^0-u^* (x^0)\|^2\right] \leq \min\big\{\frac{4 L_1^2}{5 r L_{g,2}^2}, \frac{L_1}{\mu^2}\big\} $.
\end{lemma}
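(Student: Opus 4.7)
The plan is to mirror the deterministic argument of Lemma \ref{lem:initial-point-deter}, keeping the same structural recursion but tracking expected squared errors and absorbing the stochastic noise via the batch sizes prescribed in \eqref{eq:B0}.

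For the $y$-iteration, I would decompose $\nabla_2 G(x^0, y^{n,0}; B_{0,n}) = \nabla_2 g(x^0, y^{n,0}) + \epsilon_n$, where $\epsilon_n$ is zero-mean conditional on $y^{n,0}$ with $\E[\|\epsilon_n\|^2\mid\mathcal{F}_n] \leq \sigma_{g,1}^2/|B_0|$. Expanding $\|y^{n+1,0}-y^*(x^0)\|^2$, the deterministic cross term yields the contraction \eqref{eq:y-gd} and the noise cross term vanishes in expectation, giving
\begin{equation*}
\E[\|y^{n+1,0}-y^*(x^0)\|^2\mid\mathcal{F}_n] \leq (1-\beta_0\mu)\|y^{n,0}-y^*(x^0)\|^2 + \tfrac{\beta_0^2 \sigma_{g,1}^2}{|B_0|}.
\end{equation*}
Iterating $N_0$ times and summing the geometric series produces
\begin{equation*}
\E[\|y^{N_0,0}-y^*(x^0)\|^2] \leq (1-\beta_0\mu)^{N_0}\E[\|y^{0,0}-y^*(x^0)\|^2] + \tfrac{\beta_0 \sigma_{g,1}^2}{\mu|B_0|}.
\end{equation*}
Applying \eqref{eq:y00} to bound the initial error by $\|\nabla g(x^0,y^{0,0})\|^2/\mu^2$, the prescribed $N_0$ forces the geometric term below half the target $\min\{\mu^2/(20rL_{g,2}^2),\, 1/(4L_1)\}$, while the second condition in \eqref{eq:B0} drives the variance term below the other half.

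For the $u$-iteration around $u^*(x^0,y^0) = [\nabla_{22}^2 g(x^0,y^0)]^{-1}\nabla_2 f(x^0,y^0)$, the fixed-point identity lets me rewrite
\begin{equation*}
u^{q+1,0}-u^*(x^0,y^0) = (I-\beta_0\nabla_{22}^2 g(x^0,y^0))(u^{q,0}-u^*(x^0,y^0)) - \beta_0 E_q u^{q,0} + \beta_0 \eta_q,
\end{equation*}
with $E_q := \nabla_{22}^2 G(x^0,y^0;B_{0,q}) - \nabla_{22}^2 g(x^0,y^0)$ and $\eta_q := \nabla_2 F(x^0,y^0;B'_{0,q}) - \nabla_2 f(x^0,y^0)$ both zero-mean, independent, with variances bounded by $\sigma_{g,2}^2/|B_0|$ and $\sigma_{f,1}^2/|B'_0|$. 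Writing $u^{q,0}=(u^{q,0}-u^*(x^0,y^0))+u^*(x^0,y^0)$ and squaring, the three mean-zero contributions drop out in conditional expectation and the $\beta_0^2\|E_q\|_{\mathrm{op}}^2$ piece is absorbed into the contraction via the first lower bound on $|B_0|$ in \eqref{eq:B0}, giving
\begin{equation*}
\E[\|u^{q+1,0}-u^*(x^0,y^0)\|^2\mid\mathcal{F}_q] \leq (1-\beta_0\mu/2)\|u^{q,0}-u^*(x^0,y^0)\|^2 + \beta_0^2\Bigl(\tfrac{\sigma_{g,2}^2 \|u^*(x^0,y^0)\|^2}{|B_0|} + \tfrac{\sigma_{f,1}^2}{|B'_0|}\Bigr).
\end{equation*}
Iterating $Q_0$ times, invoking the a priori bound \eqref{eq:u00-bound} to replace $\|u^*(x^0,y^0)\|$ by $\tfrac{L_1}{\mu^2}\|\nabla g(x^0,y^0)\|+r_u$, the geometric-sum tail is controlled by the third entry of \eqref{eq:B0} and by the bound on $|B'_0|$. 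The prescribed $Q_0$, together with \eqref{eq:u00-bound} applied to the initial offset, forces $\E[\|u^{Q_0,0}-u^*(x^0,y^0)\|^2] \leq \tfrac14\min\{4L_1^2/(5rL_{g,2}^2),\, L_1/\mu^2\}$.

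Finally I close the loop by transferring from $u^*(x^0,y^0)$ to $u^*(x^0)$: the triangle inequality combined with \eqref{eq:diff-u*} yields $\|u^*(x^0,y^0)-u^*(x^0)\|^2 \leq (L_1/\mu)^2\|y^0-y^*(x^0)\|^2$, whose expectation is controlled by the bound on $\E[\|y^0-y^*(x^0)\|^2]$ established in the first step. Summing the two contributions gives the claimed bound on $\E[\|u^0-u^*(x^0)\|^2]$. The main obstacle is the $u$-iteration, since its additive noise scales with $\|u^{q,0}\|$ rather than being uniformly bounded; the key observation that makes the proof go through is that $\|u^*(x^0,y^0)\|$ admits the explicit a priori bound \eqref{eq:u00-bound} depending only on the known quantity $\|\nabla g(x^0,y^0)\|$, which is precisely why this quantity appears inside the batch-size condition of \eqref{eq:B0}; the independence of $B_{0,q}$ and $B'_{0,q}$ further ensures that the cross term between the Hessian and gradient noises vanishes, keeping the recursion clean.
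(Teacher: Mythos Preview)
Your proposal is correct and follows essentially the same route as the paper: the $y$-recursion is the standard SGD-on-strongly-convex contraction plus variance term, the $u$-recursion anchors at $u^*(x^0,y^0)$ and absorbs the Hessian-noise contribution $\beta_0^2\sigma_{g,2}^2/|B_0|\cdot\|u^{q,0}-u^*(x^0,y^0)\|^2$ into the contraction via the first entry of \eqref{eq:B0}, and the final transfer to $u^*(x^0)$ uses \eqref{eq:diff-u*} together with the $y$-bound already established. The one inessential difference is that the paper does not rely on independence of $B_{0,q}$ and $B'_{0,q}$ to kill the Hessian--gradient cross term; it simply applies $\|a+b\|^2\le 2\|a\|^2+2\|b\|^2$ to the combined noise, at the cost of a harmless constant factor.
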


\begin{proof}
	Similar to Lemma \ref{lem:initial-point-deter}, we can deduce that 
	\begin{align*}
	\|y^{n,0} - \beta_0 \nabla_2 g(x^0, y^{n,0}) - y^*(x^0)\|^2 \leq (1-\mu\beta_0) \|y^{n,0}  - y^*(x^0)\|^2.
	\end{align*}
	Then denote a conditional expectation $ \E_y^n:= \E\left[\cdot\  | x^0,y^{n,0}\right] $, for $ y^{n+1,0} = y^{n,0} - \beta_0 \nabla_2 G(x^0,y^{n,0};B_0) $, we have
	\begin{align}
	\E_y^n\left[\|y^{n+1,0} - y^*(x^0)\|^2\right]& = \|y^{n,0} - \beta_0 \nabla_2 g(x^0, y^{n,0}) - y^*(x^0)\|^2 + \beta_0^2 \E_y^n\left[\|\nabla_2 g(x^0, y^{n,0}) - \nabla_2 G(x^0,y^{n,0};B_0)\|^2\right] \nonumber\\
	&\leq (1-\mu\beta_0) \|y^{n,0}  - y^*(x^0)\|^2 + \beta_0^2 \frac{\sigma_{g,1}^2}{|B_0|}, \label{12eq}
	\end{align}
	take expectation and by induction we obtain
	\begin{align*}
	\E\left[\| y^{N_0,0} - y^*(x^0)\|^2\right] \leq (1-\mu\beta_0)^{N_0} \|y^{0,0}  - y^*(x^0)\|^2 + \beta_0 \frac{\sigma_{g,1}^2}{\mu|B_0|}\leq \min \big\{\frac{\mu^2}{20 r L_{g,2}^2}, \frac{1}{4 L_1}\big\},
	\end{align*}
	where $ \|y^{0,0}  - y^*(x^0)\|\leq  \frac{1}{\mu}\|\nabla g(x^0,y^{0,0})\|$ from \eqref{eq:y00}.
	Next, from \eqref{7eq} we have
	\begin{align*}
	\left\| u^{q,0} - \beta_0\big(\nabla^2_{22} g(x^0,y^0)u^{q,0} - \nabla_2 f(x^0,y^0)\big) - u^*(x^0,y^0)\right\|\leq (1 - \beta_0 \mu) \| u^{q,0} - u^*(x^0, y^0) \|,
	\end{align*}
	denote $ \E_u^q:= \E\left[\cdot\  | x^0,y^0,u^{q,0}\right] $, we have
	\begin{align*}
	&\E_u^q\left[\|u^{q+1,0} - u^*(x^0,y^0)\|^2\right] =  \left\|u^{q,0} - \beta_0\big(\nabla^2_{22} g(x^0,y^0)u^{q,0} - \nabla_2 f(x^0,y^0)\big) - u^*(x^0,y^0)\right\|^2\\
	& + \beta_0^2 \E_u^q\left[\|\nabla^2_{22} g(x^0,y^0)u^{q,0} - \nabla_2 f(x^0,y^0) - \nabla^2_{22} G(x^0,y^0;B_0)u^{q,0} + \nabla_2 F(x^0,y^0;B_0)\|^2\right]\\
	\leq& (1 - \beta_0 \mu)^2 \| u^{q,0} - u^*(x^0, y^0) \|^2 + 2\beta_0^2 \frac{\sigma_{f,1}^2}{|B'_0|} + 4\beta_0^2 \frac{\sigma_{g,2}^2}{|B_0|} \| u^{q,0}-u^*(x^0,y^0)\|^2+ 4\beta_0^2 \frac{\sigma_{g,2}^2}{|B_0|}\|u^*(x^0,y^0)\|^2,\\
	\leq &  (1 - \beta_0 \mu/2) \| u^{q,0} - u^*(x^0, y^0) \|^2 + 2\beta_0^2 \frac{\sigma_{f,1}^2}{|B'_0|} + 4\beta_0^2 \frac{\sigma_{g,1}^2}{|B_0|}\|u^*(x^0,y^0)\|^2,
	\end{align*}
	where the last inequality is because $ |B_0| \geq \frac{8 \beta_0 \sigma_{g,2}^2}{\mu} $. Then taking conditional expetation $ \E' = \E[\cdot|x^0,y^0] $ on both side of the inequality and by induction,
	\begin{align*}
	\E'\left[\|u^{Q_0,0} - u^*(x^0,y^0)\|^2\right] \leq   (1 - \beta_0 \mu/2)^{Q_0} \| u^{0,0} - u^*(x^0, y^0) \|^2 + 4\beta_0 \frac{\sigma_{f,1}^2}{\mu|B'_0|} + 8\beta_0\frac{\sigma_{g,1}^2}{\mu|B_0|}\|u^*(x^0,y^0)\|^2.
	\end{align*}
	Combining with \eqref{eq:diff-u*} and \eqref{eq:u00-bound}, we obtain
	\begin{align*}
	&\E'\left[\|u^{Q_0,0} - u^*(x^0)\|^2\right] \leq 2\E'\left[\|u^{Q_0,0} - u^*(x^0,y^0)\|^2\right] + 2\|u^*(x^0) - u^*(x^0,y^0)\|^2\\
	\leq &2(1 - \beta_0 \mu/2)^{Q_0} \big(\| u^{0,0}\| + \|u^*(x^0, y^0) \|\big)^2 + 8\beta_0 \frac{\sigma_{f,1}^2}{\mu|B'_0|} + 16\beta_0 \frac{\sigma_{g,1}^2}{\mu|B_0|}\|u^*(x^0,y^0)\|^2 + \frac{2L_1^2}{\mu^2}\E\left[\|y^0 - y^*(x^0)\|^2\right]\\
	\leq & 2(1 - \beta_0 \mu/2)^{Q_0} \big(\| u^{0,0}\| + \frac{L_1}{\mu^2}\|\nabla g(x^0,y^0)\| + r_u\big)^2\\
    & + 8\beta_0 \frac{\sigma_{f,1}^2}{\mu|B'_0|} + 16\beta_0 \frac{\sigma_{g,1}^2}{\mu|B_0|}\big(\frac{L_1}{\mu^2}\|\nabla g(x^0,y^0)\| + r_u\big)^2 + \frac{2L_1^2}{\mu^2}\E\left[\|y^0 - y^*(x^0)\|^2\right]
	\leq \min\big\{\frac{4 L_1^2}{5 r L_{g,2}^2}, \frac{L_1}{\mu^2}\big\},
	\end{align*}
	then we complete the proof.
\end{proof}

\begin{remark}
From Lemma \ref{lem:initial-point-deter}, we deduce that the initialization in BOX 1 requires $O(\kappa \log \kappa)$ gradient computations and Hessian-vector products. Similarly, Lemma \ref{lem:initial-point-sto} indicates that the initialization in BOX 2 has a gradient complexity of $O(\kappa^{6} \log \kappa)$ for $g$, $O(\kappa \log \kappa)$ for $f$, and a Hessian-vector product complexity of $O(\kappa^{6} \log \kappa)$.
\end{remark}

\section{Detailed Proofs}
\label{app:proofs}

In this section of the Appendix, we provide detailed proofs of Theorem \ref{thm:deter}, Theorem \ref{thm:sto}, and Remark \ref{rmk:CG}. Recall that
\begin{align}
&d_x^k  := \nabla_1 f ( x^k, y^k)-\nabla_{12}^2 g (x^k, y^k) u^k,\ D_x^k := \nabla_1 F ( x^k, y^k; B_3^k )-\nabla_{12}^2 G (x^k, y^k; B_4^k ) u^k ,\label{eq:dx}\\
&d_y^k  := \nabla_2 g ( x^k, y^k),\ D_y^k = \nabla_2 G(x^k, y^k ; B_2^k),\label{eq:dy}\\
&d_u^k  := \nabla_{22}^2 g (x^k,y^k)u^k - \nabla_2 f(x^k,y^k),\ D_u^k := \nabla_{22}^2 G (x^k,y^k;B_1^k)u^k - \nabla_2 F(x^k,y^k;B_3^k). \label{eq:du}
\end{align}
For convenience, we further define 
\begin{align}
&r_u  := L_{f,0}/\mu, \  L_u:=\left(L_{f,1}+L_{g,2}r_u\right)\left(1+ L_{g,1}/\mu \right),\ L_1  :=L_{f,1}+L_{g,2}r_u, \  L_2 :=C_{f,0}+L_{g,1}r_u, \label{eq:constant-app}\\
& w^*(x^k,y^k,u^k) = u^k - [\nabla^2_{22} g (x^k, y^k)]^{-1} \nabla_2 f (x^k, y^k),\nonumber\\
& d_v^{t,k}  := \nabla^2_{22} g (x^k, y^k) v^{t,k}-\nabla_2 g (x^k, y^k),\ D_v^{t,k} := \nabla_{22}^2 G (x^k, y^k; B_1^{t,k} ) v^{t,k} - \nabla_2 G( x^k, y^k; B_2^k ) ,\label{eq:dv}\\
& d_w^{t,k}  := \nabla^2_{22} g (x^k, y^k) w^{t,k}-\nabla^2_{22} g (x^k, y^k) u^k + \nabla_2 f (x^k, y^k),\nonumber\\
& D_w^{t,k} := \nabla^2_{22} G (x^k, y^k; B_1^{t,k}) w^{t,k}-\nabla^2_{22} G (x^k, y^k; B_1^k) u^k + \nabla_2 F (x^k, y^k; B_3),\label{eq:dw}\\
&\E^k [\cdot]:= \E [\cdot | x^k,y^k,u^k],\ \E^{t,k} [\cdot]= \E [\cdot | x^k,y^k,u^k,v^{t,k},w^{t,k}],\nonumber\\
&S_{g,1} := \frac{\sigma_{g,1}^2}{|B_2^k|},\ S^{g,2}_1 := \frac{\sigma_{g,2}^2}{|B_1^{t,k}|},\
S^{g,2}_2 := \frac{\sigma_{g,2}^2}{|B_1^k|},\  S^{g,2}_3 := \frac{\sigma_{g,2}^2}{|B_4^k|},\  S_{f,1} := \frac{\sigma_{f,1}^2}{|B_3^k|}.\label{eq:def-S}
\end{align}

The following lemma is fundamental and will be repeatedly invoked throughout the proof.

\begin{lemma}
	Suppose Assumption \ref{asp:base} holds, we have
	\begin{align}\label{eq:y*-and-u*}
	\|y^*(x')-y^*(x)\| \leq \frac{L_{g,1}}{\mu} \|x'-x\|
	\text{~~and~~}
	\|u^*(x')-u^*(x)\| \leq \frac{L_u}{\mu} \|x'-x\|.
	\end{align}
\end{lemma}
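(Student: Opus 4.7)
The plan is to handle the two Lipschitz estimates separately, starting with the bound on $y^*(\cdot)$ and then recycling it as an ingredient for the bound on $u^*(\cdot)$. For the first inequality I would exploit the first-order optimality condition $\nabla_2 g(x, y^*(x)) = 0$, which holds for every $x$ by Assumption \ref{asp:base}(a). The $\mu$-strong convexity of $g(x', \cdot)$ gives
$$\mu \|y^*(x) - y^*(x')\|^2 \leq \langle \nabla_2 g(x', y^*(x)) - \nabla_2 g(x', y^*(x')), y^*(x) - y^*(x')\rangle,$$
and using $\nabla_2 g(x, y^*(x)) = \nabla_2 g(x', y^*(x')) = 0$ the right-hand side simplifies to $\langle \nabla_2 g(x', y^*(x)) - \nabla_2 g(x, y^*(x)), y^*(x) - y^*(x')\rangle$. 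Cauchy--Schwarz combined with the $L_{g,1}$-Lipschitz continuity of $\nabla g$ from Assumption \ref{asp:base}(b) then yields the claimed factor $L_{g,1}/\mu$ after dividing by $\|y^*(x) - y^*(x')\|$.

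For the second inequality the plan is to use the closed-form $u^*(x) = [\nabla^2_{22} g(x, y^*(x))]^{-1} \nabla_2 f(x, y^*(x))$. Write $H(x) := \nabla^2_{22} g(x, y^*(x))$ and $F(x) := \nabla_2 f(x, y^*(x))$, and apply the matrix identity $A^{-1} - B^{-1} = A^{-1}(B - A)B^{-1}$ to split
$$u^*(x') - u^*(x) = H(x')^{-1}\bigl(F(x') - F(x)\bigr) + H(x')^{-1}\bigl(H(x) - H(x')\bigr) u^*(x).$$
The operator norm $\|H(x')^{-1}\|_{\mathrm{op}}$ is at most $1/\mu$ by Assumption \ref{asp:base}(a); the bound $\|u^*(x)\| \leq L_{f,0}/\mu = r_u$ follows from Assumption \ref{asp:base}(c) applied to $\nabla_2 f(x, y^*(x))$; and Assumptions \ref{asp:base}(b)--(c) along the composite map $x \mapsto (x, y^*(x))$ give $\|F(x') - F(x)\| \leq L_{f,1}(\|x' - x\| + \|y^*(x') - y^*(x)\|)$ and $\|H(x') - H(x)\| \leq L_{g,2}(\|x' - x\| + \|y^*(x') - y^*(x)\|)$. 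Substituting the previously established estimate $\|y^*(x') - y^*(x)\| \leq (L_{g,1}/\mu)\|x' - x\|$ collapses everything into a common factor $(1 + L_{g,1}/\mu)\|x' - x\|$, and collecting constants recovers exactly $L_u/\mu = (L_{f,1} + L_{g,2} r_u)(1 + L_{g,1}/\mu)/\mu$.

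Neither step is expected to pose a real obstacle; the only care required is bookkeeping in the second part so that the final constant matches the paper's definition of $L_u$ in \eqref{eq:constant-app}. This is essentially the same argument used in Lemma 2.2 of \citet{ghadimi2018approximation}, and the complete proof should fit in well under a page.
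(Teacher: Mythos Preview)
Your proposal is correct and essentially mirrors the paper's proof: both exploit the optimality condition $\nabla_2 g(x,y^*(x))=0$ together with strong convexity and the $L_{g,1}$-Lipschitz gradient for the first bound, and both decompose $u^*(x')-u^*(x)$ into a ``change in $F$'' term plus a ``change in $H$ acting on $u^*$'' term, bounding each via $\|H^{-1}\|_{\mathrm{op}}\le 1/\mu$, $\|u^*\|\le r_u$, and the Lipschitz constants $L_{f,1},L_{g,2}$. The only cosmetic differences are that the paper applies strong convexity as $\mu\|y-y'\|\le\|\nabla_2 g(x,y)-\nabla_2 g(x,y')\|$ rather than via the inner-product inequality, and it multiplies through by $H$ instead of invoking the $A^{-1}-B^{-1}$ identity; these are algebraically equivalent routes to the same estimate.
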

\begin{proof}
	From the definition of $ u^*(x) $, by Assumption \ref{asp:base}, we have $ \|u^*(x)\| \leq \|[\nabla_{22}^2 g(x,y^*(x))]^{-1}\|_{\text{op}}\|\nabla_2 f(x,y^*(x))\| \leq \frac{L_{f,0}}{\mu}=r_u$. Due to the optimality of $y^*(x)$, we have $\nabla_2 g(x,y^*(x))=0$. Since $ g(x,\cdot) $ is $ \mu $-strongly convex and $ g(\cdot,y) $ is $ L_{g,1} $-smooth, we can deduce that
	\begin{align*}
	\mu\|y^*(x)-y^*(x')\|&\leq \|\nabla_2 g(x,y^*(x))-\nabla_2 g(x,y^*(x'))\|
	\\&=  \|\nabla_2 g(x,y^*(x'))-\nabla_2 g(x',y^*(x'))\|\leq L_{g,1}\|x-x'\|,
	\end{align*}
	which implies the first inequality in \eqref{eq:y*-and-u*}.
	Next, by the strong convexity of $ g(x,\cdot) $ and smoothness of $ g,f $, we have
	\begin{align}
	\|\nabla^2_{22} g(x,y^* (x))(u^* (x)-u^*(x'))\| \geq \mu \|u^* (x)-u^*(x')\|, \label{pro0}
	\end{align}
	and
	\begin{align}
	&\big\|\nabla_2 f(x,y^* (x)) - \nabla_2 f(x',y^* (x')) + \big[\nabla^2_{22} g(x',y^* (x')) - \nabla^2_{22} g(x,y^* (x))\big]u^*(x')\big\|\nonumber\\
	&\leq \big(L_{f,1}+ L_{g,2}r_u\big)\big(\|x' - x\| + \|y^*(x')-y^*(x)\|\big). \label{pro1}
	\end{align}
	By the definition of $ u^*(x) $, the following equality holds
	\begin{align*}
	&\nabla^2_{22} g(x,y^* (x))(u^* (x)-u^*(x')) 
	= \nabla_2 f(x,y^* (x)) - \nabla^2_{22} g(x,y^* (x))u^*(x') \\
	& \quad + \nabla^2_{22} g(x',y^* (x'))u^*(x') - \nabla_2 f(x',y^* (x')).
	\end{align*}
	Taking the norm on both sides of the above inequality and combining \eqref{pro0}, \eqref{pro1}, we can deduce the second inequlity in \eqref{eq:y*-and-u*}.
\end{proof}

\subsection{Detailed Proof of Theorem \ref{thm:deter} and Remark \ref{rmk:CG}}
\label{app:thmdeter}

We first establish the descent of $ \Phi(x^k) $, $ \|y^k - y^*(x^k)\|^2 $ and $ \|u^k - u^*(x^k)\|^2 $ respectively.
\begin{lemma}\label{lem:x-descent}
	Suppose Assumption \ref{asp:base} holds, the sequences generated by Algorithm \ref{alg:NBOGD} satisfy
	\begin{align}
	&\Phi (x^{k+1}) \nonumber\\
	\leq &\Phi (x^k) - \frac{\alpha_k}{2}\|\nabla \Phi(x^k)\|^2  - \big(\frac{\alpha_k}{2} - \frac{L_{\Phi} \alpha_k^2}{2}\big)\|d_x^k\|^2 + L_1^2 \alpha_k \|y^k-y^* (x^k)\|^2 + L_{g,1}^2 \alpha_k \|u^k-u^*(x^k)\|^2. \label{eq:phi-descent}
	\end{align}
\end{lemma}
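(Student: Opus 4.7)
The plan is to start from the classical descent inequality for the $L_\Phi$-smooth function $\Phi$ (the Lipschitz constant of $\nabla\Phi$ is already established in the excerpt via the formula for $L_\Phi$ taken from Lemma 2.2 of \citet{ghadimi2018approximation}), which gives
\[
\Phi(x^{k+1}) \leq \Phi(x^k) - \alpha_k\langle \nabla\Phi(x^k), d_x^k\rangle + \frac{L_\Phi \alpha_k^2}{2}\|d_x^k\|^2.
\]
Then I would apply the polarization identity $-\langle a, b\rangle = \tfrac{1}{2}\|a-b\|^2 - \tfrac{1}{2}\|a\|^2 - \tfrac{1}{2}\|b\|^2$ with $a = \nabla\Phi(x^k)$ and $b = d_x^k$ in order to turn the cross term into a bias-like term $\tfrac{\alpha_k}{2}\|\nabla\Phi(x^k) - d_x^k\|^2$ plus the two negative squared-norm terms that appear in the target inequality.

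The remaining task is to control the hypergradient approximation error $\|\nabla\Phi(x^k) - d_x^k\|$, using the exact hypergradient formula \eqref{eq:hypergradient}, the definition of $d_x^k$ in \eqref{eq:dx}, and the formula $u^*(x^k) = [\nabla_{22}^2 g(x^k, y^*(x^k))]^{-1}\nabla_2 f(x^k, y^*(x^k))$. The key step is to split the difference as
\[
\nabla\Phi(x^k) - d_x^k = \bigl[\nabla_1 f(x^k, y^*(x^k)) - \nabla_1 f(x^k, y^k)\bigr] - \bigl[\nabla_{12}^2 g(x^k, y^*(x^k)) - \nabla_{12}^2 g(x^k, y^k)\bigr] u^*(x^k) - \nabla_{12}^2 g(x^k, y^k)\bigl[u^*(x^k) - u^k\bigr].
\]
Using Assumption \ref{asp:base} I bound the three terms respectively by $L_{f,1}\|y^k - y^*(x^k)\|$, $L_{g,2}\|u^*(x^k)\|\|y^k - y^*(x^k)\| \leq L_{g,2} r_u \|y^k - y^*(x^k)\|$ (recalling that $\|u^*(x^k)\|\leq L_{f,0}/\mu = r_u$ from \eqref{eq:constant-app}), and $L_{g,1}\|u^*(x^k) - u^k\|$. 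This yields $\|\nabla\Phi(x^k) - d_x^k\| \leq L_1 \|y^k - y^*(x^k)\| + L_{g,1}\|u^*(x^k) - u^k\|$ with $L_1$ as defined in \eqref{eq:constant-app}.

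Finally, applying $(a+b)^2 \leq 2a^2 + 2b^2$ to this bound produces
\[
\tfrac{\alpha_k}{2}\|\nabla\Phi(x^k) - d_x^k\|^2 \leq L_1^2 \alpha_k \|y^k - y^*(x^k)\|^2 + L_{g,1}^2 \alpha_k \|u^k - u^*(x^k)\|^2,
\]
which, combined with the reorganization from the first step, delivers \eqref{eq:phi-descent} exactly. No step is particularly delicate here; the main care is only in the algebraic splitting of $\nabla\Phi(x^k) - d_x^k$ so that each term can be controlled by a single Lipschitz constant, and in ensuring the $r_u$ bound on $\|u^*(x^k)\|$ is invoked so that the coefficient in front of $\|y^k - y^*(x^k)\|^2$ is the sharp constant $L_1^2$ rather than a looser one.
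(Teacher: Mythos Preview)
Your proposal is correct and follows essentially the same approach as the paper's proof: the descent lemma from $L_\Phi$-smoothness, the polarization identity to extract the bias term $\tfrac{\alpha_k}{2}\|\nabla\Phi(x^k)-d_x^k\|^2$, the three-way splitting of $\nabla\Phi(x^k)-d_x^k$ bounded via $L_{f,1}$, $L_{g,2}r_u$, and $L_{g,1}$, and the final $(a+b)^2\leq 2a^2+2b^2$ step all match the paper line for line.
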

\begin{proof} 
	From Lemma 2.2 in \cite{ghadimi2018approximation} we know that $ \Phi(x) $ is $ L_{\Phi} $-smooth, and $ L_{\Phi} $ is defined in \eqref{eq:def-constants}, then we have
	\begin{align}
	\Phi (x^{k+1}) &\leq \Phi (x^k) + \langle \nabla \Phi(x^k), x^{k+1} - x^k \rangle + \frac{L_{\Phi}}{2} \|x^{k+1} - x^k\|^2 \nonumber\\
	&= \Phi (x^k) - \alpha_k \langle \nabla \Phi(x^k), d_x^k \rangle + \frac{L_{\Phi} \alpha_k^2}{2} \|d_x^k\|^2 \nonumber\\
	&= \Phi (x^k) + \frac{\alpha_k}{2}\|\nabla \Phi(x^k)- d_x^k\|^2 - \frac{\alpha_k}{2}\|\nabla \Phi(x^k)\|^2 - \frac{\alpha_k}{2}\|d_x^k\|^2 + \frac{L_{\Phi} \alpha_k^2}{2} \|d_x^k\|^2, \label{eq:phi-descent0}
	\end{align}
	where 
	\begin{align}
	\|\nabla \Phi(x^k)- d_x^k\| &\leq \big \|\nabla_1 f(x^k, y^* (x^k))-\nabla_1 f(x^k,y^k)\big \|
	+\big\|[\nabla^2_{12} g (x^k,y^k) - \nabla^2_{12} g(x^k, y^* (x^k))]  u^*(x^k)\big\| \nonumber\\
	& \quad +\big\|\nabla^2_{12} g(x^k, y^k)[u^k-u^*(x^k)]\big\| \nonumber\\
	& \leq (L_{f,1} + L_{g,2} r_u) \|y^k-y^* (x^k)\|
	+L_{g,1} \|u^k-u^*(x^k)\|. \label{eq:diff-gradient-x}
	\end{align}
	Then combining  \eqref{eq:diff-gradient-x} with \eqref{eq:phi-descent0}, we can obtain the desired result.
\end{proof}

\begin{lemma}\label{lem:y-and-u-descent}
	Suppose Assumption \ref{asp:base} holds. When we take $ T \geq \Theta(\kappa) $,
	the sequences generated by Algorithm \ref{alg:NBOGD} satisfy
	\begin{align}
	&\|y^{k+1}-y^*(x^{k+1})\| \leq \frac{L_{g,2}}{2\mu} \|y^*(x^k) - y^k\|^2 + \frac{L_{g,1}}{\mu}\|x^{k+1} - x^k\| + \frac{1}{4} \|y^k - y^*(x^k)\|, \label{eq:y-descent1-f}\\
	&\|u^{k+1}-u^*(x^{k+1})\| \leq  \frac{1}{4} \|u^k-u^*(x^k)\| + \frac{5 L_1}{4 \mu}\|y^k-y^* (x^k)\| + \frac{L_u}{\mu}\|x^{k+1} - x^k\| . \label{eq:u-descent-f}
	\end{align}
\end{lemma}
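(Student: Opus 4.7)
The plan is to decompose each error using the exact Newton iterate as an intermediate quantity, and control separately (i) the quadratic Newton residual via \eqref{eq:quadratic}, (ii) the drift of $y^*(\cdot)$ and $u^*(\cdot)$ caused by $x^k \to x^{k+1}$ via \eqref{eq:y*-and-u*}, and (iii) the inexactness of the inner GD subroutine via standard strongly-convex contraction. The constant $1/4$ on the right-hand side of both target inequalities is what forces the inner budget $T = \Theta(\kappa)$: it must be large enough that $(1-\gamma\mu)^{T+1}$ absorbs the $\kappa$-scale factors that arise from $\|v^*\|$ and $\|w^*\|$.

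First I would analyze the subroutine. The problems \eqref{eq:subproblem-v} and \eqref{eq:subprolem-w} are $\mu$-strongly convex quadratics with common Hessian $H_k=\nabla_{22}^2 g(x^k,y^k)$; their exact minimizers are $v^*(x^k,y^k)=H_k^{-1}\nabla_2 g(x^k,y^k)$ and $w^*(x^k,y^k,u^k)=u^k-u^*(x^k,y^k)$. With step size $\gamma\le 1/L_{g,1}$ and initializations $v^{-1,k}=w^{-1,k}=0$, iterating \eqref{eq:v-and-w-update} yields $\|v^k-v^*(x^k,y^k)\|\le(1-\gamma\mu)^{T+1}\|v^*(x^k,y^k)\|$ and the analogous bound for $w^k$. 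Using $\nabla_2 g(x^k,y^*(x^k))=0$ together with $L_{g,1}$-smoothness gives $\|v^*(x^k,y^k)\|\le (L_{g,1}/\mu)\|y^k-y^*(x^k)\|$, while \eqref{eq:diff-u*} and the triangle inequality give $\|w^*(x^k,y^k,u^k)\|\le\|u^k-u^*(x^k)\|+(L_1/\mu)\|y^k-y^*(x^k)\|$. Choosing $T=\Theta(\kappa)$ large enough that $(1-\gamma\mu)^{T+1}L_{g,1}/\mu\le 1/4$ (up to constants absorbed in the $\Theta$) produces
\[
\|v^k-v^*(x^k,y^k)\|\le \tfrac{1}{4}\|y^k-y^*(x^k)\|, \qquad \|w^k-w^*(x^k,y^k,u^k)\|\le \tfrac{1}{4}\|u^k-u^*(x^k)\|+\tfrac{L_1}{4\mu}\|y^k-y^*(x^k)\|.
\]

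For \eqref{eq:y-descent1-f} I would decompose
\[
y^{k+1}-y^*(x^{k+1}) = \bigl(y^k-v^*(x^k,y^k)-y^*(x^k)\bigr) + \bigl(y^*(x^k)-y^*(x^{k+1})\bigr) + \bigl(v^*(x^k,y^k)-v^k\bigr),
\]
noting that $y^k-v^*(x^k,y^k)$ is exactly the one-step Newton iterate. The first term is bounded by $\tfrac{L_{g,2}}{2\mu}\|y^k-y^*(x^k)\|^2$ via \eqref{eq:quadratic}, the second by $\tfrac{L_{g,1}}{\mu}\|x^{k+1}-x^k\|$ via \eqref{eq:y*-and-u*}, and the third by $\tfrac{1}{4}\|y^k-y^*(x^k)\|$ from the subroutine analysis; the triangle inequality delivers \eqref{eq:y-descent1-f}. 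For \eqref{eq:u-descent-f} I would use the identity $u^k-w^*(x^k,y^k,u^k)=u^*(x^k,y^k)$, which implies $u^{k+1}=u^*(x^k,y^k)+(w^*-w^k)$, and decompose
\[
u^{k+1}-u^*(x^{k+1}) = \bigl(u^*(x^k,y^k)-u^*(x^k)\bigr) + \bigl(u^*(x^k)-u^*(x^{k+1})\bigr) + \bigl(w^*(x^k,y^k,u^k)-w^k\bigr).
\]
By \eqref{eq:diff-u*} the first term is at most $\tfrac{L_1}{\mu}\|y^k-y^*(x^k)\|$, by \eqref{eq:y*-and-u*} the second is at most $\tfrac{L_u}{\mu}\|x^{k+1}-x^k\|$, and the third is at most $\tfrac{1}{4}\|u^k-u^*(x^k)\|+\tfrac{L_1}{4\mu}\|y^k-y^*(x^k)\|$; summing gives the claimed coefficient $5L_1/(4\mu)$ on $\|y^k-y^*(x^k)\|$ in \eqref{eq:u-descent-f}.

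The main obstacle is calibrating a single $T$ of order $\kappa$ that simultaneously shrinks both inner errors to the same $1/4$-prefactor, which requires tracking that the inflations of $\|v^*\|$ and $\|w^*\|$ by the problem data (involving $L_{g,1}/\mu$ and $L_1/\mu$) are dominated by the geometric contraction. The conceptual payoff is the quadratic-in-$\|y^k-y^*(x^k)\|$ leading term in \eqref{eq:y-descent1-f}, inherited from the exact Newton update; this quadratic decay, coupled with the initialization BOX 1, is what later enables the improved complexity of NBO-GD over purely first-order schemes.
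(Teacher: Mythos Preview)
Your decomposition and the $u$-analysis are essentially what the paper does, but the $y$-analysis has a quantitative gap that prevents you from reaching the stated $T=\Theta(\kappa)$. You bound $\|v^*(x^k,y^k)\|\le (L_{g,1}/\mu)\|y^k-y^*(x^k)\|$ and then require $(1-\gamma\mu)^{T+1}\cdot L_{g,1}/\mu\le 1/4$. But $L_{g,1}/\mu=\kappa$ is the condition number, not a constant that can be ``absorbed in the $\Theta$'': to make $(1-\gamma\mu)^{T+1}\kappa\le 1/4$ with $\gamma\mu=\Theta(1/\kappa)$ you need $T\ge\Theta(\kappa\log\kappa)$, which is strictly larger than the $\Theta(\kappa)$ claimed in the lemma.

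The paper sidesteps this by tracking the shifted iterate $y^k-v^{t,k}-y^*(x^k)$ directly rather than $v^{t,k}-v^*$. The recursion for this quantity contracts by $(1-\mu\gamma_k)$ and picks up the quadratic Newton residual $\tfrac{L_{g,2}\gamma_k}{2}\|y^k-y^*(x^k)\|^2$ at each step; summing the geometric series gives the $\tfrac{L_{g,2}}{2\mu}$ coefficient. Crucially, the starting value $\|y^k-v^{0,k}-y^*(x^k)\|=\|y^k-\gamma_k\nabla_2 g(x^k,y^k)-y^*(x^k)\|$ is just one GD step on the strongly convex function $g(x^k,\cdot)$, so it is bounded by $(1-\mu\gamma_k)^{1/2}\|y^k-y^*(x^k)\|$ \emph{without} any $\kappa$ prefactor. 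Hence $(1-\mu\gamma_k)^{T+1/2}\le 1/4$ suffices, which is genuinely $T=\Theta(\kappa)$. An alternative patch within your decomposition is to use the Newton residual itself to bound $\|v^*\|\le\|y^k-y^*(x^k)\|+\tfrac{L_{g,2}}{2\mu}\|y^k-y^*(x^k)\|^2$ (from $y^k-v^*=y^*(x^k)+O(\|y^k-y^*(x^k)\|^2)$); this recovers $T=\Theta(\kappa)$ at the cost of a slightly larger constant on the quadratic term. Your $u$-bound is fine as written, since there the contraction target $(1-\gamma\mu)^{T+1}\le 1/4$ involves no $\kappa$ inflation.
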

\begin{proof}
	From the iteration of Algorithm \ref{alg:NBOGD} we know that
	\begin{align*}
	y^k - v^{t+1,k} = y^k - \left[I - \gamma_k \nabla_{22}^2 g(x^k,y^k)\right] v^{t,k} - \gamma_k \nabla_2 g(x^k,y^k).
	\end{align*}
	Combining $ \nabla_2 g(x^k,y^*(x^k))=0 $, we have
	\begin{align*}
	y^k - v^{t+1,k} - y^*(x^k)& = \left[I - \gamma_k \nabla_{22}^2 g(x^k,y^k)\right] (y^k - v^{t,k}- y^*(x^k))\\
	& + \gamma_k \big(\nabla_2 g(x^k,y^*(x^k)) - \nabla_2 g(x^k,y^k) - \nabla_{22}^2 g(x^k,y^k)(y^*(x^k) - y^k)\big).
	\end{align*}
	Since $ \nabla_{22}^2 g (x,\cdot) $ is $ L_{g,2} $-Lipschitz continuous, Lemma 1.2.4 in \cite{nesterov2018lectures} implies that
	\begin{align*}
	\|\nabla_2 g(x^k,y^*(x^k)) - \nabla_2 g(x^k,y^k) - \nabla_{22}^2 g(x^k,y^k)(y^*(x^k) - y^k)\| \leq \frac{L_{g,2}}{2} \|y^*(x^k) - y^k\|^2,
	\end{align*}
	then we can deduce that
	\begin{align}
	\|y^k - v^{t+1,k} - y^*(x^k)\| &\leq \|I - \gamma_k \nabla_{22}^2 g(x^k,y^k)\|_{\text{op}} \|y^k - v^{t,k}- y^*(x^k)\| +  \frac{L_{g,2} \gamma_k}{2} \|y^*(x^k) - y^k\|^2 \nonumber\\
	&\leq (1-\mu \gamma_k) \|y^k - v^{t,k}- y^*(x^k)\| + \frac{L_{g,2} \gamma_k}{2} \|y^*(x^k) - y^k\|^2, \label{1eq}
	\end{align}
	where the second inequality is because $ g(x,\cdot) $ is $ \mu $-strongly convex and $ \gamma_k \leq \frac{1}{L_{g,1}} $. Then we can calculate that
	\begin{align}
	\|y^{k+1} - y^*(x^k)\| & = \|y^k - v^{T,k} - y^*(x^k)\| \leq (1-\mu \gamma_k)^{T} \|y^k - v^{0,k}- y^*(x^k)\| + \frac{L_{g,2}}{2 \mu} \|y^*(x^k) - y^k\|^2, \nonumber\\
	&= (1-\mu \gamma_k)^{T} \|y^k -\gamma_k \nabla_2 g(x^k,y^k)- y^*(x^k)\| + \frac{L_{g,2}}{2 \mu} \|y^*(x^k) - y^k\|^2 \nonumber\\
	& \leq (1-\mu \gamma_k)^{T+\frac{1}{2}} \|y^k - y^*(x^k)\| + \frac{L_{g,2}}{2 \mu} \|y^*(x^k) - y^k\|^2 \nonumber\\
	& \leq \frac{1}{4} \|y^k - y^*(x^k)\| + \frac{L_{g,2}}{2 \mu} \|y^*(x^k) - y^k\|^2, \label{9eq}
	\end{align}
	where the second inequality is derived based on the same reason as in \eqref{eq:y-gd}, and the last ineauqlity holds when $ T \geq \frac{\ln 1/4}{\ln (1-\mu \gamma_k)} - \frac{1}{2}  = \Theta(\kappa)$. Finally we have
	\begin{align*}
	&\|y^{k+1}-y^*(x^{k+1})\| \leq \|y^{k+1}-y^*(x^k)\| + \|y^*(x^{k+1}) - y^*(x^k)\|\\
	\stackrel{\eqref{9eq}}{\leq} &\frac{1}{4} \|y^k - y^*(x^k)\| + \frac{L_{g,2}}{2 \mu} \|y^*(x^k) - y^k\|^2 + \|y^*(x^{k+1}) - y^*(x^k)\| \\
	\stackrel{\eqref{eq:y*-and-u*}}{\leq} & \frac{1}{4} \|y^k - y^*(x^k)\| + \frac{L_{g,2}}{2 \mu} \|y^*(x^k) - y^k\|^2 +  \frac{L_{g,1}}{\mu}\|x^{k+1} - x^k\|.
	\end{align*}
    Similarily, from the update of $ w^{t,k} $ we know that
    \begin{align}
    \|u^k - w^{t+1.k} - u^*(x^k,y^k)\|
    =&  \|\left[I - \gamma_k \nabla_{22}^2 g(x^k,y^k)\right] (u^k - w^{t,k} -u^*(x^k,y^k))  \|\nonumber\\
    {\leq} & (1-\mu \gamma_k) \| u^k - w^{t,k} - u^*(x^k,y^k)\|. \label{13eq}
    \end{align}
    This inequality implies
    \begin{align}
    \|u^{k+1} -u^*(x^k,y^k) \| & = \|u^k - w^{T.k} - u^*(x^k,y^k)\| \leq (1-\mu \gamma_k)^T \| u^k - w^{0,k} - u^*(x^k,y^k)\| \nonumber\\
    & =(1-\mu \gamma_k)^T \| u^k - \gamma_k \nabla_{22}^2 g(x^k,y^k) u^k + \gamma_k \nabla_2 f(x^k,y^k) - u^*(x^k,y^k)\| \nonumber\\
    & = (1-\mu \gamma_k)^T \| \left[I - \gamma_k \nabla_{22}^2 g(x^k,y^k)\right] (u^k -u^*(x^k,y^k))\| \nonumber\\
    & \leq (1-\mu \gamma_k)^{T+1} \|u^k -u^*(x^k)\| + (1-\mu \gamma_k)^{T+1} \|u^*(x^k,y^k) -u^*(x^k)\| \nonumber\\
    & \leq \frac{1}{4} \|u^k -u^*(x^k)\| + \frac{1}{4} \|u^*(x^k,y^k) -u^*(x^k)\|, \label{eq:same-u}
    \end{align}
    where the last inequality holds when $ T \geq \frac{\ln 1/4}{\ln (1-\mu \gamma_k)} - 1 $. Then we have
    \begin{align*}
    \|u^{k+1} -u^*(x^{k+1}) \| \leq & \|u^{k+1} -u^*(x^k,y^k) \|+ \|u^*(x^k) -u^*(x^k,y^k) \| + \|u^*(x^k) - u^*(x^{k+1})\|\\
    \leq& \frac{1}{4} \|u^k -u^*(x^k)\| + \frac{5}{4} \|u^*(x^k,y^k) -u^*(x^k)\| + \|u^*(x^{k+1}) -u^*(x^k)\|\\
    \stackrel{\eqref{eq:diff-u*},\eqref{eq:y*-and-u*}}{\leq} & \frac{1}{4} \|u^k -u^*(x^k)\| + \frac{5L_1}{4\mu} \|y^k - y^*(x^k)\| + \frac{L_u}{\mu} \|x^{k+1} - x^k\|.
    \end{align*}
Hence we complete the proof.
\end{proof}

Next, we prove that $ \|y^k - y^*(x^k)\| $ and $ \|u^k - u^*(x^k)\| $ have uniform upper bounds.

\begin{lemma}\label{lem:bound}
	Suppose Assumption \ref{asp:base} holds. Take $ T \geq \Theta(\kappa) $ and $$ \alpha_k \leq \min \Big\{ \frac{\mu^2}{8 L_{g,1} L_2 L_{g,2}},\frac{\mu^2}{20 L_{g,1}^2 L_1}, \frac{5 \mu L_1}{8 L_u L_2 L_{g,2}}, \frac{\mu}{4 L_{g,1} L_u} \Big\} = O(\kappa^{-3}),$$ 
    we have $ \|y^k - y^*(x^k)\| \leq \frac{\mu}{2 L_{g,2}} $ and $ \|u^k - u^*(x^k)\| \leq \frac{5 L_1}{2 L_{g,2}}$ for all $ 0 \leq k\leq K $, then
	\begin{align}
	\|y^{k+1}-y^*(x^{k+1})\| \leq  \frac{1}{2} \|y^k - y^*(x^k)\| + \frac{L_{g,1}}{\mu}\|x^{k+1} - x^k\|. \label{eq:y-descent-f}
	\end{align}
\end{lemma}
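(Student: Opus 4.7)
The plan is to prove the two uniform bounds by simultaneous induction on $k$, and then to derive the descent inequality \eqref{eq:y-descent-f} as a direct consequence of the $y$-bound. The key observation for the descent part is that once $\|y^k - y^*(x^k)\| \leq \frac{\mu}{2L_{g,2}}$, the quadratic term in \eqref{eq:y-descent1-f} satisfies $\frac{L_{g,2}}{2\mu}\|y^k - y^*(x^k)\|^2 \leq \frac{1}{4}\|y^k - y^*(x^k)\|$, which combines with the existing $\frac{1}{4}$-coefficient linear term to produce the $\frac{1}{2}$-coefficient in \eqref{eq:y-descent-f}.

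For the induction, the base case $k=0$ is given by the BOX 1 initialization. Assuming both bounds hold at step $k$, I would first derive a uniform upper bound on $\|d_x^k\|$. By Assumption \ref{asp:base}(d) and the inductive hypothesis, $\|u^k\| \leq \|u^*(x^k)\| + \|u^k - u^*(x^k)\| \leq r_u + \frac{5L_1}{2L_{g,2}}$, so
\begin{equation*}
\|d_x^k\| \leq \|\nabla_1 f(x^k, y^k)\| + L_{g,1}\|u^k\| \leq L_2 + \frac{5L_{g,1}L_1}{2L_{g,2}},
\end{equation*}
where $L_2 = C_{f,0} + L_{g,1} r_u$. Note that this bound does not require the (weaker) smallness of $y^k - y^*(x^k)$ and is uniform across iterations.

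Next, I plug the inductive hypotheses into \eqref{eq:y-descent1-f} to obtain $\|y^{k+1} - y^*(x^{k+1})\| \leq \frac{1}{2}\|y^k - y^*(x^k)\| + \frac{L_{g,1}}{\mu}\alpha_k \|d_x^k\|$, which is already \eqref{eq:y-descent-f}. Using $\|y^k - y^*(x^k)\| \leq \frac{\mu}{2L_{g,2}}$ and the bound on $\|d_x^k\|$, the right-hand side is at most $\frac{\mu}{4L_{g,2}} + \frac{L_{g,1}\alpha_k}{\mu}\bigl(L_2 + \frac{5L_{g,1}L_1}{2L_{g,2}}\bigr)$. The two step-size restrictions $\alpha_k \leq \frac{\mu^2}{8L_{g,1}L_2 L_{g,2}}$ and $\alpha_k \leq \frac{\mu^2}{20 L_{g,1}^2 L_1}$ each kill one of the two additive pieces inside the parentheses, yielding a total of at most $\frac{\mu}{4L_{g,2}}$, so the sum is at most $\frac{\mu}{2L_{g,2}}$. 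Similarly, substituting into \eqref{eq:u-descent-f} gives $\|u^{k+1}-u^*(x^{k+1})\| \leq \frac{5L_1}{8L_{g,2}} + \frac{5L_1}{8L_{g,2}} + \frac{L_u\alpha_k}{\mu}\bigl(L_2 + \frac{5L_{g,1}L_1}{2L_{g,2}}\bigr) = \frac{5L_1}{4L_{g,2}} + \frac{L_u\alpha_k}{\mu}\bigl(L_2 + \frac{5L_{g,1}L_1}{2L_{g,2}}\bigr)$, and the remaining two step-size conditions $\alpha_k \leq \frac{5L_1\mu}{8L_u L_2 L_{g,2}}$ and $\alpha_k \leq \frac{\mu}{4L_{g,1}L_u}$ bound the last term by $\frac{5L_1}{4L_{g,2}}$, closing the induction.

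The main obstacle is not conceptual but bookkeeping: four step-size bounds are imposed because there are two inequalities to close and each of them contains two additive terms of different $\kappa$-orders. One must verify that each pair $(L_{g,1}L_2 L_{g,2}, L_{g,1}^2 L_1)$ and $(L_u L_2 L_{g,2}, L_{g,1}L_u)$ of constants matches exactly with the corresponding bound in $\min\{\cdot\}$. A secondary point requiring care is the decoupling of the induction: while $y^{k+1}$ depends on $u^k$ (through $\|d_x^k\|$), it does not depend on $u^{k+1}$, so the simultaneous induction closes without circularity. All resulting step-size conditions are $O(\kappa^{-3})$ because $L_1, L_2 = O(\kappa)$, $L_u = O(\kappa^2)$, and $L_{g,1} = O(\kappa)$, matching the claimed $\alpha_k = \Theta(\kappa^{-3})$.
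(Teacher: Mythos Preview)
Your proposal is correct and follows essentially the same approach as the paper: bound $\|d_x^k\|$ via $\|u^k-u^*(x^k)\|$ and the constant $L_2$, substitute into the recursive inequalities \eqref{eq:y-descent1-f} and \eqref{eq:u-descent-f}, and close the simultaneous induction using exactly the four step-size restrictions; the final inequality \eqref{eq:y-descent-f} then follows immediately from the quadratic-to-linear reduction $\frac{L_{g,2}}{2\mu}\|y^k-y^*(x^k)\|^2 \le \frac{1}{4}\|y^k-y^*(x^k)\|$. The only cosmetic difference is that the paper keeps $\|u^k-u^*(x^k)\|$ symbolic in the $\|d_x^k\|$ bound while you substitute the inductive hypothesis immediately, but the arithmetic is identical.
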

\begin{proof}
	From \eqref{eq:dx} and $\|u^*(x^k)\| \leq r_u  $ we know that
	\begin{align}
	\|d_x^k\| = \|\nabla_1 f ( x^k, y^k)-\nabla_{12}^2 g (x^k, y^k) u^k\|
	\leq C_{f,0} + L_{g,1} \|u^k - u^*(x^k)\| +  L_{g,1}r_u \leq L_2 + L_{g,1} \|u^k - u^*(x^k)\|. \label{eq:dx-bound}
	\end{align}
	For $ \|x^{k+1} - x^k\|=\alpha_k\|d_x^k\| $, combine \eqref{eq:dx-bound} with \eqref{eq:y-descent1-f} and \eqref{eq:u-descent-f}, we have
	\begin{align*}
	&\|y^{k+1}-y^*(x^{k+1})\| \leq \frac{L_{g,2}}{2\mu} \|y^*(x^k) - y^k\|^2 + \frac{L_{g,1} \alpha_k}{\mu}\big(L_2 + L_{g,1} \|u^k - u^*(x^k)\|\big) + \frac{1}{4} \|y^k - y^*(x^k)\|, \\
	&\|u^{k+1}-u^*(x^{k+1})\| \leq  \frac{1}{4} \|u^k-u^*(x^k)\| + \frac{5 L_1}{4 \mu}\|y^k-y^* (x^k)\| + \frac{L_u \alpha_k}{\mu}\big(L_2 + L_{g,1} \|u^k - u^*(x^k)\|\big) .
	\end{align*}
	If $ \|y^k - y^*(x^k)\| \leq \frac{\mu}{2 L_{g,2}} $, $ \|u^k - u^*(x^k)\| \leq \frac{5 L_1}{2 L_{g,2}}$ and $ \alpha_k \leq \min\Big\{ \frac{\mu^2}{8 L_{g,1} L_2 L_{g,2}},\frac{\mu^2}{20 L_{g,1}^2 L_1}, \frac{5 \mu L_1}{8 L_u L_2 L_{g,2}}, \frac{\mu}{4 L_{g,1} L_u} \Big\} $, we can deduce that $ \|y^{k+1} - y^*(x^{k+1})\| \leq \frac{\mu}{2 L_{g,2}} $, $ \|u^{k+1} - u^*(x^{k+1})\| \leq \frac{5 L_1}{2 L_{g,2}}$. By induction, since $ \|y^0 - y^*(x^0)\| \leq \frac{\mu}{2 L_{g,2}} $, $ \|u^0 - u^*(x^0)\| \leq \frac{5 L_1}{2 L_{g,2}}$ (Lemma \ref{lem:initial-point-deter}), we have $ \|y^k - y^*(x^k)\| \leq \frac{\mu}{2 L_{g,2}} $, $ \|u^k - u^*(x^k)\| \leq \frac{5 L_1}{2 L_{g,2}}$ for all $ 0 \leq k\leq K $. Moreover, we can get \eqref{eq:y-descent-f} by taking $ \|y^k - y^*(x^k)\| \leq \frac{\mu}{2 L_{g,2}} $ in to \eqref{eq:y-descent1-f}.
\end{proof}

Then, we can obtain the results in Theorem \ref{alg:NBOGD}.
\begin{theorem}[\textbf{Restatement of Theorem \ref{thm:deter}}]
Under Assumption \ref{asp:base}, choose an initial iterate $(y^0, u^0, x^0)$  in BOX 1 that satisfies $ \|y^0 - y^*(x^0)\| \leq \min \big\{\frac{\mu}{2 L_{g,2}}, \frac{1}{2 \sqrt{L_1}}\big\} $ and $ \|u^0 - u^*(x^0)\| \leq \min \big\{\frac{5 L_1}{2 L_{g,2}}, \frac{\sqrt{L_1}}{\mu}\big\}$.
    Then, for any constant step size $\gamma_k=\gamma\leq 1/L_{g,1}$, 
    there exists a proper constant step size $$\alpha_k = \alpha \leq \min \Big\{\frac{\mu^2}{8 L_{g,1} L_2 L_{g,2}}, \frac{5 \mu L_1}{8 L_u L_2 L_{g,2}}, \frac{\mu}{4 L_{g,1} L_u}, \frac{1}{4 L_{\Phi}}, \frac{L_1}{4 L_u^2}, \frac{\mu^2}{64 L_1 L_{g,1}^2}\Big\} = \Theta (\kappa^{-3}),$$ 
    and $ T \geq \Theta(\kappa) $ such that NBO-GD, as described in Algorithm \ref{alg:NBOGD}, has the following properties: 
		\begin{enumerate}[label=(\alph*)]
		\item  
        For all integers $ K\geq 1 $, 
        $ \min_{0 \le k \le K-1} \|\nabla \Phi(x^k)\|^2 \leq \frac{2\Phi(x^0) - 2\Phi^* + 4}{\alpha K} = O(\frac{\kappa^3}{K})$. 
        That is, NBO-GD can find an $\epsilon$-optimal solution $\bar{x}$ (i.e., $\|\nabla \Phi(\bar{x})\|^2\leq \epsilon$) in $K=O(\kappa^3 \epsilon^{-1})$ steps.
		\item 
        The computational complexity of NBO-GD is: $ O(\kappa^3/\epsilon) $ gradient computations and Jacobian-vector products, and $ O(\kappa^4 /\epsilon) $ Hessian-vector products. 
	\end{enumerate} 
\end{theorem}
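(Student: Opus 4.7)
My plan is to follow the Lyapunov analysis outlined in the proof sketch, building on the per-step inequalities already proved in Lemmas \ref{lem:x-descent}, \ref{lem:y-and-u-descent}, and \ref{lem:bound}. Define
\begin{equation*}
V_k := \Phi(x^k) - \Phi^* + b_y \|y^k - y^*(x^k)\|^2 + b_u \|u^k - u^*(x^k)\|^2,
\end{equation*}
where $b_y, b_u > 0$ are $\kappa$-dependent constants to be chosen. The core task is to show $V_{k+1} - V_k \le -\tfrac{\alpha}{2}\|\nabla \Phi(x^k)\|^2 - A_1 \|x^{k+1}-x^k\|^2 - A_2 \|y^k-y^*(x^k)\|^2 - A_3 \|u^k-u^*(x^k)\|^2$ for positive constants $A_1, A_2, A_3$.

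First I would square the recursions \eqref{eq:y-descent-f} and \eqref{eq:u-descent-f}, then apply Young's inequality of the form $(a+b+c)^2 \le (1+\eta_1)(1+\eta_2) a^2 + \text{(cross terms)}$ with carefully tuned $\eta_i$, so as to produce leading coefficients strictly less than $1$ on $\|y^k-y^*(x^k)\|^2$ and $\|u^k-u^*(x^k)\|^2$. This yields contractive-style recursions of the shape
\begin{align*}
\|y^{k+1}-y^*(x^{k+1})\|^2 &\le \tfrac{1}{2}\|y^k-y^*(x^k)\|^2 + c_1 \|x^{k+1}-x^k\|^2, \\
\|u^{k+1}-u^*(x^{k+1})\|^2 &\le \tfrac{1}{2}\|u^k-u^*(x^k)\|^2 + c_2 \|y^k-y^*(x^k)\|^2 + c_3 \|x^{k+1}-x^k\|^2,
\end{align*}
with $c_1 = O(\kappa^2)$, $c_2 = O(\kappa^2)$, $c_3 = O(\kappa^2)$. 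The factor $\tfrac14$ appearing in \eqref{eq:y-descent-f} and \eqref{eq:u-descent-f} (from choosing $T \ge \Theta(\kappa)$) is what buys the strict contraction after squaring.

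Next, I would take the weighted combination with weights $b_y, b_u$ chosen so that (i) the residual $\|y^k-y^*(x^k)\|^2$ term produced by the $\Phi$-descent lemma \eqref{eq:phi-descent} ($L_1^2 \alpha$) and by the $\|u^k-u^*\|^2$ recursion ($b_u c_2$) are both absorbed by the contractive part $-\tfrac12 b_y$, and (ii) analogously for $\|u^k-u^*\|^2$. A standard triangulated choice is $b_u = \Theta(\kappa^2)$ large enough to dominate $L_{g,1}^2 \alpha$ from \eqref{eq:phi-descent}, and then $b_y = \Theta(\kappa^4)$ large enough to dominate both $L_1^2 \alpha$ and $b_u c_2$. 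With $\alpha = \Theta(\kappa^{-3})$ chosen small enough (the restriction in the theorem already provides the needed numerical margin), all three coefficients $A_1, A_2, A_3$ can be made strictly positive, and the $\tfrac{L_\Phi \alpha^2}{2}\|d_x^k\|^2$ term in \eqref{eq:phi-descent} is absorbed by $\tfrac{\alpha}{2}\|d_x^k\|^2$ when $\alpha \le 1/L_\Phi$.

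Finally I would telescope $V_{k+1} - V_k \le -\tfrac{\alpha}{2}\|\nabla \Phi(x^k)\|^2$ from $k=0$ to $K-1$, obtaining
\begin{equation*}
\min_{0\le k\le K-1} \|\nabla \Phi(x^k)\|^2 \le \frac{2 V_0}{\alpha K}.
\end{equation*}
Bounding $V_0$ requires only the initialization hypothesis in BOX 1, which forces $\|y^0-y^*(x^0)\|^2$ and $\|u^0-u^*(x^0)\|^2$ to be $O(1)$, so $V_0 \le \Phi(x^0) - \Phi^* + 2$ up to constants; with $\alpha = \Theta(\kappa^{-3})$ this yields the claimed $O(\kappa^3/K)$ rate and hence $K = O(\kappa^3/\epsilon)$ outer iterations. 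For part (b), each outer iteration of NBO-GD performs: a constant number of gradient/Jacobian-vector evaluations in computing $d_y^k$, $d_u^k$, $d_x^k$, and $T = \Theta(\kappa)$ Hessian-vector products inside the GD subroutine. Multiplying by $K$ and adding the one-time initialization cost from Remark \ref{rmk:deter} gives $O(\kappa^3/\epsilon)$ gradient/Jacobian-vector and $O(\kappa^4/\epsilon)$ Hessian-vector complexities. The NBO-CG variant of Remark \ref{rmk:CG} replaces the subroutine with conjugate gradient, which converges in $O(\sqrt{\kappa}\log\kappa)$ steps to the required accuracy, trading the $\Theta(\kappa)$ factor for $\Theta(\sqrt{\kappa}\log\kappa)$ in the Hessian-vector count.

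The main obstacle I anticipate is tuning $b_y$ and $b_u$ simultaneously: the $\|u-u^*\|^2$ recursion injects a $c_2\|y-y^*\|^2$ cross term whose weight grows with $\kappa$, while the $\Phi$ descent injects $\|u-u^*\|^2$ with weight $L_{g,1}^2\alpha$. Closing this cycle requires the Lyapunov weights to be set in the correct order ($b_u$ first, then $b_y$) and requires $\alpha$ to be taken as a sufficiently small constant multiple of $\kappa^{-3}$ to absorb all cross terms; the explicit step-size bound stated in the theorem is precisely what the bookkeeping produces.
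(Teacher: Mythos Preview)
Your overall strategy is correct and matches the paper's proof closely: the same Lyapunov function, the same squaring of the linear recursions from Lemmas~\ref{lem:x-descent}--\ref{lem:bound}, the same telescoping argument, and the same complexity count.

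There is, however, one point where your tentative tuning would derail the rate. You propose choosing $b_u = \Theta(\kappa^2)$ ``large enough to dominate $L_{g,1}^2\alpha$'' and then $b_y = \Theta(\kappa^4)$. But with $\alpha = \Theta(\kappa^{-3})$ the term $L_{g,1}^2\alpha$ is already $\Theta(\kappa^{-1})$, so a large $b_u$ is unnecessary; worse, taking $b_y = \Theta(\kappa^4)$ forces the coefficient on $\|x^{k+1}-x^k\|^2$ (equivalently $\|d_x^k\|^2$) to be negative unless $\alpha = O(\kappa^{-6})$, which would give an $O(\kappa^6/K)$ rate instead of $O(\kappa^3/K)$. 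The paper resolves this by going the other direction: it sets $b_u = \mu^2/(8L_1)$ \emph{small} so that the cross term $8 b_u L_1^2/\mu^2 = L_1$ appearing in the $y$-coefficient stays at the same scale as $b_y/2$, and then $b_y = 4L_1$ suffices. With these weights the three sign conditions reduce exactly to the step-size bound in the theorem statement, and $\alpha = \Theta(\kappa^{-3})$ survives. So the order in which you set the weights is right, but the direction (small $b_u$, moderate $b_y$) is the opposite of what you sketched; once you run the bookkeeping you would discover this, but it is worth being aware of up front.
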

\begin{proof}
	Define a Lyapunov function 
	\begin{align*}
	V_k=f (x^k, y^*(x^k))-\Phi^*+b_y\|y^k-y^* (x^k)\|^2 + b_u\|u^k-u^* (x^k)\|^2,
	\end{align*}
	where $ b_y = 4 L_1, b_u = \frac{\mu^2}{8 L_1} $. When $ \alpha_k $ satisfies the bound in Lemma \ref{lem:bound}, because of Cauchy-Schwarz inequality and $ x^{k+1} - x^k = \alpha_k d_x^k $, inequalities \eqref{eq:y-descent-f} and \eqref{eq:u-descent-f} become
	\begin{align*}
	\|y^{k+1}-y^*(x^{k+1})\|^2 &\leq \frac{1}{2} \|y^k - y^*(x^k)\|^2 + \frac{2L_{g,1}^2 \alpha_k^2}{\mu^2}\|d_x^k\|^2, \\
	\|u^{k+1}-u^*(x^{k+1})\|^2 & \leq \frac{1}{8} \|u^k-u^*(x^k)\|^2 + \frac{8 L_1^2}{\mu^2} \|y^k-y^* (x^k)\|^2 + \frac{4L_u^2\alpha_k^2}{\mu^2}\|d_x^k\|^2.
	\end{align*}
	Combining \eqref{eq:phi-descent}, we obtain
	\begin{align*}
	&V_{k+1}-V_k \leq  -\frac{\alpha_k}{2}\|\nabla \Phi(x^k)\|^2-\left(\frac{\alpha_k}{2 }-\frac{L_\Phi \alpha_k^2}{2} -\frac{4b_u L_u^2 \alpha_k^2}{\mu^2}-\frac{2b_y L_{g , 1}^2 \alpha_k^2}{ \mu^2 }\right)\|d_x^k\|^2 \\
	& \quad \quad -\left(\frac{b_y}{2}-L_1^2 \alpha_k-\frac{8b_u L_1^2 }{\mu^2}\right)\|y^k-y^*(x^k)\|^2 -\left(\frac{7b_u}{8}-L_{g,1}^2 \alpha_k  \right)\|u^k-u^*(x^k)\|^2.
	\end{align*}
	Further controlling $ \alpha_k \leq  \min \big\{\frac{1}{4 L_{\Phi}}, \frac{L_1}{4 L_u^2}, \frac{\mu^2}{64 L_1 L_{g,1}^2}\big\} $, we have 
	\begin{align*}
	\frac{\alpha_k}{2 }-\frac{L_{\Phi} \alpha_k^2}{2} -\frac{4b_u L_u^2 \alpha_k^2}{\mu^2}-\frac{2b_y L_{g , 1}^2 \alpha_k^2}{ \mu^2 } \geq \frac{\alpha_k}{8},\quad \frac{b_y}{2}-L_1^2 \alpha_k-\frac{8b_u L_1^2 }{\mu^2} \geq \frac{b_y}{8},\quad \frac{7b_u}{8}-L_{g,1}^2 \alpha_k \geq \frac{7b_u}{16}.
	\end{align*}
	Then, by telescoping sum, we have $\min_{0 \le k \le K-1} \|\nabla \Phi(x^k)\|^2 \leq \frac{1}{K}\sum_{k=0}^{K-1} \|\nabla \Phi(x^k)\|^2 \leq \frac{2V_0}{\alpha K} = O \big(\frac{\kappa^3}{ K}\big),$
	where $ V_0 = \Phi(x^0)-\Phi^*+4L_1\|y^0-y^* (x^0)\|^2 + \frac{\mu^2}{8 L_1}\|u^0-u^* (x^0)\|^2  \leq \Phi(x^0)-\Phi^* + 2$ because from Lemma \ref{lem:initial-point-deter} we know that $ \|y^0-y^* (x^0)\|^2 \leq \frac{1}{4L_1} $ and $ \|u^0-u^* (x^0k)\|^2 \leq \frac{L_1}{\mu^2} $.
	
	Additionally, we calculate the complexities of Algorithm \ref{alg:NBOGD} to achieve a stationary point. To achieve $ \min_{0 \le k \le K-1} \|\nabla \Phi(x^k)\|^2 \leq \epsilon $, we need gradient computations: $ \text{Gc}(\epsilon) = O(K+N_0) = O(\kappa^3 \epsilon^{-1})$; Matrix-vector products:  $ \text{MV}(\epsilon) = O(KT+Q_0)= O(\kappa^{4}  \epsilon^{-1})$.
\end{proof}

Moreover, we give a brief proof of Remark \ref{rmk:CG}. 
\begin{proof}\textbf{(Proof of Remark \ref{rmk:CG})}
For $ y^k $ and $ v^k $, if we use CG to solve \eqref{eq:subproblem-v} with initial point $v^{-1,k}=0$, we have
\begin{align*}
&\|y^{k+1} - y^*(x^{k+1})\| \leq \|y^{k+1} - y^*(x^k)\| + \|y^*(x^{k+1}) - y^*(x^k)\|\\
\stackrel{\eqref{eq:y*-and-u*}}{\leq} &\|y^{k} - v^*(x^k,y^k) - y^*(x^k)\| + \|v^{T,k} - v^*(x^k,y^k)\| + \frac{L_{g,1}}{\mu}\|x^{k+1} - x^k\|\\
\leq& \frac{L_{g,2}}{2 \mu} \|y^k - y^*(x^k)\|^2 + 2 \sqrt{\kappa} \big(\frac{\sqrt{\kappa}-1}{\sqrt{\kappa}+1}\big)^{T+1} \|v^{-1,k} - v^*(x^k,y^k)\| + \frac{L_{g,1}}{\mu}\|x^{k+1} - x^k\| \\
\leq &\frac{L_{g,2}}{2 \mu} \|y^k - y^*(x^k)\|^2 + 2 \frac{\sqrt{\kappa} L_{g,1}}{\mu} \big(\frac{\sqrt{\kappa}-1}{\sqrt{\kappa}+1}\big)^{T+1} \|y^k - y^*(x^k)\| + \frac{L_{g,1}}{\mu}\|x^{k+1} - x^k\|
\end{align*}
where the third inequality is due to \eqref{eq:quadratic} and the linear rate of CG (refer to (17) in \cite{grazzi2020iteration}), and the last inequality folloes from $ v^{-1,k} = 0 $ and $ \|v^*(x^k,y^k)\|  = \|[\nabla_{22}^2 g(x^k,y^k)]^{-1}(\nabla_2 g(x^k,y^k) - \nabla_2 g(x^k,y^*(x^k))\|\leq \frac{L_{g,1}}{\mu} \|y^k - y^*(x^k)\| $.
When we take $ T =  \Theta(\sqrt{\kappa} \log (\kappa)) $, the above inequality becomes the same as \eqref{eq:y-descent1-f}.
For $ u^k $ and $ w^k $, if we use CG to solve \eqref{eq:subprolem-w} with initial point $w^{-1,k}=0$, similarily, we obain
\begin{align*}
&\|u^{k+1} - u^*(x^k,y^k)\| = \|u^{k} - w^{T,k} - u^*(x^k,y^k)\| \\
\leq & \|w^{T,k} - w^*(x^k,y^k,u^k)\| \leq 2 \sqrt{\kappa} \big(\frac{\sqrt{\kappa}-1}{\sqrt{\kappa}+1}\big)^{T+1} \|w^{-1,k} - w^*(x^k,y^k,u^k)\|\\
= & 2\sqrt{\kappa} \big(\frac{\sqrt{\kappa}-1}{\sqrt{\kappa}+1}\big)^{T+1} \|u^{k} - u^*(x^k,y^k)\|\\
\leq &2\sqrt{\kappa} \big(\frac{\sqrt{\kappa}-1}{\sqrt{\kappa}+1}\big)^{T+1} \|u^{k} - u^*(x^k)\| + 2\sqrt{\kappa} \big(\frac{\sqrt{\kappa}-1}{\sqrt{\kappa}+1}\big)^{T+1} \|u^*(x^k,y^k) - u^*(x^k)\|.
\end{align*}
When we take $ T =  \Theta(\sqrt{\kappa} \log (\kappa)) $, we can get \eqref{eq:same-u}.

The proof process following \eqref{eq:y-descent1-f} and \eqref{eq:same-u} is the same as that of NBO-GD. Therefore, if we replace GD with CG in Algorithm \ref{alg:NBOGD}, the convergence rate is $\min_{0 \le k \le K-1} \|\nabla \Phi(x^k)\|^2 \leq \frac{1}{K}\sum_{k=0}^{K-1} \|\nabla \Phi(x^k)\|^2 \leq \frac{2V_0}{\alpha K}$. Additionally, in order to achieve $ \min_{0 \le k \le K-1} \|\nabla \Phi(x^k)\|^2 \leq \epsilon $, we need gradient computations: $ \text{Gc}(\epsilon) = O(K+N_0) = O(\kappa^3 \epsilon^{-1})$; Matrix-vector products:  $ \text{MV}(\epsilon) = O(KT+Q_0)= O(\kappa^{3.5} \log \kappa \epsilon^{-1})$.
\end{proof}

\subsection{Detailed Proof of Theorem \ref{thm:sto}}
\label{app:thmsto}

Here, we provide a detailed proof of Theorem \ref{thm:sto} for the stochastic setting. Firstly, similar to the deterministic setting, we establish the descent of $ \E[\Phi (x^k)] $, $ \E \left[\|y^k-y^* (x^k)\|^2\right] $ and $ \E\left[\|u^0-u^*(x^k)\|^2\right] $.
\begin{lemma}\label{lem:x-descent-sto}
	Suppose Assumption \ref{asp:base} and \ref{asp:sto} hold, then the sequences generated by Algorithm \ref{alg:NSBOSGD} satisfy
	\begin{align}
	\E[\Phi (x^{k+1})] &\leq \E[\Phi (x^k)] - \frac{\alpha_k}{2}\E\left[\|\nabla \Phi(x^k)\|^2\right]  - \big(\frac{\alpha_k}{2} - \frac{L_{\Phi} \alpha_k^2}{2}\big)\E\left[\|D_x^k\|^2\right] + 2 L_1^2 \alpha_k \E \left[\|y^k-y^* (x^k)\|^2\right]\nonumber\\
	& \quad + \big(2L_{g,1}^2  + 4 S^{g,2}_3 \big)\alpha_k\E\left[\|u^k-u^*(x^k)\|^2\right] + \big(2S_{f,1} + 4 S^{g,2}_3 r_u^2 \big) \alpha_k. \label{eq:phi-descent-sto}
	\end{align}
\end{lemma}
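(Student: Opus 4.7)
\textbf{Proof proposal for Lemma \ref{lem:x-descent-sto}.} The plan is to mirror the deterministic argument in Lemma \ref{lem:x-descent} but carefully split the hypergradient approximation error into a \emph{bias} component (the same one that appears in \eqref{eq:diff-gradient-x}) and a \emph{variance} component (controlled by Assumption~\ref{asp:sto}). I start from $L_\Phi$-smoothness of $\Phi$ applied to the stochastic iterate $x^{k+1}=x^k-\alpha_k D_x^k$, which gives
\begin{equation*}
\Phi(x^{k+1}) \le \Phi(x^k) -\alpha_k\langle \nabla\Phi(x^k), D_x^k\rangle + \tfrac{L_\Phi\alpha_k^2}{2}\|D_x^k\|^2.
\end{equation*}
Applying the polarization identity $-\alpha_k\langle a,b\rangle=\tfrac{\alpha_k}{2}\|a-b\|^2-\tfrac{\alpha_k}{2}\|a\|^2-\tfrac{\alpha_k}{2}\|b\|^2$ with $a=\nabla\Phi(x^k)$ and $b=D_x^k$, then taking $\E^k[\cdot]$ and using that $\E^k[D_x^k]=d_x^k$ leaves $\|\nabla\Phi(x^k)\|^2$ deterministic, combines the $-\tfrac{\alpha_k}{2}\|D_x^k\|^2$ with the smoothness term into $-(\tfrac{\alpha_k}{2}-\tfrac{L_\Phi\alpha_k^2}{2})\E^k[\|D_x^k\|^2]$, and isolates the single remaining error term $\tfrac{\alpha_k}{2}\E^k[\|\nabla\Phi(x^k)-D_x^k\|^2]$.

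Next I split the error via the inequality $\|\nabla\Phi(x^k)-D_x^k\|^2\le 2\|\nabla\Phi(x^k)-d_x^k\|^2+2\|D_x^k-d_x^k\|^2$. The first piece is pure bias: bound \eqref{eq:diff-gradient-x} from Lemma \ref{lem:x-descent}, which together with $(a+b)^2\le 2a^2+2b^2$ yields
\begin{equation*}
\|\nabla\Phi(x^k)-d_x^k\|^2 \le 2L_1^2\|y^k-y^*(x^k)\|^2+2L_{g,1}^2\|u^k-u^*(x^k)\|^2.
\end{equation*}
For the variance piece, expand $D_x^k-d_x^k$ using \eqref{eq:dx}, apply $(a+b)^2\le 2a^2+2b^2$, and take $\E^k[\cdot]$. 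Assumption~\ref{asp:sto} together with the definitions in \eqref{eq:def-S} gives $\E^k[\|D_x^k-d_x^k\|^2]\le 2S_{f,1}+2S_3^{g,2}\|u^k\|^2$. Splitting $\|u^k\|^2\le 2\|u^k-u^*(x^k)\|^2+2\|u^*(x^k)\|^2\le 2\|u^k-u^*(x^k)\|^2+2r_u^2$ (using $\|u^*(x^k)\|\le r_u=L_{f,0}/\mu$) converts this into the form that matches the $(2L_{g,1}^2+4S_3^{g,2})$ and $(2S_{f,1}+4S_3^{g,2}r_u^2)$ coefficients in the lemma.

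Finally I substitute both bounds into the $\tfrac{\alpha_k}{2}\E^k[\|\nabla\Phi(x^k)-D_x^k\|^2]$ term, collect the coefficients, and take the outer expectation over all randomness through iteration $k-1$ to promote the conditional bound to an unconditional one, yielding exactly \eqref{eq:phi-descent-sto}.

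The conceptual content here is essentially the deterministic Lemma \ref{lem:x-descent} plus a single variance bound, so no new analytic difficulty arises; the only obstacle is bookkeeping: one must be careful to apply $(a+b)^2\le 2a^2+2b^2$ on the full error $\|\nabla\Phi(x^k)-D_x^k\|^2$ (rather than using the tighter bias–variance equality $\E^k[\|\nabla\Phi(x^k)-D_x^k\|^2]=\|\nabla\Phi(x^k)-d_x^k\|^2+\E^k[\|D_x^k-d_x^k\|^2]$), so that the constants end up doubled in exactly the way the statement records. The assumption that $B_3^k$ and $B_4^k$ are used independently in \eqref{eq:direction-x} is not needed, since the $2(a+b)^2$ split already bypasses any cross term.
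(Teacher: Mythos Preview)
Your proposal is correct and follows essentially the same approach as the paper: the paper also replaces $d_x^k$ by $D_x^k$ in the polarization-identity descent inequality \eqref{eq:phi-descent0}, then bounds $\E[\|\nabla\Phi(x^k)-D_x^k\|^2]\le 2\E[\|\nabla\Phi(x^k)-d_x^k\|^2]+2\E[\|D_x^k-d_x^k\|^2]$ and handles the two pieces exactly as you describe via \eqref{eq:diff-gradient-x} and the variance estimate \eqref{eq:direction-x-var}. Your closing remark that the crude $(a+b)^2\le 2a^2+2b^2$ split (rather than the exact bias--variance identity) is what produces the stated constants is precisely the point; note also that the unbiasedness $\E^k[D_x^k]=d_x^k$ you mention is not actually used anywhere in the argument.
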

\begin{proof} 
	Recall that $ \E^k = \E [\cdot | x^k,y^k,u^k] $, from Assumption \ref{asp:sto} we can deduce that
	\begin{align}
	& \E^k\left[\|D_x^k-d_x^k\|^2\right] \nonumber\\
	\leq &2 \E^k\left[\|\nabla_1 F(x^k, y^k ; B_3^k)-\nabla_1 f(x^k, y^k)\|^2\right]+2 \E^k\left[ \|\nabla_{12}^2 G(x^k, y^k, B_4^k )-\nabla_{12}^2 g(x^k, y^k)\|^2\right] \|u^k\|^2 \nonumber\\
	\leq& 2 S_{f,1} +2 S^{g,2}_3 \|u^k\|^2 \leq 2 S_{f,1}+2 S^{g,2}_3\left(2 \|u^k-u^*(x^k)\|^2 +2 \|u^*(x^k)\|^2\right) \nonumber\\
	\leq & 2 S_{f,1}+4 S^{g,2}_3 r_u^2+4 S^{g,2}_3 \|u^k-u^*(x^k)\|^2, \label{eq:direction-x-var}
	\end{align}
	where the last inequality follows from $ \|u^*(x^k)\| \leq r_u$. 
	For $ x^{k+1} = x^k -\alpha_k D_x^k $, replace $ d_x^k $ with $ D_x^k $ in \eqref{eq:phi-descent0} and take expectation, we have 
	\begin{align}
	\E\left[\Phi (x^{k+1})\right] = \E\left[\Phi (x^k)\right] + \frac{\alpha_k}{2}\E\left[\|\nabla \Phi(x^k)- D_x^k\|^2\right] - \frac{\alpha_k}{2}\E\left[\|\nabla \Phi(x^k)\|^2\right] - \big(\frac{\alpha_k}{2} - \frac{L_{\Phi} \alpha_k^2}{2}\big)\E\left[\|D_x^k\|^2\right], \label{2eq}
	\end{align}
	where 
	\begin{align}
	&\E \left[\|\nabla \Phi(x^k)- D_x^k\|^2\right] \leq 2\E\left[\|\nabla \Phi(x^k)- d_x^k\|^2\right] + 2 \E\left[\|D_x^k - d_x^k\|^2\right] \nonumber\\
	\overset{\eqref{eq:diff-gradient-x},\eqref{eq:direction-x-var}}{\leq} & 4 L_1^2 \E\left[\|y^k-y^* (x^k)\|^2\right] + (4 L_{g,1}^2 + 8 S^{g,2}_3) \E\left[\|u^k-u^*(x^k)\|^2\right] + 4 S_{f,1} + 8 S^{g,2}_3 r_u^2. \label{3eq}
	\end{align}
	Then we can get the desired inequality by putting \eqref{3eq} into \eqref{2eq}.
\end{proof}

\begin{lemma}\label{lem:v-descent-sto}
	Suppose Assumption \ref{asp:base},\ref{asp:sto} and \ref{asp:moment-bound} hold, if $S^{g,2}_1 \leq \frac{\mu^2}{4} $, then the sequences generated by Algorithm \ref{alg:NSBOSGD} satisfy	
	\begin{align}
	\E\left[\|y^{k+1} - y^*(x^{k+1})\|^2\right] 
	\leq &   \frac{5rL_{g,2}^2}{\mu^2}\big(\E\left[\|y^*(x^k) - y^k\|^2\right]\big)^2 + r\big[5(1-\frac{\mu \gamma_k}{2})^{2T+1} + \frac{20S^{g,2}_1}{\mu^2}\big] \E \left[\|y^k - y^*(x^k)\|^2\right] \nonumber\\
	&+ \frac{5rL_{g,1}^2\alpha_k^2}{\mu^2}\E\left[\|D_x^k\|^2\right] + 5r(1-\frac{\mu \gamma_k}{2})^{2T} \gamma_k^2 S_{g,1} + \frac{20r S_{g,1}}{\mu^2}.\label{eq:y-descent}
	\end{align}
\end{lemma}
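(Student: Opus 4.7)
The plan is to mirror the deterministic argument in Lemma~\ref{lem:y-and-u-descent} while tracking the extra stochastic bias introduced by the fact that $D_y^k$ is drawn once per outer step whereas $H^{t,k}$ is refreshed every inner step. I introduce the perturbed Newton target
\[
\tilde v^k := [\nabla_{22}^2 g(x^k,y^k)]^{-1} D_y^k,
\]
which is the unique fixed point that the inner recursion $v^{t+1,k}=(I-\gamma_k H^{t,k})v^{t,k}+\gamma_k D_y^k$ aims at: conditional on $D_y^k$, this is an unbiased SGD scheme for solving the linear system $\nabla_{22}^2 g(x^k,y^k)\,v=D_y^k$. Setting $y^+ := y^k-[\nabla_{22}^2 g(x^k,y^k)]^{-1}\nabla_2 g(x^k,y^k)$ to be the exact Newton step, I decompose
\begin{align*}
y^{k+1}-y^*(x^{k+1}) &= (y^+-y^*(x^k)) + [\nabla_{22}^2 g(x^k,y^k)]^{-1}(\nabla_2 g(x^k,y^k)-D_y^k) \\
&\quad -(v^{T,k}-\tilde v^k)-(y^*(x^{k+1})-y^*(x^k)),
\end{align*}
which isolates a deterministic Newton-quadratic error, the bias from sampling $D_y^k$, the inner-loop SGD error, and the displacement of $y^*$ driven by the upper-level step.

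For the inner loop I write $H^{t,k}=\nabla_{22}^2 g(x^k,y^k)+E^{t,k}$ with a zero-mean residual of variance bounded by $S^{g,2}_1$, expand $\|v^{t+1,k}-\tilde v^k\|^2$, and take conditional expectation over $B_1^{t,k}$. The cross term vanishes and yields
\[
\E[\|v^{t+1,k}-\tilde v^k\|^2\mid\cdot]\leq (1-\mu\gamma_k)^2\|v^{t,k}-\tilde v^k\|^2+\gamma_k^2 S^{g,2}_1\|v^{t,k}\|^2.
\]
Splitting $\|v^{t,k}\|^2\leq 2\|v^{t,k}-\tilde v^k\|^2+2\|\tilde v^k\|^2$ and invoking the hypotheses $S^{g,2}_1\leq\mu^2/4$ and $\gamma_k\leq 1/L_{g,1}$, I absorb the stochastic term into the contraction factor and obtain a per-step rate of $(1-\mu\gamma_k/2)^2$. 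Iterating from $v^{-1,k}=0$ for $T+1$ steps and summing the geometric tail gives
\[
\E[\|v^{T,k}-\tilde v^k\|^2\mid y^k,D_y^k]\leq \bigl[(1-\mu\gamma_k/2)^{2T+1}+\tfrac{4\gamma_k S^{g,2}_1}{\mu}\bigr]\|\tilde v^k\|^2,
\]
and I further bound $\|\tilde v^k\|^2\leq \tfrac{2L_{g,1}^2}{\mu^2}\|y^k-y^*(x^k)\|^2+\tfrac{2}{\mu^2}\|D_y^k-\nabla_2 g(x^k,y^k)\|^2$ using $\nabla_2 g(x^k,y^*(x^k))=0$ and Lipschitzness of $\nabla_2 g$.

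To finish, I square the four-term decomposition by Cauchy--Schwarz (with an overall factor of $5$ arising from the two-part bound on the inner-loop error), control the Newton-quadratic term by $\tfrac{L_{g,2}^2}{4\mu^2}\|y^k-y^*(x^k)\|^4$ via \eqref{eq:quadratic}, the $D_y^k$-bias term by $\tfrac{1}{\mu^2}\|\nabla_2 g(x^k,y^k)-D_y^k\|^2$, and the displacement term by $\tfrac{L_{g,1}^2\alpha_k^2}{\mu^2}\|D_x^k\|^2$ from \eqref{eq:y*-and-u*}, and then take full expectations while using $\E[\|D_y^k-\nabla_2 g(x^k,y^k)\|^2]\leq S_{g,1}$. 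The quartic term $\E[\|y^k-y^*(x^k)\|^4]$ is then bounded by $r(\E[\|y^k-y^*(x^k)\|^2])^2$, the second-moment form of Assumption~\ref{asp:moment-bound}. The main obstacle is twofold: verifying that holding $D_y^k$ fixed across inner steps does not spoil geometric contraction (which is exactly what forces the condition $S^{g,2}_1\leq\mu^2/4$), and closing a recursion on a \emph{squared} quantity even though the Newton acceleration naturally produces a fourth-moment contribution --- which is precisely where the bounded-moment hypothesis must be invoked.
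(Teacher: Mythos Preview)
Your decomposition and inner-loop contraction are sound, but the proposal has a genuine gap at the closing step. You claim that the quartic contribution $\E[\|y^k-y^*(x^k)\|^4]$ is bounded by $r(\E[\|y^k-y^*(x^k)\|^2])^2$ via ``the second-moment form of Assumption~\ref{asp:moment-bound}.'' No such form is assumed: Assumption~\ref{asp:moment-bound} relates the second moment to the square of the \emph{first} moment, $\E[\|y^k-y^*(x^k)\|^2]\le r(\E[\|y^k-y^*(x^k)\|])^2$, and a fourth-moment bound does not follow from that. Since the Newton-quadratic term inevitably produces a fourth power once you work in squared norms, this step cannot be justified under the stated hypotheses.

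The paper sidesteps this by running the whole inner-loop recursion at the level of \emph{first} moments. It tracks $\E[\|y^k-v^{t,k}-y^*(x^k)\|]$ (rather than your $\E[\|v^{t,k}-\tilde v^k\|^2]$), uses the split $\|v^{t,k}\|\le\|y^k-v^{t,k}-y^*(x^k)\|+\|y^k-y^*(x^k)\|$ to absorb the Hessian-noise term with the same $S_1^{g,2}\le\mu^2/4$ hypothesis, and arrives at a bound on $\E[\|y^{k+1}-y^*(x^{k+1})\|]$. Only \emph{after} the recursion is closed does it square (via $(\sum_{i=1}^5 a_i)^2\le 5\sum a_i^2$ together with $(\E X)^2\le\E[X^2]$ on the right-hand side) and then invoke Assumption~\ref{asp:moment-bound} at step $k{+}1$ to pass from $(\E[\|y^{k+1}-y^*(x^{k+1})\|])^2$ to $\E[\|y^{k+1}-y^*(x^{k+1})\|^2]$, which multiplies every term by $r$. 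In that route the Newton-quadratic contribution already sits as $\tfrac{L_{g,2}}{\mu}\E[\|y^k-y^*(x^k)\|^2]$ \emph{before} squaring, so one obtains $(\E[\|y^k-y^*(x^k)\|^2])^2$ directly and no fourth moment ever appears.

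A secondary mismatch: by centering the inner recursion at $\tilde v^k$ and then bounding $\|\tilde v^k\|^2\le\tfrac{2L_{g,1}^2}{\mu^2}\|y^k-y^*(x^k)\|^2+\cdots$, your coefficient in front of $\E[\|y^k-y^*(x^k)\|^2]$ picks up an extra factor $L_{g,1}^2/\mu^2=\kappa^2$ that is absent from the lemma as stated (and would force $T=\Theta(\kappa\log\kappa)$ downstream instead of $\Theta(\kappa)$). The paper's choice of reference point $y^*(x^k)$, so that the recursion starts from $\|y^k-v^{-1,k}-y^*(x^k)\|=\|y^k-y^*(x^k)\|$, avoids this.
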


\begin{proof}
	From the iteration of $ v^{t,k} $ we know that
	\begin{align}
    & \|y^k - v^{t+1,k} - y^*(x^k)\| \leq  \|y^k - v^{t,k} + \gamma_k d_v^k - y^*(x^k)\| + \gamma_k \|D_v^{t,k} - d_v^{t,k}\| \nonumber\\ 
    \stackrel{\eqref{1eq}}{\leq} & (1-\mu \gamma_k)\|y^k - v^{t,k}- y^*(x^k)\| + \frac{L_{g,2}\gamma_k}{2}\|y^*(x^k) - y^k\|^2 + \gamma_k \|D_v^{t,k} - d_v^{t,k}\|. \label{10eq}
	\end{align}
	For $ \|D_v^{t,k} - d_v^{t,k}\| $, we have
	\begin{align*}
	&\big(\E^{t,k} \left[ \|D_v^{t,k} - d_v^{t,k}\|\right]\big)^2 \leq \E^{t,k} \left[ \|D_v^{t,k} - d_v^{t,k}\|^2\right]\\
	= & \E^{t,k} \left[ \|(\nabla_{22}^2 G (x^k, y^k; B_1^{t,k} ) - \nabla_{22}^2 g (x^k, y^k)) v^{t,k} - \nabla_2 G( x^k, y^k; B_2^k ) + \nabla_2 g(x^k,y^k)\|^2\right]\\
    \leq &  \E^{t,k}\left[ \|\nabla_{22}^2 G (x^k, y^k; B_1^{t,k} ) - \nabla_{22}^2 g (x^k, y^k)\|^2\right] \|v^{t,k}\|^2 + \|\nabla_2 G( x^k, y^k; B_2^k ) - \nabla_2 g(x^k,y^k)\|^2\\
    \leq & S^{g,2}_1 \|v^{t,k}\|^2 + \|\nabla_2 G( x^k, y^k; B_2^k ) - \nabla_2 g(x^k,y^k)\|^2.
	\end{align*}
	Then by trangle inequality, we obtain
    \begin{align}
    &\E^{t,k} \left[ \|D_v^{t,k} - d_v^{t,k}\|\right] \leq \sqrt{S^{g,2}_1} \|v^{t,k}\| + \|\nabla_2 G( x^k, y^k; B_2^k ) - \nabla_2 g(x^k,y^k)\| \nonumber\\
    \leq & \sqrt{S^{g,2}_1} \|y^k - v^{t,k}- y^*(x^k)\| + \sqrt{S^{g,2}_1} \|y^k - y^*(x^k)\| + \|\nabla_2 G( x^k, y^k; B_2^k ) - \nabla_2 g(x^k,y^k)\|. \label{11eq}
    \end{align}
	Take expectation on both side of the above inequality and combine \eqref{10eq}, we have
	\begin{align*}
	& \E\left[\|y^k - v^{t+1,k} - y^*(x^k)\|\right] \\
	\stackrel{\eqref{10eq}}{\leq} & (1-\mu \gamma_k)\E\left[\|y^k - v^{t,k}- y^*(x^k)\|\right] + \frac{L_{g,2}\gamma_k}{2}\E\left[\|y^*(x^k) - y^k\|^2\right] + \gamma_k \E\left[\|D_v^{t,k} - d_v^{t,k}\|\right]\\
	\stackrel{\eqref{11eq}}{\leq} & (1-\mu \gamma_k+ \gamma_k \sqrt{S^{g,2}_1})\E\left[\|y^k - v^{t,k}- y^*(x^k)\|\right] + \frac{L_{g,2}\gamma_k}{2}\E\left[\|y^*(x^k) - y^k\|^2\right]\\
	& + \gamma_k\sqrt{S^{g,2}_1} \E \left[\|y^k - y^*(x^k)\|\right] + \gamma_k\E \left[\|\nabla_2 G( x^k, y^k; B_2^k ) - \nabla_2 g(x^k,y^k)\|\right].
	\end{align*}
	For $ S_{g,1}\leq \frac{\mu^2}{4} $, we have $ 1-\mu \gamma_k+ \gamma_k \sqrt{S^{g,2}_1} \leq 1-\frac{\mu \gamma_k}{2} $, this implies
	\begin{align*}
	& \E\left[\|y^{k+1} - y^*(x^{k+1})\|\right] = \E\left[\|y^k - v^{T,k} - y^*(x^k)\|\right] + \E\left[\|y^*(x^{k+1}) - y^*(x^k)\|\right]\\
	\leq& (1-\frac{\mu \gamma_k}{2})^T\E\left[\|y^k - v^{0,k}- y^*(x^k)\|\right] + \frac{L_{g,2}}{\mu}\E\left[\|y^*(x^k) - y^k\|^2\right] + \frac{2\sqrt{S^{g,2}_1}}{\mu} \E \left[\|y^k - y^*(x^k)\|\right]\\
	& + \E\left[\|y^*(x^{k+1}) - y^*(x^k)\|\right] + \frac{2}{\mu}\E \left[\|\nabla_2 G( x^k, y^k; B_2^k ) - \nabla_2 g(x^k,y^k)\|\right],
	\end{align*}
	and from \eqref{eq:y*-and-u*} we know that $ \E\left[\|y^*(x^{k+1}) - y^*(x^k)\|\right] \leq \frac{L_{g,1}}{\mu}\E\left[\|x^{k+1} - x^k\|\right] $.
	Following $(\sum_{i=1}^{n} a_i)^2 \leq n \sum_{i=1}^{n} a_i^2$ and $ (\E [X])^2 \leq \E [X^2] $, we can deduce that
	\begin{align*}
	& \big(\E\left[\|y^{k+1} - y^*(x^{k+1})\|\right]\big)^2  
	\leq 5(1-\frac{\mu \gamma_k}{2})^{2T}\E\left[\|y^k - v^{0,k}- y^*(x^k)\|^2\right] + \frac{5L_{g,2}^2}{\mu^2}\big(\E\left[\|y^*(x^k) - y^k\|^2\right]\big)^2\\
	&+ \frac{5L_{g,1}^2}{\mu^2}\E\left[\|x^{k+1} - x^k\|^2\right] + \frac{20S^{g,2}_1}{\mu^2} \E \left[\|y^k - y^*(x^k)\|^2\right] + \frac{20}{\mu^2}\E \left[\|\nabla_2 G( x^k, y^k; B_2^k ) - \nabla_2 g(x^k,y^k)\|^2\right]\\
	\leq & 5(1-\frac{\mu \gamma_k}{2})^{2T+1} \E\left[\|y^{k} - y^*(x^k)\|^2\right] + \frac{5L_{g,2}^2}{\mu^2}\big(\E\left[\|y^*(x^k) - y^k\|^2\right]\big)^2 + \frac{20S^{g,2}_1}{\mu^2} \E \left[\|y^k - y^*(x^k)\|^2\right]\\
	&+ \frac{5L_{g,1}^2}{\mu^2}\E\left[\|x^{k+1} - x^k\|^2\right] + 5(1-\frac{\mu \gamma_k}{2})^{2T} \gamma_k^2 S_{g,1} + + \frac{20 S_{g,1}}{\mu^2},
	\end{align*}
	where the last inequality is because
	\begin{align*}
	\E \left[\|\nabla_2 G( x^k, y^k; B_2^k ) - \nabla_2 g(x^k,y^k)\|^2\right] = \E\left[\E^k\left[\|\nabla_2 G( x^k, y^k; B_2^k ) - \nabla_2 g(x^k,y^k)\|^2\right]\right] \leq S_{g,1},
	\end{align*}
	and
	\begin{align*}
	&\E\left[\|y^k - v^{0,k}- y^*(x^k)\|^2\right] = \E\left[\|y^k - \gamma_k \nabla_2 G( x^k, y^k; B_2^k )- y^*(x^k)\|^2\right] \\
	= & \E\left[\E^k\left[\|y^k - \gamma_k \nabla_2 G( x^k, y^k; B_2^k )- y^*(x^k)\|^2\right]\right]
	\stackrel{\text{similar to } \eqref{12eq}}{\leq} (1-\mu \gamma_k) \E \left[\|y^k - y^*(x^k)\|^2\right] + \gamma_k^2 S_{g,1}.
	\end{align*}
Then we can complete the proof by combining $ x^{k+1} - x^k =-\alpha_k D_x^k $ and $ \big(\E\left[\|y^{k+1}-y^*(x^{k+1})\|\right]\big)^2 \geq \frac{1}{r} \E\left[\|y^{k+1}-y^*(x^{k+1})\|^2\right] $ from Assumption \ref{asp:moment-bound}.
\end{proof}

\begin{lemma}
	Suppose Assumption \ref{asp:base} and \ref{asp:sto} hold, if $S^{g,2}_1 \leq \frac{\mu^2}{8} $, we have
	\begin{align}
	&\E \left[\|u^{k+1}-u^*(x^{k+1})\|^2\right] \nonumber\\
	\leq & \Big(6(1-\frac{\mu\gamma_k}{2})^{T+2}+\frac{48S_1^{g,1}}{\mu^2}+12(1-\frac{\mu \gamma_k}{2})^TS_2^{g,2} \gamma_k^2 + \frac{48S_2^{g,2}}{\mu^2}\Big) \E \left[\|u^k - u^*(x^k)\|^2 \right]  \nonumber\\
	& + \frac{L_1^2}{\mu^2}\Big(3 + 6(1-\frac{\mu\gamma_k}{2})^{T+2}+\frac{48S_1^{g,1}}{\mu^2}\Big)\E \left[\|y^k - y^*(x^{k})\|^2\right]+ \frac{3L_u^2}{\mu^2}\E \left[\|x^{k+1} - x^k\|^2\right] \nonumber\\
	& + 6(1-\frac{\mu \gamma_k}{2})^T S_{f,1} \gamma_k^2 + \frac{24  S_{f,1}}{\mu^2} + \Big(12(1-\frac{\mu \gamma_k}{2})^TS_2^{g,2} \gamma_k^2 + \frac{48S_2^{g,2}}{\mu^2}\Big) r_u^2. \label{eq:u-descent}
	\end{align}
\end{lemma}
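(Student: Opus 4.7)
My approach would mirror the proof of the $u$-descent in the deterministic Lemma \ref{lem:y-and-u-descent} but must carry a squared-norm analysis throughout, since Assumption \ref{asp:moment-bound} only bounds the $y$-moments. First, apply the squared triangle inequality with intermediate points $u^*(x^k,y^k)$ and $u^*(x^k)$:
\begin{align*}
\|u^{k+1}-u^*(x^{k+1})\|^2 \leq 3\|u^{k+1}-u^*(x^k,y^k)\|^2 + 3\|u^*(x^k,y^k)-u^*(x^k)\|^2 + 3\|u^*(x^k)-u^*(x^{k+1})\|^2.
\end{align*}
The middle term is controlled by \eqref{eq:diff-u*}, producing the $3(L_1/\mu)^2\E[\|y^k-y^*(x^k)\|^2]$ contribution, and the last term by \eqref{eq:y*-and-u*}, producing $3(L_u/\mu)^2\E[\|x^{k+1}-x^k\|^2]$; these account exactly for the $3L_1^2/\mu^2$ and $3L_u^2/\mu^2$ coefficients in the claim.

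For the remaining piece $\|u^{k+1}-u^*(x^k,y^k)\|^2 = \|\tilde w - w^{T,k}\|^2$ with $\tilde w := w^*(x^k,y^k,u^k) = u^k - u^*(x^k,y^k)$, use the key identity $H\tilde w = d_u^k$ (where $H := \nabla_{22}^2 g(x^k,y^k)$) together with the inner update \eqref{eq:v-and-w-update-sto} to derive
\begin{align*}
\tilde w - w^{t+1,k} = (I-\gamma_k H)(\tilde w - w^{t,k}) + \gamma_k(H^{t,k}-H)w^{t,k} + \gamma_k(d_u^k - D_u^k),
\end{align*}
in direct analogy to the $v$-analysis in Lemma \ref{lem:v-descent-sto}. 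Taking the conditional expectation $\E^{t,k}$ of the squared norm, the two cross terms containing $(H^{t,k}-H)w^{t,k}$ vanish because $w^{t,k}$, $d_u^k-D_u^k$, and $\tilde w-w^{t,k}$ are all independent of $B_1^{t,k}$ while $\E^{t,k}[H^{t,k}-H]=0$, yielding
\begin{align*}
\E^{t,k}[\|\tilde w-w^{t+1,k}\|^2] \leq \|(I-\gamma_k H)(\tilde w-w^{t,k}) + \gamma_k(d_u^k-D_u^k)\|^2 + \gamma_k^2 S_1^{g,2}\|w^{t,k}\|^2.
\end{align*}
A Young's inequality with parameter $\delta \approx \mu\gamma_k$ on the first term, combined with $\|w^{t,k}\|^2 \leq 2\|\tilde w-w^{t,k}\|^2 + 2\|\tilde w\|^2$, produces a linear recursion in $\E[\|\tilde w-w^{t,k}\|^2]$ whose effective contraction factor is $(1-\mu\gamma_k) + 2\gamma_k^2 S_1^{g,2}$. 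Under $S_1^{g,2}\leq \mu^2/8$ and $\gamma_k\leq 1/L_{g,1}$ this is $\leq 1-\mu\gamma_k/2$, which is precisely why the stated threshold $\mu^2/8$ arises (the tighter constant vs.\ Lemma \ref{lem:v-descent-sto} comes from working in squared norm).

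Unrolling this recursion from $w^{-1,k}=0$ over $T+1$ inner steps produces a $(1-\mu\gamma_k/2)^{T+2}\E[\|\tilde w\|^2]$ leading term (the extra power over $T+1$ comes from treating the first step $w^{0,k}=\gamma_k D_u^k$ separately, which supplies an additional $(1-\mu\gamma_k)$ factor and isolates an initial noise contribution $(1-\mu\gamma_k/2)^T\gamma_k^2\E[\|D_u^k\|^2]$), a geometric $(S_1^{g,2}/\mu^2)\E[\|\tilde w\|^2]$ bias term, and a $(1/\mu^2)\E[\|d_u^k-D_u^k\|^2]$ accumulated-noise term. Finally split $\|\tilde w\|^2 \leq 2\|u^k-u^*(x^k)\|^2 + 2(L_1/\mu)^2\|y^k-y^*(x^k)\|^2$ via \eqref{eq:diff-u*}, and $\E[\|d_u^k-D_u^k\|^2] \leq 2S_2^{g,2}\E[\|u^k\|^2] + 2S_{f,1} \leq 4S_2^{g,2}\E[\|u^k-u^*(x^k)\|^2] + 4S_2^{g,2} r_u^2 + 2S_{f,1}$ using $\|u^*(x^k)\|\leq r_u$; assembling these yields the claimed inequality. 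The main obstacle is purely bookkeeping: tracking how the three variance scales $S_1^{g,2},S_2^{g,2},S_{f,1}$ interact with the nested Young's-inequality splits, the geometric-series sums $\sum_s (1-\mu\gamma_k/2)^s \leq 2/(\mu\gamma_k)$, the decomposition $\|u^k\|^2 \leq 2\|u^k-u^*(x^k)\|^2 + 2r_u^2$, and the triangle-inequality factors of $2$ and $3$ to produce the specific numerical constants $3, 6, 12, 24, 48$ appearing in the statement.
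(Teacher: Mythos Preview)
Your proposal is correct and follows essentially the same route as the paper: the three-term squared triangle inequality through $u^*(x^k,y^k)$ and $u^*(x^k)$, a Young-inequality linear recursion for $\E[\|\tilde w - w^{t,k}\|^2]$ unrolled with contraction $1-\mu\gamma_k/2$, separate handling of the first inner step $w^{0,k}=\gamma_k D_u^k$ to produce the extra power in $(1-\mu\gamma_k/2)^{T+2}$, and the final splits of $\|\tilde w\|^2$ and $\|u^k\|^2$ via \eqref{eq:diff-u*} and $\|u^*(x^k)\|\leq r_u$. The only minor difference is ordering: the paper applies Young's inequality \emph{before} conditioning, splitting off the entire noise $D_w^{t,k}-d_w^{t,k}$ at once, which yields contraction factor $1-\mu\gamma_k + 4\gamma_k S_1^{g,2}/\mu$ rather than your $1-\mu\gamma_k + 2\gamma_k^2 S_1^{g,2}$; it is the paper's version that makes the threshold $S_1^{g,2}\leq\mu^2/8$ tight, whereas your ordering would actually only require $\mu^2/4$.
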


\begin{proof}
	The iteration of $ w^{t,k} $ implies that
	\begin{align}
	& \|u^k - w^{t+1,k} - u^*(x^k,y^k)\|^2 =  (1+\mu \gamma_k)\|u^k - u^{t,k} + \gamma_k d_w^k - u^*(x^k,y^k)\|^2 + (1+\frac{1}{\mu \gamma_k})\gamma_k^2 \|D_w^{t,k} - d_w^{t,k}\|^2 \nonumber\\ 
	\stackrel{\eqref{13eq}}{\leq} & (1+\mu \gamma_k)(1-\mu \gamma_k)^2\|u^k - w^{t,k}- u^*(x^k,y^k)\|^2 + \frac{2 \gamma_k}{\mu} \|D_w^{t,k} - d_w^{t,k}\|^2. \label{14eq}
	\end{align}
	For $ \|D_w^{t,k} - d_w^{t,k}\| $ we have
	\begin{align*}
	&\E^k\left[\|D_w^{t,k} - d_w^{t,k}\|^2\right] =  \E^k\left[\E^{t,k}\left[\|D_w^{t,k} - d_w^{t,k}\|^2\right]\right]\\
	\leq & \E^k\left[\E^{t,k}\left[\|(\nabla_{22}^2 G(x^k,y^k;B_1^{t,k}) - \nabla_{22}^2 g(x^k,y^k))\|^2\right] \|w^{t,k}\|^2\right] +\E^k\left[\|-D_u^k + d_u^k\|^2\right]\\
    \leq & S_1^{g,1} \E^k\left[\|w^{t,k}\|^2\right] + \E^k\left[\| D_u^k - d_u^k\|^2\right]\\
    \leq & S_1^{g,1} \E^k\left[\|w^{t,k}\|^2\right] + \E^k\left[\| (\nabla_{22}^2G(x^k,y^k;B_1^k) - \nabla_{22}^2g(x^k,y^k))\|u^k\|^2 - (\nabla_2F(x^k,y^k;B_3^k) - \nabla_2f(x^k,y^k))\|^2\right]\\
    \leq & S_1^{g,1} \E^k\left[\|w^{t,k}\|^2\right] + 2 S_2^{g,2} \|u^k\|^2 + 2 S_{f,1}\\
    \leq & 2S_1^{g,1} \E^k\left[\|u^k - w^{t,k}- u^*(x^k,y^k)\|^2\right] + 2S_1^{g,1} \|u^k - u^*(x^k,y^k)\|^2 + 2 S_2^{g,2} \|u^k\|^2 + 2 S_{f,1}.
	\end{align*}
	Take expectation and combine \eqref{14eq},
	\begin{align*}
	&\E \left[\|u^k - w^{t+1,k} - u^*(x^k,y^k)\|^2\right] \\
	\leq & (1-\mu \gamma_k + \frac{4S_1^{g,1} \gamma_k}{\mu})\E \left[\|u^k - w^{t,k}- u^*(x^k,y^k)\|^2\right]  + \frac{4S_1^{g,1} \gamma_k}{\mu} \E \left[\|u^k  - u^*(x^k,y^k)\|^2\right] \\
	&+ \frac{4 S_2^{g,2} \gamma_k}{\mu} \E\left[\|u^k\|^2\right] + \frac{4 \gamma_k S_{f,1}}{\mu}.
	\end{align*}
	Since $ S_{g,1} \leq \frac{\mu^2}{8} $, we obtain
	\begin{align}
	&\E \left[\|u^{k+1} - u^*(x^{k},y^k)\|^2\right] =  \E \left[\|u^k - w^{T,k} - u^*(x^k,y^k)\|^2\right] \nonumber\\
	\leq & (1-\frac{\mu \gamma_k}{2})^T\E \left[\|u^k - w^{0,k}- u^*(x^k,y^k)\|^2\right] + \frac{8S_1^{g,1}}{\mu^2} \E \left[\|u^k  - u^*(x^k,y^k)\|^2\right] + \frac{8 S_2^{g,2}}{\mu^2} \E\left[\|u^k\|^2\right] + \frac{8  S_{f,1}}{\mu^2} \nonumber\\
	\leq & \Big((1-\frac{\mu\gamma_k}{2})^{T+2}+\frac{8S_1^{g,1}}{\mu^2}\Big) \E \left[\|u^k - u^*(x^k,y^k)\|^2 \right] + \Big(2(1-\frac{\mu \gamma_k}{2})^TS_2^{g,2} \gamma_k^2 + \frac{8 S_2^{g,2}}{\mu^2}\Big)\E\left[\|u^k\|^2\right] \nonumber\\
	& + 2(1-\frac{\mu \gamma_k}{2})^T S_{f,1} \gamma_k^2 + \frac{8  S_{f,1}}{\mu^2}, \label{15eq}
	\end{align}
	where the last inequlity is becuase
	\begin{align*}
	& \E \left[\|u^k - w^{0,k}- u^*(x^k,y^k)\|^2\right]  = \E\left[\E^k \left[\|u^k - \gamma_k D_u^k- u^*(x^k,y^k)\|^2\right]\right]\\
	\leq & \E\left[\|u^k - \gamma_k d_u^k- u^*(x^k,y^k)\|^2\right] + \gamma_k^2 \E\left[\E^k \left[\|D_u^k- d_u^k\|^2\right]\right]\\
	\leq & \E\left[\|[I - \gamma_k \nabla_{22}^2 g(x^k,y^k)](u^k- u^*(x^k,y^k))\|^2\right] + 2 S_2^{g,2} \gamma_k^2 \E\left[\|u^k\|^2\right] + 2 S_{f,1} \gamma_k^2\\
	\leq & (1-\mu\gamma_k)^2\E\left[\|u^k - u^*(x^k,y^k)\|^2\right] + 2 S_2^{g,2} \gamma_k^2 \E\left[\|u^k\|^2\right] + 2 S_{f,1} \gamma_k^2.
	\end{align*}
	Then by $(\sum_{i=1}^{n} a_i)^2 \leq n \sum_{i=1}^{n} a_i^2$, we have
	\begin{align*}
	&\E \left[\|u^{k+1} - u^*(x^{k+1})\|^2\right]\\
	\leq & 3 \E \left[\|u^{k+1} - u^*(x^k,y^k)\|^2\right] + 3 \E \left[\|u^*(x^k,y^k) - u^*(x^{k})\|^2\right] + 3 \E \left[\|u^*(x^{k+1}) - u^*(x^{k})\|^2\right]\\
	\stackrel{\eqref{15eq}, \eqref{eq:diff-u*},\eqref{eq:y*-and-u*}}{\leq} & \Big(3(1-\frac{\mu\gamma_k}{2})^{T+2}+\frac{24S_1^{g,1}}{\mu^2}\Big) \E \left[\|u^k - u^*(x^k,y^k)\|^2 \right] + \Big(6(1-\frac{\mu \gamma_k}{2})^TS_2^{g,2} \gamma_k^2 + \frac{24 S_2^{g,2}}{\mu^2}\Big)\E\left[\|u^k\|^2\right] \nonumber\\
	& + \frac{3L_1^2}{\mu^2}\E \left[\|y^k - y^*(x^{k})\|^2\right] + \frac{3L_u^2}{\mu^2}\E \left[\|x^{k+1} - x^k\|^2\right]+ 6(1-\frac{\mu \gamma_k}{2})^T S_{f,1} \gamma_k^2 + \frac{24  S_{f,1}}{\mu^2}\\
	\leq & \Big(6(1-\frac{\mu\gamma_k}{2})^{T+2}+\frac{48S_1^{g,1}}{\mu^2}+12(1-\frac{\mu \gamma_k}{2})^TS_2^{g,2} \gamma_k^2 + \frac{48S_2^{g,2}}{\mu^2}\Big) \E \left[\|u^k - u^*(x^k)\|^2 \right]  \nonumber\\
	& + \frac{L_1^2}{\mu^2}\Big(3 + 6(1-\frac{\mu\gamma_k}{2})^{T+2}+\frac{48S_1^{g,1}}{\mu^2}\Big)\E \left[\|y^k - y^*(x^{k})\|^2\right]+ \frac{3L_u^2 \alpha_k^2}{\mu^2}\E \left[\|D_x^k\|^2\right]\\
	& + 6(1-\frac{\mu \gamma_k}{2})^T S_{f,1} \gamma_k^2 + \frac{24  S_{f,1}}{\mu^2} + \Big(12(1-\frac{\mu \gamma_k}{2})^TS_2^{g,2} \gamma_k^2 + \frac{48S_2^{g,2}}{\mu^2}\Big) r_u^2,
	\end{align*}
where the last inequality follows from
\begin{align*}
 \E \left[\|u^k - u^*(x^k,y^k)\|^2 \right] &\leq 2\E \left[\|u^k - u^*(x^k)\|^2 \right] + 2\E \left[\|u^*(x^k,y^k) - u^*(x^k)\|^2 \right]\\
 &\leq 2\E \left[\|u^k - u^*(x^k)\|^2 \right] + \frac{2L_1^2}{\mu^2}\E \left[\|y^k - y^*(x^{k})\|^2 \right],
\end{align*}
and $
\E\left[\|u^k\|^2\right] \leq 2\E\left[\|u^k-u^*(x^k)\|^2\right] + 2 r_u^2.$
\end{proof}

Next, we prove that there exist constant bounds for both $ \E\left[\|y^k-y^*(x^k)\|^2\right] $ and $ \E\left[\|u^{k}-u^* (x^k)\|^2\right] $.
\begin{lemma}\label{lem:y-and-u-bound}
	Suppose Assumptions \ref{asp:base}, \ref{asp:sto} and \ref{asp:moment-bound} hold, choose $ T $, stepsize $ \alpha_k $ and batch sizes satisfy the conditions
	\begin{align*}
	&T \geq \frac{\max \{\ln (1/\sqrt{40r}), \ln (1/96) \}}{\ln (1-\mu \gamma_k /2)} = \Theta(\kappa),\\
	&\alpha_k \leq \bar{\alpha}:=\min\Big\{\frac{\mu}{6\sqrt{2} L_u L_{g,1}}, \frac{\mu^2}{8\sqrt{30r}L_1 L_{g,1}^2 },\frac{\mu^2}{80 r L_{g,1}L_2L_{g,2}}, \frac{L_1\mu}{6\sqrt{10r}L_{g,2} L_uL_2 }\Big\} = \Theta(\kappa^{-3}),\\
	&S_{f,1} \leq \min\big\{\frac{L_1^2 \mu^2}{375 r L_{g,2}^2}, L_2^2\big\},\ S_{g,1} \leq \frac{\mu^4}{3360 r^2 L_{g,2}^2},\\
	&S_1^{g,1} \leq  \min \big\{ \frac{\mu^2}{160r}, \frac{\mu^2}{768}\big\},\  S_2^{g,2} \leq \min \big\{\frac{L_1^2 \mu^2}{735 r L_{g,2}^2 r_u^2},\frac{\mu^2}{768}\big\},\  S_3^{g,2} \leq \min \big\{ L_{g,1}^2, \frac{L_2^2}{r_u^2}\big\},
	\end{align*}
	then we have $\E\left[\|y^k-y^*(x^k)\|^2\right] \leq \frac{\mu^2}{20 r L_{g,2}^2}$, $ \E\left[\|u^{k}-u^* (x^k)\|^2\right] \leq \frac{4 L_1^2}{5 r L_{g,2}^2} $ for all $ 0 \leq k \leq K $.
\end{lemma}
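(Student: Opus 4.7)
The plan is to prove both bounds simultaneously by induction on $k$. The base case $k=0$ is provided exactly by Lemma \ref{lem:initial-point-sto} under the BOX 2 initialization, since $\min\{\mu^2/(20rL_{g,2}^2), 1/(4L_1)\} \le \mu^2/(20rL_{g,2}^2)$ and $\min\{4L_1^2/(5rL_{g,2}^2), L_1/\mu^2\} \le 4L_1^2/(5rL_{g,2}^2)$.

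For the inductive step, suppose both bounds hold at iteration $k$. The first sub-step is to produce an explicit upper bound on $\E[\|D_x^k\|^2]$. Using $\|d_x^k\|^2\le 2L_2^2+2L_{g,1}^2\|u^k-u^*(x^k)\|^2$ from \eqref{eq:dx-bound} together with the variance bound \eqref{eq:direction-x-var}, one obtains
\begin{equation*}
\E[\|D_x^k\|^2] \le 4L_2^2+4S_{f,1}+8S_3^{g,2}r_u^2+\bigl(4L_{g,1}^2+8S_3^{g,2}\bigr)\E[\|u^k-u^*(x^k)\|^2].
\end{equation*}
The batch-size hypotheses $S_{f,1}\le L_2^2$ and $S_3^{g,2}\le\min\{L_{g,1}^2,L_2^2/r_u^2\}$ reduce this to $\E[\|D_x^k\|^2]\le c_1 L_2^2 + c_2 L_{g,1}^2\E[\|u^k-u^*(x^k)\|^2]$ for absolute constants, and the inductive $u$-bound then yields a fully explicit constant that can be substituted into the $y$ and $u$ recursions.

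Next I invoke \eqref{eq:y-descent}. The key linearization is to absorb the quadratic factor: since $\E[\|y^k-y^*(x^k)\|^2]\le \mu^2/(20rL_{g,2}^2)$, the term $\frac{5rL_{g,2}^2}{\mu^2}(\E[\|y^k-y^*(x^k)\|^2])^2$ is at most $\tfrac14\E[\|y^k-y^*(x^k)\|^2]$. The choice $T\ge \ln(1/\sqrt{40r})/\ln(1-\mu\gamma_k/2)$ forces $5r(1-\mu\gamma_k/2)^{2T+1}\le \tfrac18$, and $S_1^{g,2}\le \mu^2/(160r)$ gives $20rS_1^{g,2}/\mu^2\le \tfrac18$, so the total linear coefficient on $\E[\|y^k-y^*(x^k)\|^2]$ is at most $\tfrac12$. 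The remaining $\alpha_k^2\E[\|D_x^k\|^2]$ and $S_{g,1}$ contributions are driven below $\mu^2/(40rL_{g,2}^2)$ by the step-size and variance conditions (these are precisely what fix the thresholds on $\alpha_k$, $S_{g,1}$), so $\E[\|y^{k+1}-y^*(x^{k+1})\|^2]\le \tfrac12\cdot\tfrac{\mu^2}{20rL_{g,2}^2}+\tfrac{\mu^2}{40rL_{g,2}^2}=\tfrac{\mu^2}{20rL_{g,2}^2}$.

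The $u$-step is analogous via \eqref{eq:u-descent}. The contraction factor on $\E[\|u^k-u^*(x^k)\|^2]$ equals $6(1-\mu\gamma_k/2)^{T+2}+48S_1^{g,1}/\mu^2+12(1-\mu\gamma_k/2)^T S_2^{g,2}\gamma_k^2+48S_2^{g,2}/\mu^2$; with $T\ge \ln(1/96)/\ln(1-\mu\gamma_k/2)$ and $S_1^{g,1},S_2^{g,2}\le \mu^2/768$ this is $\le \tfrac12$. The cross term in $\E[\|y^k-y^*(x^k)\|^2]$ is handled by the just-proved $y$-bound, and the $\alpha_k^2\E[\|D_x^k\|^2]$ and variance residuals are tuned to be at most $\tfrac{2L_1^2}{5rL_{g,2}^2}$ by the calibrated choices of $\alpha_k$, $S_{f,1}$ and $S_2^{g,2}$; thus $\E[\|u^{k+1}-u^*(x^{k+1})\|^2]\le \tfrac{4L_1^2}{5rL_{g,2}^2}$, closing the induction.

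The main obstacle I anticipate is the coupled bookkeeping: the $y$ recursion depends on $\E[\|D_x^k\|^2]$ which depends on the inductive $u$-bound, while the $u$ recursion depends on the fresh $y$-bound at $k+1$. Each threshold in the lemma's statement (on $T$, $\alpha_k$, $S_{f,1}$, $S_{g,1}$, $S_1^{g,1}$, $S_2^{g,2}$, $S_3^{g,2}$) must be shown to be individually sufficient to knock its corresponding error contribution below the induction targets $\mu^2/(20rL_{g,2}^2)$ and $4L_1^2/(5rL_{g,2}^2)$. The calculation is routine once the linearization trick is in place, but the constants must be split carefully so that the sum of residuals is strictly dominated by the induction hypotheses, rather than merely comparable to them.
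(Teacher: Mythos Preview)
Your proposal is correct and mirrors the paper's proof: bound $\E[\|D_x^k\|^2]$ via \eqref{eq:dx-bound} and \eqref{eq:direction-x-var}, use the $T$ and batch-size thresholds to collapse \eqref{eq:y-descent} and \eqref{eq:u-descent} to contractions with small residuals, linearize the quadratic $y$-term using the induction hypothesis, and close by induction from the BOX~2 base case. Two small corrections: (i) since $D_x^k$ is unbiased for $d_x^k$ you have $\E[\|D_x^k\|^2]=\E[\|d_x^k\|^2]+\E[\|D_x^k-d_x^k\|^2]$, so your displayed bound is inflated by a factor of two---the paper's thresholds are calibrated tightly enough that you need the sharper version $\E[\|D_x^k\|^2]\le 8L_2^2+6L_{g,1}^2\E[\|u^k-u^*(x^k)\|^2]$ to close the induction with the stated constants; (ii) the cross term in \eqref{eq:u-descent} involves $\E[\|y^k-y^*(x^k)\|^2]$ at time $k$, not $k{+}1$, so the coupling is one-directional and you only need the inductive $y$-bound, not a ``fresh'' one.
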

\begin{proof}
	Take $ S_3^{g,2} \leq \min \big\{ L_{g,1}^2, \frac{L_2^2}{r_u^2}\big\}$, $ S_{f,1} \leq L_2^2 $, from \eqref{eq:dx-bound} and \eqref{eq:direction-x-var} we know that
	\begin{align}
	\E\left[\|D_x^k\|^2\right] \leq \E\left[\|d_x^k\|^2\right] + \E \left[\E^k\left[\|D_x^k - d_x^k\|^2\right]\right] & \leq \big(2L_{g,1}^2 + 4S^{g,2}_3\big) \|u^k - u^*(x^k)\|^2 +2L_2^2 +  2S_{f,1}+4 S^{g,2}_3 r_u^2\nonumber\\ 
	& \leq 6L_{g,1}^2\|u^k - u^*(x^k)\|^2 +8L_2^2.
	\label{eq:Dx-bound}
	\end{align}
	Take $ T \geq \frac{\max \{\ln (1/\sqrt{40r}), \ln (1/96) \}}{\ln (1-\mu \gamma_k /2)} $, $ S_1^{g,1} \leq  \min \big\{ \frac{\mu^2}{160r}, \frac{\mu^2}{768}\big\} $, $ S_2^{g,2} \leq \frac{\mu^2}{768} $, combine $ \gamma_k \leq \frac{1}{L_{f,1}} $ and $ r\geq 1 $, then \eqref{eq:y-descent} and \eqref{eq:u-descent} becomes
	\begin{align}
	&\E\left[\|y^{k+1} - y^*(x^{k+1})\|^2\right] \nonumber\\
	\leq & \frac{5rL_{g,2}^2}{\mu^2}\big(\E\left[\|y^*(x^k) - y^k\|^2\right]\big)^2 +\frac{1}{4}\E \left[\|y^k - y^*(x^k)\|^2\right] + \frac{5rL_{g,1}^2\alpha_k^2}{\mu^2}\E\left[\|D_x^k\|^2\right] + \frac{21r S_{g,1}}{\mu^2}, \label{eq:y-descent-m}
	\end{align}
	and
	\begin{align}
	&\E \left[\|u^{k+1}-u^*(x^{k+1})\|^2\right] \nonumber\\
	\leq & \frac{1}{4} \E \left[\|u^k - u^*(x^k)\|^2 \right]   + \frac{4 L_1^2}{\mu^2}\E \left[\|y^k - y^*(x^{k})\|^2\right]+ \frac{3L_u^2 \alpha_k^2}{\mu^2}\E \left[\|D_x^k\|^2\right] + \frac{25 S_{f,1}}{\mu^2} + \frac{49S_2^{g,2}}{\mu^2} r_u^2. \label{eq:u-descent-m}
	\end{align}
	Put \eqref{eq:Dx-bound} into the above inequalities, we obtain
	\begin{align}
	&\E\left[\|y^{k+1} - y^*(x^{k+1})\|^2\right]\leq  \frac{5rL_{g,2}^2}{\mu^2}\big(\E\left[\|y^*(x^k) - y^k\|^2\right]\big)^2 \nonumber\\
	& +\frac{1}{4}\E \left[\|y^k - y^*(x^k)\|^2\right]  + \frac{30rL_{g,1}^4\alpha_k^2}{\mu^2}\E\left[\|u^k - u^*(x^k)\|^2\right] + \frac{40rL_{g,1}^2L_2^2\alpha_k^2}{\mu^2} + \frac{21r S_{g,1}}{\mu^2}, \label{eq:y-descent-bound}
	\end{align}
	and
	\begin{align}
	&\E \left[\|u^{k+1}-u^*(x^{k+1})\|^2\right] \nonumber\\
	\leq & \big(\frac{1}{4} + \frac{18 L_{g,1}^2 L_u^2 \alpha_k^2}{\mu^2}\big) \E \left[\|u^k - u^*(x^k)\|^2 \right]   + \frac{4 L_1^2}{\mu^2}\E \left[\|y^k - y^*(x^{k})\|^2\right]+ \frac{24L_u^2 L_2^2 \alpha_k^2}{\mu^2} + \frac{25 S_{f,1}}{\mu^2} + \frac{49S_2^{g,2}}{\mu^2} r_u^2. \label{eq:u-descent-bound}
	\end{align}
	For $ y^0,u^0 $ outputed by BOX 2 satisfy $\E\left[\|y^0-y^*(x^0)\|^2\right] \leq \frac{\mu^2}{20 r L_{g,2}^2}$ and $ \E\left[\|u^0-u^* (x^0)\|^2\right] \leq \frac{4 L_1^2}{5 r L_{g,2}^2} $, then we complete the proof by induction: If $\E\left[\|y^k-y^*(x^k)\|^2\right] \leq \frac{\mu^2}{20 r L_{g,2}^2}$, $ \E\left[\|u^{k}-u^* (x^k)\|^2\right] \leq \frac{4 L_1^2}{5 r L_{g,2}^2} $, and
	\begin{align}
	&\alpha_k \leq \min\Big\{\frac{\mu}{6\sqrt{2} L_u L_{g,1}}, \frac{\mu^2}{8\sqrt{30r}L_1 L_{g,1}^2 },\frac{\mu^2}{80 r L_{g,1}L_2L_{g,2}}, \frac{L_1\mu}{6\sqrt{10r}L_{g,2} L_uL_2 }\Big\},\label{eq:bar-alpha}\\
	&S_{f,1} \leq \frac{L_1^2 \mu^2}{375 r L_{g,2}^2},\ S_{g,1} \leq \frac{\mu^4}{3360 r^2 L_{g,2}^2},\ S_2^{g,2} \leq \frac{L_1^2 \mu^2}{735 r L_{g,2}^2 r_u^2},
	\end{align}
	we can deduce that $\E\left[\|y^{k+1}-y^*(x^{k+1})\|^2\right] \leq \frac{\mu^2}{20 r L_{g,2}^2}$ and $ \E\left[\|u^{k+1}-u^* (x^{k+1})\|^2\right] \leq \frac{4 L_1^2}{5 r L_{g,2}^2} $ from \eqref{eq:y-descent-bound} and \eqref{eq:u-descent-bound}.
\end{proof}

Finally, we can prove Theorem \ref{thm:sto}.
\begin{theorem}[\textbf{Restatement of Theorem \ref{thm:sto}}]
Under Assumptions \ref{asp:base},\ref{asp:sto} and \ref{asp:moment-bound}, 
    choose an initial iterate $(y^0, u^0, x^0)$ in BOX 2 that satisfies $\E\left[\|y^0-y^*(x^0)\|^2\right] \leq \min \big\{\frac{\mu^2}{20 r L_{g,2}^2}, \frac{1}{4 L_1}\big\}$ and $ \E\left[\|u^0-u^* (x^0)\|^2\right] \leq \min\big\{\frac{4 L_1^2}{5 r L_{g,2}^2}, \frac{L_1}{\mu^2}\big\} $.
Then, for any constant step size $\gamma_k=\gamma\leq 1/L_{g,1}$, 
    there exists a proper constant step size $ \alpha_k = \alpha  = \Theta(\kappa^{-3})$ and $ T \geq \Theta(\kappa) $ such that NSBO-SGD, as described in Algorithm \ref{alg:NSBOSGD}, has the following properties: 
		\begin{enumerate}[label=(\alph*)]
		\item  
        Fix $ K\geq 1 $. For samples with batch sizes $ |B_1^{t,k}| \geq \Theta (\kappa^2), |B_1^k|\geq \Theta (\kappa K + \kappa^2)$, $|B_2^k| \geq \Theta (\kappa^3 K + \kappa^4)$, $|B_3^k| \geq \Theta (\kappa^{-1} K)$, $|B_4^k| \geq \Theta (\kappa^{-1} K) $, 
        it holds that $ \min_{0 \le k \le K-1} \E\left[\|\nabla \Phi(x^k)\|^2\right]  = O(\frac{\kappa^3}{K}). $
        That is, NSBO-SGD can find an $\epsilon$-optimal solution in $K=O(\kappa^3 \epsilon^{-1})$ steps.
		\item 
        The computational complexity of NSBO-SGD is: $ O(\kappa^5 \epsilon^{-2}) $ gradient complexity for $ F $, $O(\kappa^9 \epsilon^{-2}) $ gradient complexity for $ G $, $ O\big(\kappa^5 \epsilon^{-2}\big) $ Jacobian-vector product complexity, $ O(\kappa^7 \epsilon^{-2}) $ Hessian-vector product complexity.
	\end{enumerate} 
\end{theorem}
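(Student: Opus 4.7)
\textbf{Proof Proposal for Theorem \ref{thm:sto}.} The plan is to mirror the deterministic Lyapunov strategy sketched in Section~\ref{app:thmdeter}, but to track the additional stochastic noise at each step and absorb it through the batch size choices. Specifically, I would work with the stochastic Lyapunov function
\begin{equation*}
    \E[V_k] = \E\!\left[\Phi(x^k) - \Phi^* + b_y\|y^k - y^*(x^k)\|^2 + b_u\|u^k - u^*(x^k)\|^2\right],
\end{equation*}
with $b_y, b_u$ chosen polynomially in $\kappa$ (as in the deterministic proof), and establish a one-step recursion of the form $\E[V_{k+1}] - \E[V_k] \le -\tfrac{\alpha}{2}\E[\|\nabla\Phi(x^k)\|^2] - (\text{negative definite terms}) + (\text{variance terms})$.

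The first block of work is to prove three descent estimates: an upper-level descent for $\E[\Phi(x^{k+1})]$, a lower-level contraction for $\E[\|y^{k+1}-y^*(x^{k+1})\|^2]$, and a contraction for $\E[\|u^{k+1}-u^*(x^{k+1})\|^2]$. For the $x$-update this is a routine $L_\Phi$-smoothness calculation with $D_x^k$ replacing $d_x^k$, where the gap $\E[\|D_x^k - \nabla\Phi(x^k)\|^2]$ splits via Assumption~\ref{asp:sto} into $O(S_{f,1} + S_3^{g,2}r_u^2)$ plus multiples of $\E[\|y^k-y^*\|^2]$ and $\E[\|u^k-u^*\|^2]$. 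For the $y$-iterate I would expand the subroutine recursion \eqref{eq:v-and-w-update-sto} to bound $\|y^k - v^{t+1,k} - y^*(x^k)\|$ by a $(1-\mu\gamma/2)$-contraction of the previous iterate plus the Newton-step quadratic remainder $\tfrac{L_{g,2}\gamma}{2}\|y^k-y^*\|^2$ plus a noise term of order $\gamma\sqrt{S_1^{g,2}}\|y^k-y^*\|+\gamma\sqrt{S_{g,1}}$. Iterating $T$ times and then squaring produces a $\|y^k-y^*\|^4$ term; this is precisely where Assumption~\ref{asp:moment-bound} is essential: it lets me replace $(\E[\|y^k-y^*\|])^2$ by $\tfrac{1}{r}\E[\|y^k-y^*\|^2]$ and thereby close the recursion in expectation. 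The $u$-contraction is analogous but simpler since the corresponding Newton-step residual is zero; the main noise contributions are $S_1^{g,1}\|w^{t,k}\|^2$, $S_2^{g,2}\|u^k\|^2$, and $S_{f,1}$.

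The second block is an induction showing that the iterates stay in the region where the descent lemmas are valid, namely $\E[\|y^k-y^*(x^k)\|^2]\le \tfrac{\mu^2}{20rL_{g,2}^2}$ and $\E[\|u^k-u^*(x^k)\|^2]\le \tfrac{4L_1^2}{5rL_{g,2}^2}$ for all $k\le K$. Starting from the initialization in BOX~2 (guaranteed by Lemma~\ref{lem:initial-point-sto}), I would plug the contraction estimates into each other and choose $\alpha = \Theta(\kappa^{-3})$, $T = \Theta(\kappa)$, together with batch sizes $S_1^{g,2}, S_2^{g,2}, S_{g,1}, S_{f,1}$ small enough that the noise floor is below the radius of the box. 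The induction step then gives, after absorbing $\|D_x^k\|^2 \le O(\|u^k-u^*\|^2) + O(1)$, a clean per-step contraction modulo constant noise.

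Once both blocks are in place, the remaining work is to pick $b_y,b_u$ so that the $\|y^k-y^*\|^2, \|u^k-u^*\|^2$ coefficients in the Lyapunov drift are strictly negative and then telescope over $k=0,\ldots,K-1$. This yields
\begin{equation*}
    \min_{0\le k\le K-1}\E[\|\nabla\Phi(x^k)\|^2] \le \frac{2\E[V_0]}{\alpha K} + O\!\left(\tfrac{S_{f,1}+S_3^{g,2}+b_yS_{g,1}/\mu^2+b_uS_{f,1}/\mu^2+b_uS_2^{g,2}/\mu^2}{\alpha}\right).
\end{equation*}
To make the first term $O(\epsilon)$ we take $K = O(\kappa^3/\epsilon)$, and to force the residual noise floor below $\epsilon$ I would scale each batch size like $|B_i| = \Theta(\text{poly}(\kappa)/\epsilon) = \Theta(\text{poly}(\kappa) K / \kappa^3)$, matching the statement. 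The computational complexity in part (b) then follows by multiplying the $K = \Theta(\kappa^3/\epsilon)$ outer iterations (plus a constant $T$ inner-loop budget) by the respective batch sizes, and adding the one-time $\Theta(\kappa\log\kappa)$ initialization cost from Lemma~\ref{lem:initial-point-sto}. The main obstacle I anticipate is calibrating the dependence of $|B_2^k|$ on $K$ and $\kappa$: the $\nabla_2 G$ noise propagates through the squared $y$-recursion amplified by $r$ and $\kappa^2$, so bookkeeping the interplay between the moment-bound factor $r$, the inner-loop contraction rate $(1-\mu\gamma/2)^T$, and the outer step size $\alpha$ must be done carefully to obtain the claimed $\Theta(\kappa^3 K + \kappa^4)$ bound rather than a looser one.
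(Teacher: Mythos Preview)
Your proposal is correct and follows essentially the same route as the paper: the same stochastic Lyapunov function with $b_y=4L_1$, $b_u=\mu^2/(8L_1)$, the same three descent lemmas (upper-level smoothness for $x$, inexact-Newton contraction plus quadratic remainder and noise for $y$, and a pure contraction plus noise for $u$), the same use of Assumption~\ref{asp:moment-bound} to pass from the squared first moment to the second moment in the $y$-recursion, and the same induction-to-stay-in-the-box followed by telescoping and batch-size calibration. The paper's proof differs only in bookkeeping details, and your anticipated obstacle regarding $|B_2^k|$ is exactly the place where the paper also does the most careful constant tracking.
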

\begin{proof}
	Define a Lyapunov function 
	\begin{align*}
	V_k=\E\left[f (x^k, y^*(x^k))\right]-\Phi^*+b_y\E\left[\|y^k-y^* (x^k)\|^2\right] + b_u\E\left[\|u^k-u^* (x^k)\|^2\right],
	\end{align*}
	where $ b_y = 4 L_1, b_u = \frac{\mu^2}{8 L_1} $.
	If the step sizes and $ T$ satisfy the conditions in Lemma \ref{lem:y-and-u-bound}, for $\E\left[\|y^k-y^*(x^k)\|^2\right] \leq \frac{\mu^2}{20 r L_{g,2}^2}$, the inequality \eqref{eq:y-descent-m} becomes
	\begin{align}
    \E\left[\|y^{k+1} - y^*(x^{k+1})\|^2\right] 
    \leq  \frac{1}{2}\E \left[\|y^k - y^*(x^k)\|^2\right] + \frac{5rL_{g,1}^2\alpha_k^2}{\mu^2}\E\left[\|D_x^k\|^2\right] + \frac{21r S_{g,1}}{\mu^2}, \label{eq:y-descent-2}
    \end{align}
	Combining \eqref{eq:phi-descent-sto} and \eqref{eq:u-descent-m}, we have
	\begin{align}
	&V_{k+1}-V_k \leq  -\frac{\alpha_k}{2}\E\left[\|\nabla \Phi(x^k)\|^2\right]-\big(\frac{\alpha_k}{2} - \frac{L_{\Phi} \alpha_k^2}{2} - \frac{5 r b_y L_{g,1}^2\alpha_k^2  }{\mu^2} - \frac{3 b_u L_u^2 \alpha_k^2}{\mu^2}\big)\E\left[\|D_x^k\|^2\right] \nonumber\\
	& \quad - \big(\frac{b_y}{2} - 2 L_1^2 \alpha_k - \frac{4 b_u L_1^2}{\mu^2}\big)\E\left[\|y^k - y^*(x^k)\|^2\right] -\big(\frac{3b_u}{4} - 2L_{g,1}^2 \alpha_k - 4 S^{g,2}_3 \alpha_k\big) \E\left[\|u^k-u^* (x^k)\|^2\right]\nonumber\\
	& \quad + \big(2S_{f,1} + 4 S^{g,2}_3 r_u^2 \big) \alpha_k + \frac{21 r b_y S_{g,1}}{\mu^2} + \frac{25b_u S_{f,1}}{\mu^2} + \frac{49b_u S^{g,2}_2 r_u^2}{\mu^2}.\label{eq:lya-descent}
	\end{align}
	On the basis of conditions in Lemma \ref{lem:y-and-u-bound}, further add
	\begin{align}\label{eq:alpha-bound-add}
	\alpha_k \leq \min \big\{\frac{1}{4 L_{\Phi}}, \frac{\mu^2}{160 r L_1 L_{g,1}^2}, \frac{L_1}{3 L_u^2}\big\}= \Theta (\kappa^{-3}),\ S^{g,2}_3 \leq \frac{\mu^2}{128 L_1 \alpha_k},
	\end{align}
	then we can deduce that 
	coefficients in \eqref{eq:lya-descent} are all positive:
	\begin{align*}
	\frac{\alpha_k}{2} - \frac{L_{\Phi} \alpha_k^2}{2} - \frac{5r b_y L_{g,1}^2\alpha_k^2  }{\mu^2} - \frac{3 b_u L_u^2 \alpha_k^2}{\mu^2} \geq \frac{\alpha_k}{8},\ \frac{b_y}{2} -  2 L_1^2 \alpha_k - \frac{4 b_u L_1^2}{\mu^2} \geq \frac{b_y}{4},\ \frac{3b_u}{4} - 2L_{g,1}^2 \alpha_k - 4 S^{g,2}_3 \alpha_k \geq \frac{b_u}{4}.
	\end{align*}
	Take $ \alpha_k$ as constant step sizes $ \alpha$, we have
	\begin{align*}
	\frac{\alpha}{2}\E\left[\|\nabla \Phi(x^k)\|^2\right] \leq V_k - V_{k+1}  + \big(2S_{f,1} + 4 S^{g,2}_3 r_u^2 \big) \alpha + \frac{21 r b_y S_{g,1}}{\mu^2} + \frac{25b_u S_{f,1}}{\mu^2} + \frac{49b_u S^{g,2}_2 r_u^2}{\mu^2}.
	\end{align*}
	By telescoping we obtain
	\begin{align*}
	\min_{0 \le k \le K-1}  \E\left[\|\nabla \Phi(x^k)\|^2\right]  &\leq \frac{1}{K}\sum_{k=0}^{K-1} \E\left[\|\nabla \Phi(x^k)\|^2\right]\\
	 &\leq \frac{2V_0}{\alpha K} + 4S_{f,1} + 8 S^{g,2}_3 r_u^2  + \frac{42 r b_y S_{g,1}}{\mu^2 \alpha} + \frac{50b_u S_{f,1}}{\mu^2 \alpha} + \frac{98 b_u S^{g,2}_2 r_u^2}{\mu^2 \alpha},
	\end{align*}
	where $ V_0 \leq  \E\left[f (x^0, y^*(x^0))\right]-\Phi^* + 2$ because of Lemma \ref{lem:initial-point-sto}. 
	Take $S_{g,1} = O(\frac{\mu^3}{L K}), S_{f,1} =O(\frac{L}{ \mu K}), S^{g,2}_2 =O(\frac{\mu L}{K}),S^{g,2}_3 =O(\frac{L^3}{\mu K}) $, we have $ \min_{0 \le k \le K-1}  \E\left[\|\nabla \Phi(x^k)\|^2\right] = O(\frac{\kappa^3}{K})$. Combining the conditions in Lemma \ref{lem:y-and-u-bound}, we can deduce the batch sizes from \eqref{eq:def-S} as follows $ |B_1^{t,k}| \geq \Theta (\kappa^2), |B_1^k|\geq \Theta (\kappa K + \kappa^2)$, $|B_2^k| \geq \Theta (\kappa^3 K + \kappa^4)$, $|B_3^k| \geq \Theta (\kappa^{-1} K)$, $|B_4^k| \geq \Theta (\kappa^{-1} K) $.
	
	Moreover, to achieve $ \min_{0 \le k \le K-1} \E \left[\|\nabla \Phi(x^k)\|^2\right] \leq \epsilon $, we need to choose $ K\geq \Theta(\frac{\kappa^3}{\epsilon}) $, and $|B_1^{t,k}| \geq \Theta (\kappa^2), |B_1^k|\geq \Theta (\frac{\kappa^4}{\epsilon}), |B_2^k| \geq \Theta (\frac{\kappa^6}{\epsilon}), |B_3^k| \geq \Theta (\frac{\kappa^2}{\epsilon}), |B_4^k| \geq \Theta (\frac{\kappa^2}{\epsilon}) $. Then, we need the following complexities: Gradient complexity of $ F $: $ K|B_3^k| + Q_0|B'_0| = O\big(\kappa^5\epsilon^{-2}\big)$;  Gradient complexity of $ G $: $K|B_2^k|+ T_0 |B_0| = O\big(\kappa^9 \epsilon^{-2}\big)$;
	Jacobian-vector product complexity: $ K|B_4^k|= O\big(\kappa^5 \epsilon^{-2} ) $; Hessian-vector product complexity: $ KT|B_1^{t,k}| + K|B_1^k| + Q_0|B_0| = O(\kappa^7 \epsilon^{-2}) $.
\end{proof}


\end{document}